\documentclass[a4paper,12pt]{amsart}

\usepackage[english]{babel}

\usepackage[utf8]{inputenc}
\usepackage[T1]{fontenc}
\usepackage{lmodern}          
\usepackage{textcomp}
\usepackage[left=3cm, right=3cm, top=3cm]{geometry}
\usepackage{indentfirst}

\usepackage{microtype}

\usepackage{amsmath}
\usepackage{amssymb}         
\usepackage{bbold}
\usepackage{mathtools}
\usepackage{stmaryrd}        
\usepackage{mathrsfs}
\usepackage{dsfont}          
\usepackage[makeroom]{cancel}
\usepackage{thmtools}
\usepackage{graphicx}
\usepackage{tikz}
\usepackage{tikz-cd}
\usetikzlibrary{calc,cd,decorations.pathmorphing,arrows.meta,intersections,spath3,hobby}
\usepackage[all]{xy}
\usepackage{enumitem}

\usepackage[dvipsnames]{xcolor}
\usepackage{cite}
\usepackage[colorinlistoftodos,textsize=tiny]{todonotes}
\usepackage{comment}
\usepackage[normalem]{ulem}

\theoremstyle{plain}
\newtheorem{theorem}{Theorem}[section]
\newtheorem{lemma}[theorem]{Lemma}
\newtheorem*{lemma*}{Lemma}
\newtheorem{proposition}[theorem]{Proposition}

\newtheorem{corollary}[theorem]{Corollary}

\theoremstyle{definition}
\newtheorem{definition}[theorem]{Definition}
\newtheorem{definition-notation}[theorem]{Definition/Notation}
\newtheorem{lemma-definition}[theorem]{Lemma/Definition}

\theoremstyle{remark}
\newtheorem{remark}[theorem]{Remark}
\newtheorem{example}[theorem]{Example}
\newtheorem{notation}[theorem]{Notation}

\definecolor{verylightgrey}{rgb}{0.9,0.9,0.9}

\newcommand\extrafootertext[1]{%
    \bgroup
    \renewcommand\thefootnote{\fnsymbol{footnote}}%
    \renewcommand\thempfootnote{\fnsymbol{mpfootnote}}%
    \footnotetext[0]{#1}%
    \egroup
}

\renewcommand{\phi}{\varphi}

\newcommand{\restr}[2]{\left.\kern-\nulldelimiterspace #1 \right|_{#2}}

\newcommand{\C}{\mathbb{C}}

\newcommand{\Z}{\mathbb{Z}}
\newcommand{\N}{{\mathbb{Z}_{\ge0}}}

\newcommand{\dmg}{\mathcal{Q}}

\newcommand{\id}{\mathds{1}}
\newcommand{\LL}{\mathcal{L}}
\newcommand{\M}{\mathcal{M}}

\definecolor{lightgrey}{rgb}{0.7,0.7,0.7}
\definecolor{ballblue}{rgb}{0.0,0.5,1.0}
\definecolor{purple}{rgb}{0.5,0.0,0.5}
\definecolor{green}{rgb}{0.2,0.6,0.2}

\newcommand{\via}[1]{}

\newcommand{\no}[1]{}

\newcommand{\bmsp}[1][f_-]{\mathrm{BMS}^+(G,\omega;#1)}
\newcommand{\bmsm}[1][f_+]{\mathrm{BMS}^-(G,\omega;#1)}
\newcommand{\BMS}{\mathrm{BMS}(G,\omega)}

\usepackage[hidelinks]{hyperref}
\hypersetup{
    linktoc=all,
}

\numberwithin{equation}{section}
\title{A categorification of Kauffman states for planar graphs}

\author[G. Cerulli Irelli]{Giovanni Cerulli Irelli}
\address{Sapienza Università di Roma; Dipartimento di Scienze di Base e Applicate per l'Ingegneria, Via Antonio Scarpa 14, 00161 Roma, Italy}
\email{giovanni.cerulliirelli@uniroma1.it}

\author[D. Fiorenza]{Domenico Fiorenza}
\address{Sapienza Università di Roma; Dipartimento di Matematica ``Guido Castelnuovo'', P.le Aldo Moro 5, 00185 Roma, Italy}
\email{fiorenza@mat.uniroma1.it}

\author[E. Landi]{Eugenio Landi}
\address{Sapienza Università di Roma; Dipartimento di Matematica ``Guido Castelnuovo'', P.le Aldo Moro 5, 00185 Roma, Italy}
\email{eugenio.landi@uniroma1.it}

\author[M. Matteucci]{Michele Matteucci}
\address{Università di Roma Tre; Dipartimento di Matematica e Fisica, Largo San Leonardo Murialdo 1, 00146 Roma, Italy}
\email{michele.matteucci@uniroma3.it}

\date{\today}

\begin{document}
\begin{abstract}

Given a decorated planar graph $(G,\omega)$, where $G$ is a planar graph and $\omega\in H^1(|\mathcal{Q}G|,\mathbb{Z})$ with $\mathcal{Q}G$ the directed medial graph of $G$, we call some angular functions \emph{$\omega$-compatible} and study two distinct but related directed graphs: $\mathcal{L}(G,\omega)$, which is the directed graph of such functions, and $BMS(G,\omega)$, the directed graph of BMS states which are some pairs of $\omega$-compatible functions plus additional data.
We give sufficient conditions for $\mathcal{L}(G,\omega)$ to be a graded distributive lattice, recovering Kauffman's Clock Theorem when $G$ is a knot diagram.
We also define a potential on $\mathcal{Q} G$ and associate a representation of the corresponding quiver with potential to every BMS state. Under suitable assumptions, this construction yields an isomorphism between $\mathcal{L}(G,\omega)$ and the lattice of subrepresentations of a maximal representation, generalizing a result of Bazier-Matte--Schiffler.
\end{abstract}
\keywords{Graphs, Knots, Quivers, Distributive Lattices}
\subjclass[2020]{57K10, 16G20, 05C10, 06D99}
\maketitle

\tableofcontents

\section{Introduction}

Given a knot diagram $K$ with a marked edge between two consecutive crossings, one can construct a bipartite graph whose two vertex sets are given by the $n$ crossings of $K$ and the $n$ faces that are not adjacent to the marked edge. A Kauffman state is then a perfect matching of this bipartite graph. In \cite{Kauff-Book}, Kauffman proved that the set of all Kauffman states, ordered by \emph{counterclockwise moves}, is a finite distributive lattice. In the same work, he also showed that the (Conway--)Alexander polynomial can be recovered as a suitable sum indexed by the Kauffman states of $K$.

More recently, Bazier-Matte and Schiffler \cite{BM,BM2,BM3} reinterpreted these constructions in the language of quiver representations and cluster algebras, recovering the Alexander polynomial of a knot as a specialization of the $F$-polynomial of certain indecomposable modules over the Jacobian algebra of a quiver associated with $K$.

The aim of this paper is to extend both pictures from knot diagrams to planar graphs decorated with a cohomology class on their directed medial graph. To such a decorated planar graph $(G,\omega)$ we associate the set $\LL(G,\omega)$ of $\omega$-compatible angular functions and endow it with a directed graph structure via counterclockwise moves along edges of $G$. We then introduce \emph{BMS states}, namely pairs of compatible angular functions together with a dimension vector satisfying an additional compatibility condition, and use them as the bridge between the combinatorics of $\LL(G,\omega)$ and the representation theory of the directed medial graph.

Our first main result shows that nilpotency degree zero (Definition~\ref{def.nilpotency.degree}) is enough to force strong lattice-theoretic properties: if $(G,\omega)$ has nilpotency degree zero, then every connected component of $\LL(G,\omega)$ is the directed graph of a finite graded distributive lattice (Theorem~\ref{thm.connected.components}). A more concrete sufficient condition for connectedness is given in terms of the graph of invisible angular cycles; when this graph is connected, $\LL(G,\omega)$ itself is a distributive lattice (Corollary~\ref{cor-distributive-lattice}). In the case of prime link diagrams endowed with the Kauffman weight, this recovers Kauffman's Clock Theorem.

Our second main result concerns the representation-theoretic side. To every decorated planar graph we associate a quiver with potential and, to every BMS state, a representation of its Jacobian algebra. When the weight is characteristic, we prove that the lattice of plus-subobjects of a non-trivial BMS state is isomorphic to the lattice of subrepresentations of the associated module (Theorem~\ref{thm.plus-subobjects-subrepr}), thus extending the corresponding result of Bazier-Matte--Schiffler \cite[Theorem 6.9]{BM}.

The paper is organized as follows.
In Section~\ref{Sec:PlanarGraph} we introduce decorated planar graphs, angular functions, invisible cycles, and the directed graph $\LL(G,\omega)$.
In Section~\ref{sec-bms-states} we study colorings and BMS states, and prove that each connected component of $\LL(G,\omega)$ is a finite graded distributive lattice under the nilpotency degree zero assumption.
In Section~\ref{sec-kauffman-states} we apply our construction to the case of knot diagrams and recover the theory of Kauffman states and Kauffman's Clock Theorem for prime links.
In Section~\ref{sec-quiver-representations} we associate to $(G,\omega)$ a quiver with potential and define a functor $\mathcal{M}$ taking every BMS state to a representation of its Jacobian algebra.
Finally, in Section~\ref{sec-lattice-isomorphism} we prove that if $\omega$ is a characteristic weight this functor induces, for every BMS state $\xi$, an isomorphism between the lattice of its subobjects and that of the subrepresentations of $\mathcal{M}(\xi)$.

The authors are grateful to Roberto Conti, Francesco Esposito, Daniel Labardini-Fragoso, and Ralf Schiffler for the helpful conversations and suggestions.

This work was supported by the "National
Group for Algebraic and Geometric Structures, and their Applications" (GNSAGA - INdAM) and by NextGenerationEU - PRIN 2022 -B53D23009430006 - 2022S97PMY -PE1-investimento M4.C2.1.1-Structures for Quivers, Algebras and Representations (SQUARE). 

\section{Decorated Planar Graphs}\label{Sec:PlanarGraph}

Let us fix some notation: our graphs $G\coloneqq(V_G,E_G)$ will be, in general, finite, undirected, multigraphs (that is, we admit multiple edges between the same two vertices). We will assume $G$ has no loops, i.e., no edges starting and ending at the same vertex. 
We denote by $E_G(v)$ the set of edges adjacent to $v$, and by $\deg(v)$ the degree of the vertex $v$, that is, the cardinality of $E_G(v)$ counted with multiplicity. We will always assume that $\deg(v)\geq 2$ for every vertex $v$ of $G$. If $V\subseteq V_G$ is a subset of vertices, $E_G(V)$ denotes the set of edges adjacent to at least one vertex $v\in V$.

By a planar graph we mean a graph $G$ embedded in the 2-sphere $S^2$; when we say that a graph is planar, the embedding in the sphere is always understood to be part of the datum. A planar graph has a set of faces $F_G$. We denote by $E_G(f)$ the set of edges adjacent to a face $f$ and, analogously we define $E_G(F)$ when $F\subseteq F_G$. Two half-edges adjacent to a common vertex $v$ and belonging to a common face $f$ determine an angle, that we will always assume to be clockwise oriented in the standard orientation of $S^2$. We call $A_G$ the set of angles of a planar graph. The careful reader can think of angles as adjacent pairs $(v,f)$ provided they keep in mind the many things that can go wrong
(a loop on a vertex $v$ would yield two angles corresponding to the same pair $(v,f)$. From this is easy to give counterexamples even in the loop-free case.)
As above, given a vertex $v$ or a face $f$ the angles adjacent to them will be denoted by $A_G(v), A_G(f)$. Similarly, we define the sets $A_G(V)$ and $A_G(F)$ for any subsets $V\subseteq V_G$ and $F\subseteq F_G$.

We will denote by $\dmg G$ the directed medial graph of $G$.  Here we only recall the material needed for our constructions.
The graph $\dmg G$ is planar. Its vertices correspond to the edges of $G$ and its edges correspond to the angles of $G$ oriented clockwise. We write $\dmg(e)$ and $\dmg(a)$ for the vertex and edge in $\dmg G$ corresponding to the edge $e$ and the angle $a$ in $G$, respectively. For any vertex $v$ of $G$ the edges of $\dmg G$ corresponding to the angles in $A_G(v)$ form a set of edges $\dmg(v)$ of $\dmg G$ bounding clockwise a face in $\dmg G$. For any face $f$ of $G$ the edges of $\dmg G$ corresponding to the angles in $A_G(f)$ form a set of edges $\dmg(f)$ of $\dmg G$ bounding counterclockwise a face in $\dmg G$. Notice that, since we are assuming $\deg(v)\geq 2$  for any vertex $v$ of $G$ and since $G$ contains no loops, the directed graph $\dmg G$ does not contain loops.
The pictorial rule for constructing $\dmg G$ from $G$ is recalled in Figure \ref{fig-directed-medial-graph-construction}.

\begin{figure}[htp]
\centering    \includegraphics[width=236.19562pt,totalheight=65.96858pt]{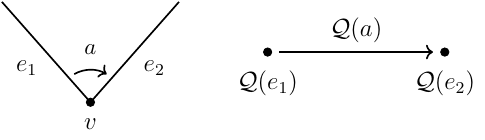}
    \caption{Construction rule of the directed medial graph.}
    \label{fig-directed-medial-graph-construction}
 \end{figure}

Notice that for any edge $e\in E_G$ there are exactly two angles in $A_G$ for which $e$ is a source and exactly two angles for which $e$ is a target. Equivalently, each vertex $\dmg(e)$ in $\dmg G$ is the source of two arrows and the target of other two arrows. In particular, each vertex in
 $\dmg G$ is quadrivalent.

\begin{remark}\label{rem.bounded}
We are not going to make explicit use of this fact, but it is worth noticing that if $G$ is a connected planar graph, then the directed graph $\dmg G$ is strongly connected, i.e., every two vertices $\dmg (e_1)$ and $\dmg (e_2)$ are connected by an oriented path.
\end{remark}

\begin{definition}\label{def.angular.function}
Let $G=(V_G,E_G)$ be a planar graph. An \emph{angular function} on $G$ is a function $g\colon A_G\to \N$.
\end{definition}
Equivalently, an angular function on $G$ is a function $g\colon E_{\dmg G}\to \N$. Let $|\dmg G|$ be the topological realization of the graph $\dmg G$. It has a natural structure of 1-dimensional CW-complex. The orientations on the edges of $\dmg G$ provide a distinguished basis of the free $\Z$-module $C_{1;CW}(|\dmg G|,\Z)$ of 1-dimensional cellular chains of $|\dmg G|$ that, by a little abuse of notation, we will denote by the same symbol $E_{\dmg G}$. Looking at an angular function on $G$ as a function from  $E_{\dmg G}$ to $\Z$, this uniquely extends to a $\Z$-linear map, that we will denote by the same symbol $g$,
\[
g\colon C_{1;CW}(|\dmg G|,\Z)\to \Z,
\]
i.e., it defines a 1-cochain $g\in C^1_{CW}(|\dmg G|,\Z)$. Notice that, since $|\dmg G|$ is 1-skeletal, we have $Z^1_{CW}(|\dmg G|,\Z)=C^1_{CW}(|\dmg G|,\Z)$, i.e., all cellular 1-cochains are 1-cocycles.
\begin{definition}\label{def.decorated-planar-graph}
Let $G=(V_G,E_G)$ be a planar graph, a \emph{weight function} on $G$ is an element $\omega$ in $H^1(|\dmg G|;\Z)$; a \emph{decorated planar graph} is a pair $(G,\omega)$. We say that an \emph{angular function} $g\colon A_G\to \N$ is $\omega$\emph{-compatible} if, when seen as an element in  $Z^1_{CW}(|\dmg G|,\Z)$, it is a representative for the cohomology class $\omega$.
The set of $\omega$-compatible angular functions on a decorated planar graph $(G,\omega)$ will be denoted by the symbol $\LL(G,\omega)$.
\end{definition}

\begin{remark}\label{rem.disjoint.union}
 If $G=G_1\sqcup G_2$, then $H^1(|\dmg G|,\Z)=H^1(|\dmg G_1|,\Z)\oplus H^1(|\dmg G_2|,\Z)$ and $(A_G)^\N=(A_{G_1})^\N\times (A_{G_2})^\N$. This implies $\LL(G,\omega)=\LL(G_1,\omega_1)\times \LL(G_2,\omega_2)$, where $(\omega_1,\omega_2)$ is the image of $\omega$ via the canonical isomorphism $H^1(|\dmg G|,\Z)=H^1(|\dmg G_1|,\Z)\oplus H^1(|\dmg G_2|,\Z)$.
\end{remark}

The $\Z$-module $Z_{1;CW}(|\dmg G|,\Z)$ of cellular 1-cycles of $|\dmg G|$ is a submodule of $C_{1;CW}(|\dmg G|,\Z)$. Since we have fixed the distinguished basis $E_{\dmg G}$ of  $C_{1;CW}(|\dmg G|,\Z)$, we can give the following definition.
\begin{definition}
A 1-cycle $c\in Z_{1;CW}(|\dmg G|,\Z)$ is \emph{non-negative} or an \emph{angular cycle in $G$} if all of its coordinates in the basis $E_{\dmg G}$ of  $C_{1;CW}(|\dmg G|,\Z)$ are non-negative. 
 \end{definition}
Concretely, this means that a non-negative cycle is an oriented cycle in the oriented graph $\dmg G$ such that all of its edges are positively oriented with respect to the orientation of $\dmg G$. Let us write
\[
\lambda_\omega\colon Z_{1;CW}(|\dmg G|,\Z)\to \Z
\]
for the integration map
\[
Z_{1;CW}(|\dmg G|,\Z)= H_1(|\dmg G|,\Z)\xrightarrow{\int_{-}\omega} \Z.
\]
If $g$ is an $\omega$-compatible angular function, for any cycle $c=\sum_{a\in A_G} c_a \dmg(a)$, we have
\begin{equation}\label{eq.cycle}
    \lambda_{\omega} (c)=\displaystyle \sum_{a \in A_G} c_a g(a).
\end{equation}
In particular, for a non-negative cycle all of the coefficients $c_a$ are non-negative and since $g(a)\in \N$, this gives $\lambda_\omega(c)\in \N$ for any non-negative cycle $c$.

\begin{remark}[Weights as functions]\label{rem.omega.as.function}
If $G$ is connected, the choice of an ``external'' face (or face ``at infinity'') for $G$ determines an external face for $\dmg G$. One then has a distinguished basis for $H_{1}(|\dmg G|,\Z)$ given by (the homology classes of) 1-cycles going around the internal faces of $|\dmg G|$. These correspond to vertices and internal faces of $G$. Since $H^1(|\dmg G|,\Z)\cong \mathrm{Hom}_\Z(H_1(|\dmg G|,\Z);\Z)$, we see that, once an external face has been chosen, the datum of a weight function on $G$ is the datum of a function $
\omega\colon V_G\sqcup F^{\mathrm{int}}_G \to \Z$.
Notice that, for an $\omega$-compatible angular function to possibly exist, the weight function $\omega\colon V_G\sqcup F^{\mathrm{int}}_G \to \Z$ must take values in $\N$. Since in what follows we will be only interested in working with $\omega$-compatible angular functions, we will usually think of a weight function on a connected planar graph $G$ as the datum of a choice of an external face of $G$ together with a function
\[
\omega\colon V_G\sqcup F_G^{\mathrm{int}} \to \N.
\]
We can also see $H_1(|\dmg G|,\Z)$ as the free $\Z$-module generated by \emph{all} the faces $\phi_i$ of $\dmg G$, modulo the relation
\[
\phi_{\mathrm{ext}}=\sum_{\phi_i\text{ int}} \phi_i,
\]
where all faces are given clockwise orientations. Recalling that the faces of $\dmg G$ corresponding to vertices in $G$ inherit the clockwise orientation from the orientation of $\dmg G$, while the faces corresponding to faces in $G$ inherit the counterclockwise orientation, and noticing that when realized in the plane by the choice of an external face, the counterclockwise orientation with respect to the external face is seen as the clockwise orientation in the plane, we see that a weight function admitting compatible angular functions is given by a function
\[
\omega\colon V_G\sqcup F_G \to \N
\]
subject to the constraint $
\omega(f_{\mathrm{ext}})=\sum_{v\in V_G} \omega(v)-\sum_{f\in F_G^{\mathrm{int}}}\omega(f)$, i.e., to the constraint
\begin{equation}\label{eq:constraint}
\sum_{v\in V_G} \omega(v)=\sum_{f\in F_G} \omega(f).
\end{equation}
Notice that \eqref{eq:constraint} is independent of the choice of the external face.
With this description, the $\omega$-compatibility of an angular function $g$ becomes purely combinatorial: $g$ is $\omega$-compatible if and only if for any vertex $v$ and every face $f$ of $G$ one has
\begin{equation}\label{eq.omega-and-g}
\omega(v)=\sum_{a\in A_G(v)}g(a); \qquad \omega(f)=\sum_{a\in A_G(f)}g(a).
\end{equation}
We give an example in  Figure \ref{fig-angular-function-example}.
\end{remark}
 \begin{figure}[htp]
\centering    \includegraphics[width=302.74991pt,totalheight=144.64923pt]{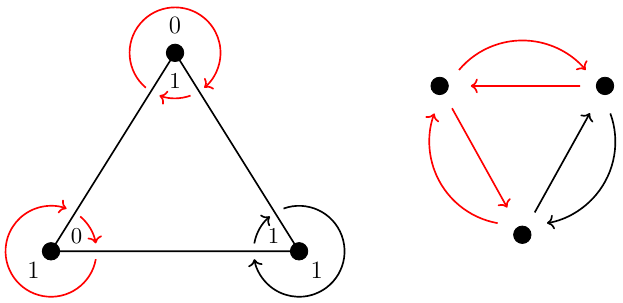}
     \caption{
     On the left: the graph $G$ together with an $\omega$–compatible function $g$, indicated by the black numbers placed near the angles.
     Here the weight function $\omega$ assigns the value $1$ to the top and to the bottom-left vertex, the value $2$ to the bottom-right vertex, and the value $2$ to both the inner and the outer face.
     The orientation of the angles is represented by arrows. On the right: the directed medial graph $\mathcal Q(G)$. The red arrows in the left figure form an angular cycle $c$ of $G$ with $\lambda_{\omega}(c)=2$, whereas the red arrows in the right figure are precisely those corresponding to $\mathcal Q(c)$.}
     \label{fig-angular-function-example}
      \end{figure}
 In view of the application to Kauffman states given in Section~\ref{sec-kauffman-states} it is convenient to give the following definition where we implicitly use Remark~\ref{rem.omega.as.function}.
 \begin{definition}\label{def.char-weight}
    Let $G$ be a connected planar graph. A weight function $\omega:V_G\sqcup F_G\rightarrow\N$ is called a \emph{characteristic weight} if $\textrm{Im}\,\omega\subseteq \{0,1\}$.
 \end{definition}
 \begin{remark}\label{rem.LG.is.finite}
     Equations \eqref{eq.omega-and-g} imply that for any angle $a$ in a connected planar graph $G$ one has $g(a)\leq \max(\omega)$, where $\omega$ is seen as a function $\omega\colon V_G\sqcup F_G^{\mathrm{int}} \to \N$. In particular, this implies that for a connected $G$  the set of $\omega$-compatible functions for a given weight $\omega$ is finite. From Remark \ref{rem.disjoint.union} we obtain that for any planar graph $G$, the set $\LL(G,\omega)$ is finite.
 \end{remark}

By the above considerations, and recalling that the set of angular functions on $G$ is the direct product of the sets of angular functions on the connected components of $G$, we have that $\lambda_\omega$ defines a function
\[
\overrightarrow{\lambda_\omega}\colon \prod_{i\in \pi_0(G)}\{\text{angular cycles in $G_i$}\}\to \mathbb{Z}_{\ge0}^{\pi_0(G)},
\]
where the $G_i$ are the connected components of $G$.
The image of the set of non-zero  angular cycles in $G$ through the map $\overrightarrow{\lambda_\omega}$ is a sublattice  of $\mathbb{Z}_{\ge0}^{\pi_0(G)}$ endowed with the pointwise order. Using this fact we can give the following definition.
\begin{definition}\label{def.nilpotency.degree}
Let $(G,\omega)$ be a decorated planar graph. The \emph{nilpotency degree} of $(G,\omega)$ is the element of $\mathbb{Z}_{\ge0}^{\pi_0(G)}$ defined by
\[
m_{\omega}:=\min \left \{ \overrightarrow{\lambda_\omega}(c) \: ; \: c \text{ a non-zero angular cycle of } G  \right \}.
\]
A non-zero angular cycle $c$ such that $\overrightarrow{\lambda_\omega}(c)=\vec{0}$ is called an \emph{invisible  angular cycle}.
\end{definition}
Clearly, the condition $m_\omega=\vec{0}$ is equivalent to the existence of invisible  angular cycles in every connected component of $G$. In what follows we will focus our attention precisely on decorated planar graphs $(G,\omega)$ with nilpotency degree zero.

\begin{remark}\label{rem.zero}
 It follows from \eqref{eq.cycle} that an $\omega$-compatible angular function $g$ must satisfy $g(a)=0$ for any angle $a$ in an invisible  angular cycle.
\end{remark}
 This leads to the following definition.
 \begin{definition}
 An \emph{invisible angular path} for $(G,\omega)$ is an oriented path in $\dmg G$ such that $g(a)=0$ for any edge $\dmg(a)$ in the path and for any $g\in \LL(G,\omega)$. An invisible angular cycle is then an invisible angular path that happens to be a cycle.
 \end{definition}

With a slight abuse of language, for an $\omega$-compatible angular function $g$ and an invisible angular path $c$ for $(G,\omega)$ we will say that $c$ is an invisible angular path for $g$.
Since the graph $G$ is finite,
we have the following.

\begin{lemma}\label{lemma.exists-e-part1}
    Let $g$ be an $\omega$-compatible angular function for $(G,\omega)$. Assume that for every edge $e$ in $E_G$ we have $g(a)=0$ for at least one of the two incoming angles at $e$ (i.e., for at least one of the two angles for which $e$ is a target). Then for any edge $e$ in $E_G$ there exists an invisible angular path containing $e$ and containing at least one angular cycle (that will then be an invisible angular cycle).
\end{lemma}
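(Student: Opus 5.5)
The plan is to build the path by walking backwards in $\dmg G$ along angles on which $g$ vanishes, and to use the finiteness of $E_G$ to force the walk to close up.

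First, set $e_0 \coloneqq e$. By hypothesis, among the two angles whose target is $e_0$ there is one, call it $a_1$, with $g(a_1)=0$. Let $e_1$ be the source of $a_1$, so that $\dmg(a_1)$ is an arrow $\dmg(e_1)\to\dmg(e_0)$ in $\dmg G$. Applying the hypothesis again to $e_1$ gives an angle $a_2$ with target $e_1$ and $g(a_2)=0$. Iterating produces an infinite sequence of edges $e_0,e_1,e_2,\dots$ and angles $a_1,a_2,\dots$ with $g(a_k)=0$ and $\dmg(a_k)\colon\dmg(e_k)\to\dmg(e_{k-1})$.

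Next, since $E_G$ is finite, there are indices $i<j$ with $e_i=e_j$; take the smallest such $j$. Read forwards, the arrows $\dmg(a_j),\dmg(a_{j-1}),\dots,\dmg(a_1)$ form an oriented path in $\dmg G$ from $\dmg(e_j)$ to $\dmg(e_0)=\dmg(e)$. This path contains $e$, and its initial segment $\dmg(a_j),\dots,\dmg(a_{i+1})$ starts and ends at $\dmg(e_i)$, so it is an oriented cycle $c$. Every arrow of the path carries the value $0$ under $g$, so the path is invisible for $g$ in the sense of the abuse of language fixed just before the statement.

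Finally, I would check that $c$ is a genuine invisible angular cycle. It is a non-zero non-negative $1$-cycle, and by \eqref{eq.cycle}, $\lambda_\omega(c)=\sum_a c_a g(a)=0$ since $g$ vanishes on every angle of $c$. Hence $c$ is an invisible angular cycle in the sense of Definition~\ref{def.nilpotency.degree}. By Remark~\ref{rem.zero}, every $g'\in\LL(G,\omega)$ then vanishes on $c$, so $c$ is invisible for all compatible functions, which justifies the parenthetical "that will then be an invisible angular cycle".

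There is no real obstacle here. The one point needing care is the directional bookkeeping: walking backwards through incoming angles is what makes the reversed sequence an oriented path in $\dmg G$ that ends at $e$ and still contains the repeated vertex.
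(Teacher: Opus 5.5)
Your proposal is correct and follows the paper's own proof essentially step for step. The paper likewise builds $e_0=e,e_{-1},e_{-2},\dots$ by repeatedly choosing an incoming angle on which $g$ vanishes, and then uses the finiteness of $G$ to force a cycle. Your added check makes explicit what the paper leaves implicit: the cycle satisfies $\lambda_\omega(c)=0$, so by Remark~\ref{rem.zero} it is invisible for every $g'\in\LL(G,\omega)$, whereas the tail from the cycle to $e$ is only guaranteed to vanish under $g$ itself.
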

\begin{proof}
  Define recursively a sequence of edges in $E_G$, indexed by nonpositive integers, as follows. We set $e_0=e$. Given the $i$-th edge $e_{-i}$ of the sequence, choose one incoming angle $a_{i}$ for $e_{-i}$ such that $g(a_i)=0$, and set $e_{-i-1}$ to be the source of $a_i$. Then for every $n\in \N$ the sequence $(e_{-n},e_{1-n},\dots, e_0)$ is an invisible angular path in $\dmg G$ containing $e$. Since $G$ is finite, for $n$ sufficiently large this path will contain a cycle.
\end{proof}

\medskip

For later use, it is convenient to introduce an auxiliary graph of invisible connected angular cycles.
\begin{definition}\label{Def:GraphInvisible}
Let $(G,\omega)$ be a decorated planar graph. The graph $\Gamma_{(G,\omega)}^{\mathrm{inv}}$ is defined as follows: the vertices of $\Gamma_{(G,\omega)}^{\mathrm{inv}}$ are the invisible connected angular cycles of $(G,\omega)$; there is an edge between $c_1$ and $c_2$ precisely when the angular cycles share a vertex $\dmg(e)$ in $\dmg G$, i.e., when they share a common edge $e$ in $G$. We call $\Gamma_{(G,\omega)}^{\mathrm{inv}}$ the graph of  invisible connected angular cycles of $(G,\omega)$.
\end{definition}

The set $\LL(G,\omega)$ is naturally the set of vertices of a directed graph, whose directed edges are associated with certain combinatorial moves, called \emph{counterclockwise moves along the edges of $G$}, that we now introduce. Recall that an $\omega$-compatible angular function $g$ is in particular a cocycle representative for the cohomology class $\omega$. For any element $\epsilon\in C^0_{CW}(|\dmg G|,\Z)$, i.e., for any function $\epsilon\colon E_G\to \Z$,  the element $g+\delta\epsilon$, where $\delta\colon C^0_{CW}(|\dmg G|,\Z)\to C^1_{CW}(|\dmg G|,\Z)$ is the cellular differential, is again a cocycle representative for $\omega$. In particular, writing $\chi_e$ for the characteristic function of the edge $e$, we have that $g+\delta\chi_e$ is a cocycle representative for $\omega$. The cocycle $g+\delta\chi_e$ is however not necessarily an angular function, since it is not necessarily non-negative on the basis elements of $C^1_{CW}(|\dmg G|,\Z)$ corresponding to angles in $G$. As we are going to show, it is however very simple to describe when this positivity condition is satisfied. The local picture of $G$ around an edge $e$ has the form shown in Figure \ref{fig-edge-adjacent-angles}.
\begin{figure}[htp]
\centering    \includegraphics[width=150.43178pt,totalheight=73.65753pt]{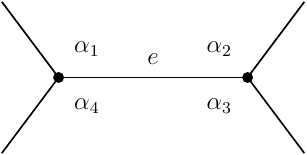}
     \caption{An edge of a planar graph and its four adjacent angles.}
     \label{fig-edge-adjacent-angles}
 \end{figure}
 It is then immediate to see that
 for any angle $\alpha$ in $G$ we have
 \[
 \delta\chi_e(\alpha)=\begin{cases}
     1 &\text{if }\alpha=\alpha_1 \text{ or }\alpha=\alpha_3,\\
     -1 &\text{if }\alpha=\alpha_2 \text{ or }\alpha=\alpha_4,\\
     0 &\text{if }\alpha\not\in \{\alpha_1,\alpha_2,\alpha_3,\alpha_4\}.
     \end{cases}
 \]
\begin{definition}
Let $e$ be an edge of $G$ and let $g$ be an $\omega$-compatible angular function on $G$. We say that $g$ is $e$-movable if $g+\delta\chi_e$ is an angular function, i.e., if it is non-negative on $A_G$. In this case, we write $\mathrm{mov}_e(g)$ for the angular function $g+\delta\chi_e$ and we say that $\mathrm{mov}_e(g)$ is obtained from $g$ through a \emph{counterclockwise move} along the edge $e$.
\end{definition}
\begin{example}
 In the generic situation depicted in Figure \ref{fig-edge-adjacent-angles}, one has that $g$ is $e$-movable if and only if $g(\alpha_2),g(\alpha_4)>0$. When this happens, $\mathrm{mov}_e(g)$ is given by
\[
 \mathrm{mov}_e(g)\colon \alpha\mapsto \begin{cases}
     g(\alpha)+1 &\text{if }\alpha=\alpha_1 \text{ or }\alpha=\alpha_3,\\
     g(\alpha)-1 &\text{if }\alpha=\alpha_2 \text{ or }\alpha=\alpha_4,\\
     g(\alpha) &\text{if }\alpha\not\in \{\alpha_1,\alpha_2,\alpha_3,\alpha_4\}.
     \end{cases}
 \]
 That is, there are two $+1$s in the values of $g$ that move counterclockwise around the two endpoints of $e$. This explains the name counterclockwise move. See Figure \ref{fig-counterclockwise-edge-move}.
\end{example}

\begin{figure}[htp]
\centering    \includegraphics[width=301.01625pt,totalheight=110.93204pt]{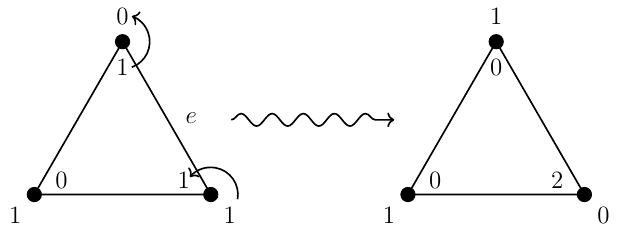}
\caption{A counterclockwise move along the edge $e$ performed on the example in Figure \ref{fig-angular-function-example}.}
     \label{fig-counterclockwise-edge-move}
 \end{figure}

 \begin{remark}
 The angles $\alpha_2,\alpha_4$ in the definition of $e$-movability are those for which the edge $e$ is a source in the directed graph $\dmg G$. Equivalently, they are the two angles that are ``first on the right, second on the left'' for an observer moving from one to the other endpoint of $e$. This is independent of the direction the observer moves along $e$.
 \end{remark}

\begin{definition-notation}\label{def.LG}
 The directed graph $\LL(G,\omega)$ is the directed graph having the set $\LL(G,\omega)$ as set of vertices, and with edges given as follows: there is a directed edge from $g_1$ to $g_2$ for each edge $e$ in $G$ such that $g_1$ is $e$-movable and  $\mathrm{mov}_e(g_1)=g_2$.
 \end{definition-notation}

 \begin{figure}[htp]
\centering    \includegraphics[width=198.20268pt,totalheight=137.38058pt]{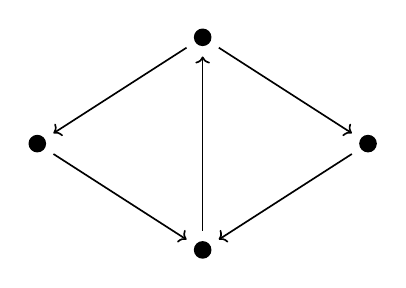}
     \caption{$\LL(G,\omega)$ in the case of Figure \ref{fig-angular-function-example}.}
     \label{fig-l-graph-example}
 \end{figure}

\begin{remark}
If $G=G_1\sqcup G_2$, then $E_G=E_{G_1}\sqcup E_{G_2}$. Via the isomorphism $\LL(G,\omega)=\LL(G_1,\omega_1)\times \LL(G_2,\omega_2)$ from Remark \ref{rem.disjoint.union}, counterclockwise moves associated with edges from $G_i$ only act on the factor $\LL(G_i,\omega_i)$, for $i=1,2$. This means that the isomorphism $\LL(G,\omega)=\LL(G_1,\omega_1)\times \LL(G_2,\omega_2)$ is actually an isomorphism of directed graphs. For this reason, we can restrict our attention to connected graphs, and in what follows we will always assume $(G,\omega)$ denotes a \emph{connected} decorated planar graph.
\end{remark}

\begin{remark}\label{rem.mov-back-and-forth}
Instead of considering $g\mapsto g+\delta\chi_e$ we could have considered $g\mapsto g-\delta\chi_e$. This leads to the notion of anti-$e$-movable angular function and to the clockwise move $\mathrm{mov}_e^{-}$. In particular, an angular function $g$ is anti-$e$-movable if and only if $g(\alpha_1),g(\alpha_3)>0$. If $g$ is $e$-movable, then $\mathrm{mov}_e(g)$ is anti-$e$-movable and $\mathrm{mov}_e^{-}(\mathrm{mov}_e(g))=g$. Similarly, if $g$ is anti-$e$-movable, then $\mathrm{mov}^{-}_e(g)$ is $e$-movable and $\mathrm{mov}_e(\mathrm{mov}^{-}_e(g))=g$.
\end{remark}

We can now state the following theorem, which we will prove in the next section.

\begin{restatable*}{theorem}{structure-of-LG}
    Let $(G,\omega)$ be a decorated planar graph.
If $m_{\omega}=\vec{0}$ then every connected component of $\LL(G,\omega)$ is a finite graded distributive lattice.
\end{restatable*}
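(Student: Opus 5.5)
The plan is to follow the standard "potential function" approach to lattices generated by local moves, as in Propp's theorem on lattice structures from planar graphs. Two functions $g,g'$ in the same connected component of $\LL(G,\omega)$ differ by a coboundary: $g'=g+\delta\epsilon$ for some $\epsilon\colon E_G\to\Z$. Since $G$ is connected, $\dmg G$ is connected, so $\epsilon$ is determined up to an additive constant.

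\textbf{Step 1: normalize $\epsilon$.} The hypothesis $m_\omega=\vec 0$ gives an invisible angular cycle $c_0$. Every $g\in\LL(G,\omega)$ vanishes on the angles of $c_0$ (Remark~\ref{rem.zero}). Hence $\delta\epsilon$ vanishes along $c_0$, so $\epsilon$ is constant on the vertices $\dmg(e)$ lying on $c_0$. Fix a base point $g_0$ of the component and an edge $e_0$ on $c_0$. Then each $g$ in the component determines a unique height function $h_g\colon E_G\to\Z$ with $g=g_0+\delta h_g$ and $h_g(e_0)=0$. A move $\mathrm{mov}_e$ adds $\chi_e$ to $h$, and $e$ can never be $e_0$ or any edge on $c_0$. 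Define $g\le g'$ if and only if $h_g\le h_{g'}$ pointwise, and set the rank to $\sum_e h_g(e)$; each move raises the rank by one. Finiteness of the component follows from Remark~\ref{rem.LG.is.finite}.

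\textbf{Step 2: the order is generated by moves.} I would show that $g\le g'$ in the component if and only if there is a directed path of moves from $g$ to $g'$. This yields gradedness and identifies the Hasse diagram with the directed graph. The key lemma is: if $h_g\le h_{g'}$ with $h_g\ne h_{g'}$, then some edge $e$ with $h_g(e)<h_{g'}(e)$ is $e$-movable for $g$, and $\mathrm{mov}_e(g)$ still satisfies $h\le h_{g'}$.

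To find such an $e$, take $e$ maximizing $h_{g'}-h_g$ among edges where this difference is positive. Suppose an outgoing angle $a\colon e\to e'$ had $g(a)=0$. From
\[
0\le g'(a)=g(a)+h_{g'}(e')-h_{g'}(e)-(h_g(e')-h_g(e))
\]
we get $d(e')\ge d(e)$, where $d=h_{g'}-h_g$. So the zero-angle successors stay inside the maximum set. Following such angles, as in the proof of Lemma~\ref{lemma.exists-e-part1} but forward, produces an invisible-type path that stays in the maximum set and must eventually close a cycle $C$. All of $g$ vanishes on $C$, and $d$ is constant and positive along $C$.

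The main obstacle is deriving a contradiction from $C$. I would try to show that such a cycle with $g\equiv0$ on it forces $h$ to be constant along $C$ for every element of the component. The same argument must then connect $C$ to $e_0$ through chains of shared vertices (in the spirit of $\Gamma^{\mathrm{inv}}$), giving $d=0$ on $C$, a contradiction. If this fails in general, I would refine the choice of $e$: pick, among maximal-$d$ edges, one minimal with respect to the zero-angle successor relation. Alternatively, I would use Lemma~\ref{lemma.exists-e-part1} in reverse to exhibit a movable edge.

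\textbf{Step 3: distributivity.} Given $g,g'$ in the component, define $h_\wedge=\min(h_g,h_{g'})$ and $h_\vee=\max(h_g,h_{g'})$. I would check that $g_0+\delta h_\wedge$ is non-negative, using the standard fact that if $g_0+\delta h$ and $g_0+\delta h'$ are $\ge0$ then so are those of the pointwise min and max. On an angle $a\colon e\to e'$, the value of $g_0+\delta\min$ equals $g_0(a)+\min(h(e'),h'(e'))-\min(h(e),h'(e))$, which is at least the corresponding value for whichever of $h,h'$ attains the second min. The resulting function is $\omega$-compatible, since it is cohomologous to $g_0$.

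It remains to see that $h_\wedge$ lies in the same component. This follows from Step 2 applied to $h_\wedge\le h_g$: since Step 2 is a statement about $\LL(G,\omega)$ with the normalization at $e_0$, I would prove it for arbitrary $\omega$-compatible functions cohomologous via an $\epsilon$ vanishing at $e_0$. Hence meet and join are pointwise min and max of height functions, and the component is a sublattice of $\Z^{E_G}$, which is distributive. Together with the rank function from Step 1, this makes every connected component of $\LL(G,\omega)$ a finite graded distributive lattice.
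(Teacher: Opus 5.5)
Your overall architecture (height functions, pointwise order, a reachability lemma via a maximal-difference edge, closure under pointwise $\min/\max$) is sound. The local part of Step~2 is also correct: following zero outgoing angles inside the set where $d=h_{g'}-h_g$ is maximal yields either a movable edge or a cycle $C$ on which $g$ vanishes. Such a $C$ is invisible, with $d$ constant and positive on it. But the contradiction needed at that point is exactly what you leave open, and your suggested routes cannot supply it.

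Knowing $d(e_0)=0$ at one edge of one invisible cycle says nothing about the value of $d$ on another invisible cycle $C$. Linking $C$ to $e_0$ through $\Gamma^{\mathrm{inv}}_{(G,\omega)}$ requires that graph to be connected, which is the extra hypothesis of Theorem~\ref{thm-l-graph-connectedness}, not of this statement.

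Worse, the strengthened Step~2 you invoke in Step~3 is false. That version is for arbitrary $\omega$-compatible functions differing by $\delta\epsilon$ with $\epsilon\ge 0$ and $\epsilon(e_0)=0$. Take the Hopf link example of Figure~\ref{fig-hopf-link-example}. The weight-zero faces $f_2,f_4$ give invisible cycles $\dmg(f_2),\dmg(f_4)$ that share no edge and together contain all four edges. So no edge is ever movable, and $\LL(G,\omega)=\{g_1,g_2\}$ has no arrows. Yet, normalizing at an edge of $f_2$, we have $g_2-g_1=\delta\epsilon$ with $\epsilon=0$ on the edges of $f_2$ and $\epsilon$ equal to a nonzero constant on those of $f_4$. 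After relabeling, $h_{g_1}\le h_{g_2}$ with $h_{g_1}\neq h_{g_2}$, but there is no directed path. Since no edge is movable there, no refinement of the choice of $e$ can help: the hypothesis on $d$ must change.

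The missing idea is that an edge lying on an invisible angular cycle is never movable or anti-movable. The angles of the cycle entering and leaving it vanish for every element of $\LL(G,\omega)$. Hence, for $g,g'$ in the same component, $h_{g'}-h_g$ is a signed sum of $\chi_e$ over moved edges, and it vanishes on every edge of every invisible cycle, not only on $c_0$. This property passes to $h_g-\min(h_g,h_{g'})=\max(0,h_g-h_{g'})$ and to $\max(h_g,h_{g'})-h_g$.

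So you should state Step~2 as follows: for $k,k'\in\LL(G,\omega)$ with $k'=k+\delta d$, $d\ge 0$, and $d$ vanishing on all edges of invisible cycles, there is a directed path from $k$ to $k'$. You do not need to know in advance that $k,k'$ lie in the same component. With this hypothesis the cycle $C$ carries the positive maximal value of $d$ while $d$ must vanish on it, which is the contradiction. The move $\mathrm{mov}_e$ preserves the hypothesis, so induction on $\sum_e d(e)$ goes through.

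This hypothesis is precisely the paper's BMS condition $d_{\mathrm{inv}}=0$, and the repaired Step~2 is Proposition~\ref{prop.weak-is-strong}. With it, your Steps~2 and~3 become Lemma~\ref{lem.dim-implies-map} and Theorem~\ref{thm.bms-lattice}. Your use of $\Z$-valued heights relative to a base point then lets you bypass the paper's construction of the minimum via $d_{\max,h}$ in Theorem~\ref{thm.connected.components}.
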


\section{Colorings and BMS States}\label{sec-bms-states}
\begin{definition}
    A \emph{dimension vector} on $G$ is a function $d\colon E_G\to \N$.
\end{definition}
We can see a dimension vector $d$ as an element in $C^0_{CW}(|\dmg G|,\Z)$, that we will denote by the same symbol.
\begin{definition}\label{def.colorings}
    Let $(G,\omega)$ be a decorated planar graph and let $f_{+}$ and $f_{-}$ be two $\omega$-compatible angular functions . We call the pair $(f_+,f_-)$ a \emph{coloring} of the decorated graph $(G,\omega)$; $f_+$ is the \emph{plus-function}, whereas $f_-$ is the \emph{minus-function} of the coloring.
\end{definition}
\begin{definition}
Let $(G,\omega)$ be a decorated planar graph. A \emph{weak BMS $\omega$-state}\footnote{After Bazier-Matte--Schiffler.} on $(G,\omega)$ is a triple $(f_+,f_-,d)$, where $(f_+,f_-)$ is a coloring of $(G,\omega)$ and $d$ is a dimension vector on $G$ such that
\begin{equation}\label{eq.compatible}
f_+=f_-+\delta d.
\end{equation}
\end{definition}
\begin{remark}
Equation \eqref{eq.compatible} means that for every angle $a$ in $G$ going from the edge $e_1$ to the edge $e_2$, we have
\begin{equation}\label{eq.bms-state}
        d(e_2) = d(e_1) + f_{+}(a) - f_{-}(a)
    \end{equation}
\end{remark}
In Figure \ref{fig-bms-state-example} we present an example of a BMS state: in each angle $a$ is shown the pair $(f_+(a), f_-(a))$
and near each edge $e$ the value $d(e)$ is specified.

\begin{figure}[htp]
    \centering    \includegraphics[width=191.86992pt,totalheight=147.07463pt]{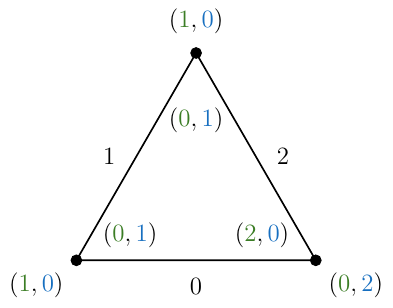}
     \caption{A BMS $\omega$-state on the triangle, where $\omega$ is the weight in the example of Figure \ref{fig-angular-function-example}.}
    \label{fig-bms-state-example}
 \end{figure}

  We have the following refinement of Lemma \ref{lemma.exists-e-part1}.
\begin{lemma}\label{lemma.exists-e-part2}
    Let $(f_+,f_-,d)$ be a weak BMS state for $(G,\omega)$. Assume that for every edge $e$ in $E_G$ with $d(e)>0$ we have $f_+(a)=0$ for at least one of the two incoming angles at $e$. Then for any edge $e$ in $E_G$ with $d(e)>0$ there exists an invisible angular path for $f_+$ containing $e$ and containing at least one (necessarily invisible) angular cycle, such that $d(e')>0$ for every edge $e'$ of the path.
\end{lemma}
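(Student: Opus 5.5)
Mimic the backward-walk construction of Lemma~\ref{lemma.exists-e-part1}, while using equation~\eqref{eq.bms-state} to keep the dimension vector positive along the path. Start with $e_0=e$, where $d(e)>0$. Suppose $e_{-i}$ has been constructed with $d(e_{-i})>0$. By hypothesis, $e_{-i}$ has an incoming angle $a_i$ with $f_+(a_i)=0$. Let $e_{-i-1}$ be the source of $a_i$.

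The key step is to show $d(e_{-i-1})>0$. Apply \eqref{eq.bms-state} to the angle $a_i$, which goes from $e_{-i-1}$ to $e_{-i}$:
\[
d(e_{-i})=d(e_{-i-1})+f_+(a_i)-f_-(a_i)=d(e_{-i-1})-f_-(a_i).
\]
Since $f_-(a_i)\ge 0$, this gives $d(e_{-i-1})=d(e_{-i})+f_-(a_i)\ge d(e_{-i})>0$. Thus positivity of $d$ propagates backwards along the walk, so the hypothesis can be applied again at $e_{-i-1}$. The recursion therefore never stops, and it produces an infinite backward sequence $\dots,e_{-2},e_{-1},e_0$. All its edges have $d>0$, and all its connecting angles have $f_+=0$.

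Since $E_G$ is finite, some edge repeats. Hence for $n$ large the path $(e_{-n},\dots,e_0)$ in $\dmg G$ contains a closed oriented sub-path, i.e.\ a non-zero angular cycle $c$ all of whose angles satisfy $f_+(a)=0$. By \eqref{eq.cycle} applied to the $\omega$-compatible function $f_+$, we get $\lambda_\omega(c)=\sum c_a f_+(a)=0$, so $c$ is invisible. By Remark~\ref{rem.zero}, every $g\in\LL(G,\omega)$ then vanishes on the angles of $c$.

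The main obstacle is the wording ``invisible angular path for $f_+$'': the angles on the tail between the cycle and $e$ are only known to satisfy $f_+=0$, not $g=0$ for every $g$. I would read the statement, as the phrase ``for $f_+$'' suggests, as requiring vanishing of $f_+$ along the path; this is exactly what the construction gives. The cycle itself is genuinely invisible by the computation above, which matches the parenthetical ``necessarily invisible''. With that reading the proof is complete.
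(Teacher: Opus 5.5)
Your proof is correct and is essentially the paper's argument: the backward walk of Lemma~\ref{lemma.exists-e-part1} with $g=f_+$, combined with the inductive estimate $d(e_{-i-1})=d(e_{-i})+f_-(a_i)\geq d(e_{-i})>0$ from \eqref{eq.bms-state}, which you rightly interleave with the construction, since the hypothesis only applies at edges with $d>0$. Your caveat about the tail is also fair: the paper's argument likewise only shows that $f_+$ vanishes on the tail, whereas the cycle itself is genuinely invisible, and the cycle is all that Proposition~\ref{prop.weak-is-strong} uses.
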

\begin{proof}
Reason as in the proof of Lemma \ref{lemma.exists-e-part1}, taking $g=f_+$. Then the only thing left to be shown is that for all the edges $e_{-i}$ in the sequence $(e_{-i})$ we have $d(e_{-i})>0$. This is easily seen inductively: $d(e_0)>0$ by assumption, and, since the angle $a_i$ goes from $e_{-i-1}$ to $e_{-i}$,
\[
d(e_{-i-1})=d(e_{-i})-f_+(a_i)+f_-(a_i)=d(e_{-i})+f_-(a_i)\geq d(e_{-i})>0.
\]
\end{proof}
 \begin{remark}\label{rem.constant.on.edges}
It follows from Remark \ref{rem.zero} that if $c$ is a connected invisible angular cycle for $(G,\omega)$, then $d$ must assign the same value to each edge in $G$ belonging to $c$. Indeed, we will have $f_+(a)=f_-(a)=0$ for any angle in $c$. The conclusion then follows from \eqref{eq.bms-state}. The function $d$ therefore induces a function on the set of vertices of the graph $\Gamma_{(G,\omega)}^{\mathrm {inv}}$ of invisible connected angular cycles of $(G,\omega)$, that is clearly constant on the connected components, and so it induces a function
\[
d_{\mathrm{inv}}\colon \pi_0(\Gamma_{(G,\omega)}^{\mathrm {inv}})\to \N.
\]
\end{remark}
\begin{definition}
     A BMS state for $(G,\omega)$ is a weak BMS state $(f_+,f_-,d)$ such that $d_{\mathrm{inv}}=0$.
     We denote the set of all BMS states for $(G,\omega)$ by the symbol $\BMS$.
\end{definition}
\begin{proposition}\label{prop.weak-is-strong}
    Let $(f_+,f_-,d)$ be a BMS state for $(G,\omega)$. Then there exists an oriented path in $\LL(G,\omega)$ from $f_-$ to $f_+$.
\end{proposition}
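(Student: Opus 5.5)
Induction on $|d|\coloneqq\sum_{e\in E_G} d(e)$. If $|d|=0$ then $d=0$, so $\delta d=0$ and \eqref{eq.compatible} gives $f_+=f_-$; the empty path works. If $|d|>0$, we will find an edge $e$ with $d(e)>0$ such that $f_+$ is anti-$e$-movable. Then we replace $(f_+,f_-,d)$ by $(\mathrm{mov}^-_e(f_+),f_-,d-\chi_e)$. This triple is again a weak BMS state, because
\[
\mathrm{mov}^-_e(f_+)=f_+-\delta\chi_e=f_-+\delta(d-\chi_e),
\]
and $d-\chi_e$ is still a non-negative dimension vector. It is a genuine BMS state: every value of $d-\chi_e$ is at most the corresponding value of $d$, and $d_{\mathrm{inv}}=0$, so $(d-\chi_e)_{\mathrm{inv}}=0$ as well. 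Its size has dropped by one. By induction there is an oriented path in $\LL(G,\omega)$ from $f_-$ to $\mathrm{mov}^-_e(f_+)$. By Remark~\ref{rem.mov-back-and-forth}, $\mathrm{mov}_e(\mathrm{mov}^-_e(f_+))=f_+$, which supplies one more directed edge, so we obtain a path from $f_-$ to $f_+$.

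The key step is finding such an edge, and this is the main obstacle. Anti-$e$-movability of $f_+$ means $f_+(\alpha_1),f_+(\alpha_3)>0$, where $\alpha_1,\alpha_3$ are the two incoming angles at $e$ (the angles for which $e$ is a target, where $\delta\chi_e=+1$). Suppose, for contradiction, that every edge $e$ with $d(e)>0$ has $f_+(a)=0$ for at least one incoming angle $a$. This is exactly the hypothesis of Lemma~\ref{lemma.exists-e-part2}. The lemma then gives, for any $e$ with $d(e)>0$ (such an $e$ exists since $|d|>0$), an invisible angular path through edges with $d>0$ that contains an invisible angular cycle $c$.

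We next pass to a connected invisible cycle. The cycle extracted in the proof of the lemma is a simple directed cycle in $\dmg G$, hence a connected invisible angular cycle, i.e.\ a vertex of $\Gamma^{\mathrm{inv}}_{(G,\omega)}$. All of its edges $e'$ satisfy $d(e')>0$. By Remark~\ref{rem.constant.on.edges}, $d$ is constant on $c$, so the value of $d_{\mathrm{inv}}$ on the component of $c$ is positive. This contradicts $d_{\mathrm{inv}}=0$. Hence some edge $e$ with $d(e)>0$ has both incoming angles with $f_+>0$, which is exactly what we need.

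One minor point needs care: the case where the two incoming angles at $e$ coincide, or otherwise degenerate, so that the generic description $\alpha_1\neq\alpha_3$ fails. Since $G$ has no loops and $\dmg G$ has no loops, the four angles around $e$ behave as in Figure~\ref{fig-edge-adjacent-angles}. In any case, anti-$e$-movability is just the non-negativity of $f_+-\delta\chi_e$, and the same contradiction argument applies to whichever incoming angle has $f_+=0$.
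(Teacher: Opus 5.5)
Your proposal is correct and follows essentially the same argument as the paper: induction on the total dimension, using Lemma~\ref{lemma.exists-e-part2} and the condition $d_{\mathrm{inv}}=0$ to find an edge $e$ with $d(e)>0$ at which $f_+$ is anti-$e$-movable, then passing to the BMS state $(\mathrm{mov}^-_e(f_+),f_-,d-\chi_e)$ and closing the path with $\mathrm{mov}_e$. Your extra discussion of degenerate incoming angles is unnecessary. The two incoming angles at $e$ sit at the two distinct endpoints of $e$, so they are automatically distinct.
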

\begin{proof}
 We argue by induction on the total dimension $d_{\mathrm{tot}}=\sum_{e\in E_G}d(e)$. If $d_{\mathrm{tot}}=0$ then \eqref{eq.bms-state} gives $f_+=f_-$ and there is nothing to prove. Assume now the statement is true up to $d_{\mathrm{tot}}=n$ and let us prove it for $d_{\mathrm{tot}}=n+1$. There must be at least one edge $e$ in $E_G$ with $d(e)>0$ and such that $f_+(a)>0$ for both incoming angles at $e$. Indeed, if this were not the case, we would be in the hypotheses of Lemma \ref{lemma.exists-e-part2} and there would be an invisible angular cycle $c$ for $(G,\omega)$ with $d_{\mathrm{inv}}(c)>0$, against the assumption that $(f_+,f_-,d)$ is a BMS state. We can then perform the clockwise move at $e$ on $f_+$. Let us set
 \[
 \mathrm{mov}_e^{-}(d)=d-\chi_e.
 \]
 Since $(\mathrm{mov}_e^{-}(d))(e')=d(e')$ for any $e'\neq e$ and $(\mathrm{mov}_e^{-}(d))(e')=d(e)-1\geq 0$, we have that $\mathrm{mov}_e^{-}(d)$ is a dimension vector on $G$. Moreover we have
 \[
 \mathrm{mov}_e^{-}(f_+)=f_+-\delta\chi_e=f_- + \delta(d-\chi_e)=f_-+\delta\mathrm{mov}_e^{-}(d).
 \]
 Hence $(\mathrm{mov}_e^{-}(f_+),f_-,\mathrm{mov}_e^{-}(d))$ is a weak BMS state for $(G,\omega)$. For any invisible angular cycle $c$ of $(G,\omega)$ we have
\[
 0\leq (\mathrm{mov}_e^{-}(d))_{\mathrm{inv}}(c)\leq d_{\mathrm{inv}}(c)=0,
\]
and so $(\mathrm{mov}_e^{-}(f_+),f_-,\mathrm{mov}_e^{-}(d))$ is a BMS state. Finally,
\[
\mathrm{mov}_e^{-}(d)_{\mathrm{tot}}=d_{\mathrm{tot}}-1=n.
\]
Then the induction hypothesis applies and we have an oriented path in $\LL(G,\omega)$ from $f_-$ to $\mathrm{mov}_e^{-}(f_+)$. Since, as noticed in Remark \ref{rem.mov-back-and-forth}, we have $\mathrm{mov}_e(\mathrm{mov}_e^{-}(f_+))=f_+$, we have an oriented path in $\LL(G,\omega)$ from $\mathrm{mov}_e^{-}(f_+)$ to $f_+$. This concludes the proof.
\end{proof}

The proof of Proposition \ref{prop.weak-is-strong} shows we can naturally lift the directed graph structure on $\LL(G,\omega)$ to a directed graph structure on $\BMS$.
\begin{definition}\label{def.moc.bms}
Let $e$ be an edge of $G$ and let $(f_+,f_-,d)$ be a BMS state on $(G,\omega)$. We say that $(f_+,f_-,d)$ is $e$-movable if the angular function $f_+$ is $e$-movable. In this case we write $\mathrm{mov}_e(f_+,f_-,d)$ for the triple
\[
(\mathrm{mov}_e(f_+),f_-,\mathrm{mov}_e(d))=(f_++\delta\chi_e,f_-,d+\chi_e).
\]
\end{definition}
\begin{lemma}\label{lemma.mov.mov}
Assume $(f_+,f_-,d)$ is an $e$-movable BMS state for $(G,\omega)$. Then $\mathrm{mov}_e(f_+,f_-,d)$ is a BMS state for $(G,\omega)$.
\end{lemma}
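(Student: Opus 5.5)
We verify the defining conditions of a BMS state one at a time for the triple $(f_+',f_-,d')$, where $f_+'=\mathrm{mov}_e(f_+)=f_++\delta\chi_e$ and $d'=d+\chi_e$. The strategy mirrors the computation in the proof of Proposition~\ref{prop.weak-is-strong}, run in the opposite direction.

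First, $f_+'$ is an angular function by the definition of $e$-movability. It is $\omega$-compatible because it differs from $f_+$ by the coboundary $\delta\chi_e$, so it represents the same cohomology class. Hence $(f_+',f_-)$ is a coloring. Next, $d'=d+\chi_e$ is non-negative since $d$ is, so it is a dimension vector. Finally, the compatibility relation \eqref{eq.compatible} holds by linearity of $\delta$:
\[
f_+'=f_++\delta\chi_e=f_-+\delta d+\delta\chi_e=f_-+\delta d'.
\]
Thus $\mathrm{mov}_e(f_+,f_-,d)$ is a weak BMS state.

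The only non-formal point is the condition $d'_{\mathrm{inv}}=0$, since $d'$ now exceeds $d$ at $e$. It suffices to show that $e$ lies on no invisible connected angular cycle; then $d'$ agrees with $d$ on every edge of every invisible cycle, and so $d'_{\mathrm{inv}}=d_{\mathrm{inv}}=0$.

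Suppose $e$ lies on an invisible angular cycle $c$. Then $c$ passes through the vertex $\dmg(e)$ of $\dmg G$, so it uses some outgoing arrow of $\dmg(e)$. The outgoing arrows of $\dmg(e)$ are exactly the two angles $\alpha_2,\alpha_4$, namely those for which $e$ is the source. By Remark~\ref{rem.zero}, every $\omega$-compatible function vanishes on the angles of $c$, so in particular $f_+(\alpha_2)=0$ or $f_+(\alpha_4)=0$. This contradicts $e$-movability, which requires $f_+(\alpha_2),f_+(\alpha_4)>0$.

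The main care needed is in this last step: one must match the angles on which $e$-movability is tested with the outgoing arrows at $\dmg(e)$. The remark following the example identifies them as the angles for which $e$ is a source, and any oriented cycle through $\dmg(e)$ must leave $\dmg(e)$ along one of these.
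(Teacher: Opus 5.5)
Your proof is correct. The weak-BMS verification is the same as the paper's, but you establish $d'_{\mathrm{inv}}=0$ by a different argument.

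\textbf{Your route.} You show directly that $\dmg(e)$ lies on no invisible cycle. Any cycle through $\dmg(e)$ must leave along $\alpha_2$ or $\alpha_4$. By $e$-movability $f_+$ is positive on both, which contradicts Remark~\ref{rem.zero}. Hence $d'=d$ on every invisible cycle.

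\textbf{The paper's route.} The paper never locates $e$. It argues as follows:
\begin{itemize}
\item Since $\mathrm{mov}_e(f_+,f_-,d)$ is already a weak BMS state, $d'$ is constant along any invisible cycle $c$ (Remark~\ref{rem.constant.on.edges}). This is where $e$-movability enters, through $f'_+$ being a genuine non-negative angular function.
\item Because $\dmg G$ has no loops, $c$ visits some $\dmg(e_1)$ with $e_1\neq e$.
\item At $e_1$ we have $d'(e_1)=d(e_1)=0$, so $d'$ vanishes on $c$.
\end{itemize}

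\textbf{Comparison.} Your argument is more explicit about why movability and invisibility are incompatible, and it does not need the absence of loops. The paper's argument needs no bookkeeping of which arrows at $\dmg(e)$ are outgoing. It obtains your intermediate fact only afterwards, in the remark following the lemma: if $e$ lay on $c$, constancy would force $d(e)+1=0$. Both proofs are equally short.
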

\begin{proof}
Since $f_+$ is $e$-movable, the pair $(\mathrm{mov}_e(f_+),f_-)$ is a coloring of $(G,\omega)$. Moreover, we have
\[
\mathrm{mov}_e(f_+)=f_++\delta\chi_e=f_-+\delta(d+\chi_e)=f_-+\delta\mathrm{mov}_e(d),
\]
so $\mathrm{mov}_e(f_+,f_-,d)$ is a weak BMS state. Let now $c$ be an invisible angular cycle for $(G,\omega)$. The dimension vector $\mathrm{mov}_e(d)$ is constant on the edges of $c$ by Remark \ref{rem.constant.on.edges}. Since the values of $\mathrm{mov}_e(d)$ coincide with those of $d$ except at the edge $e$, and each angular cycle must contain at least two distinct vertices $\dmg(e_1)$ and $\dmg(e_2)$ since $\dmg G$ has no loops, we see that $(\mathrm{mov}_e(d))(c)=0$. Hence $\mathrm{mov}_e(f_+,f_-,d)$ is a BMS state.
\end{proof}
\begin{remark}
From the proof of Lemma \ref{lemma.mov.mov} we see in particular that if an edge $e$ belongs to an invisible cycle, then $f_+$ (and so any BMS state $(f_+,f_-,d)$) is not $e$-movable.
\end{remark}

\begin{definition-notation}\label{def.bms2}
 The directed graph $\BMS$ is the directed graph having the set $\BMS$ as set of vertices, and with edges given as follows: there is a directed edge from $(f^1_+,f^1_-,d_1)$ to $(f^2_+,f^2_-,d_2)$ for each edge $e$ in $G$ such that $f^1_+$ is $e$-movable and  $\mathrm{mov}_e(f^1_+,f^1_-,d_1)=(f^2_+,f^2_-,d_2)$.
 \end{definition-notation}
 
 \begin{lemma}\label{lem.forget.is.morphism}
 The forgetful map
 \begin{align*}
 \pi\colon \BMS&\to \LL(G,\omega)\\
 (f_+,f_-,d) &\mapsto f_+
 \end{align*}
 is a covering of directed graphs.
 \end{lemma}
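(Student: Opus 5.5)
We need to check that $\pi$ is a morphism of directed graphs, and then that it has the unique lifting property for edges at every vertex, outgoing and incoming. The natural definition of a covering of directed graphs is: for every $\xi=(f_+,f_-,d)\in\BMS$, the map $\pi$ induces a bijection between the edges of $\BMS$ starting at $\xi$ and the edges of $\LL(G,\omega)$ starting at $f_+$, and similarly for edges ending at $\xi$ and edges ending at $f_+$. Edges in both graphs are labelled by edges $e$ of $G$, and we will check that $\pi$ preserves these labels.

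\emph{Morphism and outgoing edges.} An edge of $\BMS$ out of $\xi$ labelled $e$ exists exactly when $f_+$ is $e$-movable (Definition~\ref{def.bms2}). Its target is $\mathrm{mov}_e(\xi)$, which is a BMS state by Lemma~\ref{lemma.mov.mov}, and $\pi(\mathrm{mov}_e(\xi))=\mathrm{mov}_e(f_+)$. Hence $\pi$ sends the edge labelled $e$ out of $\xi$ to the edge labelled $e$ out of $f_+$, so $\pi$ is a morphism of directed graphs. Both sets of outgoing edges are indexed by the same set $\{e\in E_G : f_+ \text{ is } e\text{-movable}\}$, so this correspondence is a bijection.

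\emph{Incoming edges.} An edge into $f_+$ labelled $e$ in $\LL(G,\omega)$ means $f_+$ is anti-$e$-movable, with source $\mathrm{mov}_e^-(f_+)$ (Remark~\ref{rem.mov-back-and-forth}). We must show there is a unique BMS state $\xi'$ with $\mathrm{mov}_e(\xi')=\xi$.
\begin{itemize}
\item \emph{Uniqueness.} The formula $\mathrm{mov}_e(f'_+,f'_-,d')=(f'_++\delta\chi_e,f'_-,d'+\chi_e)$ forces $\xi'=(f_+-\delta\chi_e,\,f_-,\,d-\chi_e)$.
\item \emph{Existence.} We need $\xi'$ to be a BMS state. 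The identity $f_+-\delta\chi_e=f_-+\delta(d-\chi_e)$ is immediate, and $f_+-\delta\chi_e$ is an angular function by anti-$e$-movability. The main obstacle is showing $d(e)\ge 1$, so that $d-\chi_e$ is still a dimension vector.
\end{itemize}

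To prove $d(e)\geq1$, I will use the BMS relation \eqref{eq.bms-state}. Anti-$e$-movability means $f_+(\alpha_1),f_+(\alpha_3)>0$, where $\alpha_1,\alpha_3$ are the two angles having $e$ as target. For such an angle $a$ going from $e_1$ to $e$, we get $d(e)=d(e_1)+f_+(a)-f_-(a)$. This alone does not force positivity, since $f_-(a)$ could be large, so a global argument is needed.

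Suppose for contradiction that $d(e)=0$. I will exploit $f_-=f_+-\delta d$ and argue dually to Lemma~\ref{lemma.exists-e-part2}, following angles forward from $e$. For an angle $a$ out of $e$ to $e_2$, we have $d(e_2)=f_+(a)-f_-(a)+0\ge0$. Since $e$ is a local minimum of $d$, we have $(\delta d)(a)\le 0$ on incoming angles and $\ge0$ on outgoing ones. On incoming angles this gives $f_-(\alpha_i)\ge f_+(\alpha_i)>0$, so $f_-$ is also anti-$e$-movable. It is not yet clear that this yields a contradiction.

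If this stalls, I will instead argue via the set $S=\{e': d(e')=0\}$. Summing the vertex and face relations \eqref{eq.omega-and-g} for $f_+$ and $f_-$ around the region, the contributions of $\delta d$ across the boundary of $S$ must cancel. From this I would try to show that an incoming angle into $S$ with positive $f_+$ is impossible unless it is compensated, reducing to invisible cycles where $d_{\mathrm{inv}}=0$.

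Finally, the BMS condition $(d-\chi_e)_{\mathrm{inv}}=0$ holds trivially, since $0\le d-\chi_e\le d$.
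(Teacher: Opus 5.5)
There is a genuine gap in the incoming-edge half of your plan. The claim you are trying to prove there is false, so no global argument can establish $d(e)\ge 1$. Take any $h\in\LL(G,\omega)$ that is anti-$e$-movable for some edge $e$; for instance $h=\mathrm{mov}_e(g)$ for an $e$-movable $g$, as in Figure~\ref{fig-counterclockwise-edge-move}. Then $\xi=(h,h,\vec{0})$ is a BMS state, and $h=\pi(\xi)$ receives the edge $\mathrm{mov}_e^-(h)\to h$ in $\LL(G,\omega)$. But $\xi$ has no incoming edge at all in $\BMS$: every move raises $d_{\mathrm{tot}}$ by one, and the only candidate predecessor $(h-\delta\chi_e,h,-\chi_e)$ has a negative dimension vector. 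So anti-$e$-movability of $f_+$ does not force $d(e)\ge1$. Your observation that $f_-$ is then also anti-$e$-movable is exactly where the argument stalls. It yields a contradiction only under an extra extremality hypothesis on $f_-$. That is what the proof of Theorem~\ref{thm.connected.components} uses: the maximality of $d_{\max,h}$, and only inside $\bmsp[f_-]$ for that particular $f_-$.

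The notion of covering the paper uses is one-sided. It asks that $\pi$ be a morphism of directed graphs, that it map the edges stemming from each $\xi$ bijectively onto the edges stemming from $\pi(\xi)$, and that it be surjective on vertices. Your first paragraph already proves the morphism property and the bijection on outgoing edges, and this matches the paper's argument. What is missing is surjectivity on vertices, which is immediate: for every $g\in\LL(G,\omega)$ the triple $(g,g,\vec{0})$ is a BMS state. Drop the incoming-edge part, including the approach via the set $S$, add this line, and the proof is complete.
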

 \begin{proof}
 It is immediate from Definitions \ref{def.LG} and \ref{def.bms2} that $\pi$ is a morphism of directed graphs and that it bijectively maps the edges in $\BMS$ stemming from $(f_+,f_-,d)$ to the edges in $\LL(G,\omega)$ stemming from $f_+$. Hence we are only left with showing that $\pi$ is surjective on the set of vertices. But this is immediate: if $g\in \LL(G,\omega)$, then $(g,g,\vec{0})\in \BMS$.
 \end{proof}
\begin{proposition}\label{prop.BMS-is-poset}
The directed graph $\BMS$ is the graph of a poset, i.e., the relation $(f^1_+,f_-^1,d_1)\leq (f_+^2,f_-^2,d_2)$ if and only if there exists a directed path in $\BMS$ going from $(f^1_+,f_-^1,d_1)$ to $(f_+^2,f_-^2,d_2)$ is a partial order relation.
\end{proposition}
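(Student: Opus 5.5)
Reflexivity (empty path) and transitivity (concatenating directed paths) are immediate from the definition of the relation, so the plan reduces to antisymmetry. Equivalently, I will show that the directed graph $\BMS$ has no nontrivial oriented cycles. The natural tool is a monotone integer-valued function on vertices that strictly increases along every edge. The candidate is the total dimension
\[
d_{\mathrm{tot}}=\sum_{e\in E_G} d(e).
\]

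By Definition~\ref{def.moc.bms}, an edge of $\BMS$ labelled by $e$ sends $(f_+,f_-,d)$ to $(f_++\delta\chi_e,\,f_-,\,d+\chi_e)$. Along every directed edge, therefore, the minus-function $f_-$ is unchanged and $d_{\mathrm{tot}}$ increases by exactly $1$. Consequently, along a directed path of length $k$ from $\xi_1=(f^1_+,f^1_-,d_1)$ to $\xi_2=(f^2_+,f^2_-,d_2)$ we get $f^1_-=f^2_-$ and $(d_2)_{\mathrm{tot}}=(d_1)_{\mathrm{tot}}+k$.

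Now suppose $\xi_1\le\xi_2$ and $\xi_2\le\xi_1$, via paths of lengths $k$ and $k'$. Summing the two length identities gives $k+k'=0$, so both paths have length zero and $\xi_1=\xi_2$. This proves antisymmetry.

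As a by-product, $d_{\mathrm{tot}}$ is a grading on each connected component of $\BMS$, which will be useful later. I do not expect a real obstacle here. The only point to verify is that edges of $\BMS$ are exactly the moves of Definition~\ref{def.moc.bms}, which hold $f_-$ fixed and add $\chi_e$ to $d$; Lemma~\ref{lemma.mov.mov} guarantees that the result is again a vertex of $\BMS$.

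Note that the analogous statement for $\LL(G,\omega)$ alone is not immediate, since $f_+$ by itself carries no grading. This is why the argument goes through $\BMS$, where the extra datum $d$ supplies the monotone quantity.
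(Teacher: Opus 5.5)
Your proof is correct and is essentially the paper's argument: the paper also observes that a directed path of $n$ edges raises $d_{\mathrm{tot}}$ by exactly $n$, so if each state is below the other then both paths must be empty. Your extra remark that $f_-$ stays fixed along paths is true but not needed for antisymmetry.
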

\begin{proof}
The relation is clearly reflexive and transitive, so we are left with proving antisymmetry. Let $e_1,\dots, e_n$ be a sequence of edges in $G$ corresponding to a path in $\BMS$ going from $(f^1_+,f_-^1,d_1)$ to $(f_+^2,f_-^2,d_2)$. Then $(d_2)_{\mathrm{tot}}=(d_1)_{\mathrm{tot}}+n$, and so $d_1\leq d_2$ with strict inequality as soon as $n>0$. Therefore, if both $(f^1_+,f_-^1,d_1)\leq (f_+^2,f_-^2,d_2)$ and $(f^2_+,f_-^2,d_2)\leq (f_+^1,f_-^1,d_1)$ hold, then the path between $(f^1_+,f_-^1,d_1)$ and $(f_+^2,f_-^2,d_2)$ contains no edge, and so $(f^1_+,f_-^1,d_1)=(f_+^2,f_-^2,d_2)$.
\end{proof}

In the light of Proposition~\ref{prop.BMS-is-poset} we give the following definition.

\begin{definition}\label{def:plus-subobject}
A BMS state $\xi'$ is called a \emph{plus-subobject} of $\xi$ if $\xi' \le \xi$.
\end{definition}

\begin{remark}
From the proof of Proposition \ref{prop.weak-is-strong} we see that the total dimension $d_{\mathrm{tot}}$ defines a map of posets $d_{\mathrm{tot}}\colon \BMS\to \N$. In other words, $\BMS$ is a graded poset with grading $d_{\mathrm{tot}}$.
\end{remark}

\begin{definition}
For any angular function $g$ in $\LL(G,\omega)$ we denote by $\bmsp[g]$ the subgraph of $\BMS$ whose vertices $(f_+,f_-,d)$ satisfy $f_-=g$.
\end{definition}
\begin{lemma}
The graph of $\bmsp[g]$ is a full subgraph of $\BMS$, i.e., if $\gamma$ is a path in   $\BMS$ joining two vertices in  $\bmsp[g]$, then all vertices in $\gamma$ are in $\bmsp[g]$.
\end{lemma}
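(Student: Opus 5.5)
The claim is that directed edges of $\BMS$ never change the minus-function. With that observation, any path starting in $\bmsp[g]$ stays there, and fullness follows.

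\emph{Step 1.} By Definition~\ref{def.moc.bms} and Definition-Notation~\ref{def.bms2}, a directed edge goes from $(f^1_+,f^1_-,d_1)$ to $(f^2_+,f^2_-,d_2)$ only when
\[
(f^2_+,f^2_-,d_2)=\mathrm{mov}_e(f^1_+,f^1_-,d_1)=(f^1_++\delta\chi_e,\,f^1_-,\,d_1+\chi_e)
\]
for some edge $e$ of $G$. In particular $f^2_-=f^1_-$, so the second component is invariant along every directed edge.

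\emph{Step 2.} Let $\gamma$ be a path in $\BMS$ between two vertices of $\bmsp[g]$. I read ``path'' as a directed path, since that is the notion used in Proposition~\ref{prop.BMS-is-poset}. If undirected paths are meant, the argument is unchanged, because Step 1 says both endpoints of any edge have the same minus-function, whichever way the edge is traversed.

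\emph{Step 3.} Induct along $\gamma$. Its first vertex has minus-function $g$. By Step 1 each subsequent vertex has the same minus-function as the one before it. Hence every vertex of $\gamma$ lies in $\bmsp[g]$.

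\emph{Step 4.} Full subgraph-ness in the usual sense (every edge of $\BMS$ between two vertices of $\bmsp[g]$ belongs to $\bmsp[g]$) holds by definition, because $\bmsp[g]$ is the induced subgraph on those vertices. Its set of vertices is also closed under edges, so $\bmsp[g]$ is in fact a union of connected components of $\BMS$.

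There is no real obstacle; the only care needed is to state explicitly that $\mathrm{mov}_e$ leaves the minus-function $f_-$ untouched.
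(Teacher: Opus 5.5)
Your proof is correct and follows the paper's own argument, which is the one-line observation that $\mathrm{mov}_e$ leaves the minus-function $f_-$ untouched. Your extra remark that $\bmsp[g]$ is a union of connected components of $\BMS$ also follows directly from that observation.
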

\begin{proof}
Immediate: the counterclockwise moves $\mathrm{mov}_e$ do not change the angular function $f_-$.
\end{proof}

Assume now that $(G,\omega)$ has nilpotency degree zero (see Definition \ref{def.nilpotency.degree}), i.e., that every connected component of $G$ contains invisible angular cycles. Under this assumption we have the following.
\begin{lemma}\label{lem.dim-implies-map}
Let $(G,\omega)$ be a decorated planar graph of nilpotency degree zero, let $g$ in $\LL(G,\omega)$ and let $(f^1_+,g,d_1)$ and $(f_+^2,g,d_2)$ be BMS states in $\bmsp[g]$. Then $(f^1_+,g,d_1)\leq (f_+^2,g,d_2)$ if and only if $d_1\leq d_2$ in the pointwise order. In other words the dimension vector
\begin{align*}
\mathrm{dim}\colon \bmsp[g] &\to (E_G)^\N\\
(f_+,g,d)&\mapsto d
\end{align*}
is an embedding of graded posets, where the grading is given by total dimension on both sides.
\end{lemma}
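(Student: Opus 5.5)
The forward implication is immediate. A directed path in $\BMS$ from $(f^1_+,g,d_1)$ to $(f^2_+,g,d_2)$ along edges $e_1,\dots,e_n$ gives $d_2=d_1+\sum_i\chi_{e_i}$, so $d_1\le d_2$ pointwise. The same formula shows that $\mathrm{dim}$ respects the grading by total dimension. It is injective because $f_+=g+\delta d$ is determined by $d$ once $f_-=g$ is fixed. So the whole content is the converse: if $d_1\le d_2$, there is a directed path from the first state to the second. I would prove this by induction on $n=(d_2)_{\mathrm{tot}}-(d_1)_{\mathrm{tot}}$. The case $n=0$ forces $d_1=d_2$, hence equal states.

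For $n>0$, set $\epsilon=d_2-d_1\ge 0$ and $S=\{e: \epsilon(e)>0\}\neq\emptyset$. The plan is to find $e\in S$ with $f^1_+$ $e$-movable. Then Lemma~\ref{lemma.mov.mov} gives the BMS state $(f^1_++\delta\chi_e,g,d_1+\chi_e)$, whose dimension vector is still $\le d_2$, and induction applies. Note that $f^2_+=f^1_++\delta\epsilon$. Being $e$-movable means $f^1_+(a)>0$ for both outgoing angles $a$ at $e$, i.e.\ both angles with source $e$. This mirrors Proposition~\ref{prop.weak-is-strong}, which works backwards via incoming angles; here I work forwards.

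Suppose, for contradiction, that every $e\in S$ has an outgoing angle $a$, with $e\to e'$, satisfying $f^1_+(a)=0$. Then
\[
0\le f^2_+(a)=f^1_+(a)+\epsilon(e')-\epsilon(e)=\epsilon(e')-\epsilon(e),
\]
so $\epsilon(e')\ge\epsilon(e)>0$ and $e'\in S$. Iterating from any $e\in S$ gives a forward path in $\dmg G$ through $S$ along angles with $f^1_+=0$ and non-decreasing $\epsilon$. By finiteness it closes into a cycle $c$. Along $c$, $\epsilon$ is non-decreasing and returns to its starting value, so it is constant. Hence $f^2_+(a)=f^1_+(a)=0$ on every angle of $c$, so $c$ is an angular cycle on which both $f^1_+$ and $f^2_+$ vanish.

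The main obstacle is turning this into a contradiction. Vanishing for one compatible function gives $\lambda_\omega(c)=0$ by \eqref{eq.cycle}, so $c$ is genuinely invisible, i.e.\ it vanishes for every $g'\in\LL(G,\omega)$ (Remark~\ref{rem.zero}). Remark~\ref{rem.constant.on.edges} then forces $d_1$ and $d_2$ to be constant on the edges of $c$. Since both states are BMS states, $(d_1)_{\mathrm{inv}}=(d_2)_{\mathrm{inv}}=0$, so $d_1=d_2=0$ on these edges and $\epsilon=0$ there. This contradicts $c\subseteq S$. The nilpotency-degree-zero hypothesis is not needed for this converse as such; I would only check that it underlies the well-definedness of the BMS conditions being used.
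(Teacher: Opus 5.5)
Your proof is correct, but it takes a different route from the paper's.

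\textbf{The paper's route.} The paper does not induct directly.
\begin{enumerate}
\item It observes that $(f_+^2,f_+^1,d_2-d_1)$ is itself a BMS state.
\item It invokes Proposition~\ref{prop.weak-is-strong} to obtain some directed path from $f^1_+$ to $f^2_+$ in $\LL(G,\omega)$.
\item It lifts this path through the covering $\pi\colon \BMS\to\LL(G,\omega)$ of Lemma~\ref{lem.forget.is.morphism}. The lift starts at $(f^1_+,g,d_1)$ and ends at some $(f^2_+,g,\tilde d_2)$.
\item It then needs the nilpotency-degree-zero hypothesis to show $\tilde d_2=d_2$. Since $\delta(\tilde d_2-d_2)=0$, the difference is constant on connected components. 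It vanishes on an invisible cycle, which exists in every component by the hypothesis.
\end{enumerate}

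\textbf{Your route.} You run a forward version of the argument of Lemma~\ref{lemma.exists-e-part2}. You use outgoing angles and the function $f^1_+$, and the non-negativity of $f^2_+=f^1_++\delta\epsilon$ forces $\epsilon$ to be non-decreasing along the walk. You track dimension vectors move by move, so your path ends at $(f^2_+,g,d_2)$ by construction.

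\textbf{Comparison.} The paper's proof is shorter because it recycles existing results. Yours is self-contained, and it shows correctly that nilpotency degree zero is not needed for this lemma. The paper could drop the hypothesis too: the path built inside the proof of Proposition~\ref{prop.weak-is-strong} uses each edge $e$ exactly $(d_2-d_1)(e)$ times. But the paper only quotes the bare existence statement.

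\textbf{Minor points.}
\begin{itemize}
\item Your closing hedge is unnecessary. When $\Gamma^{\mathrm{inv}}_{(G,\omega)}$ is empty, the BMS condition is simply vacuous.
\item The cycle $c$ you produce is a closed walk, hence connected. So it is a vertex of $\Gamma^{\mathrm{inv}}_{(G,\omega)}$, and $d_{\mathrm{inv}}=0$ applies to it as you use it.
\item The hypothesis is genuinely needed elsewhere. For example, in Proposition~\ref{prop.bms-finiteness}: without invisible cycles one can add a constant to $d$, and $\bmsp[g]$ becomes infinite.
\end{itemize}
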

\begin{proof}
The `only if' part is obvious, so we prove the `if' part. Since $d_1\leq d_2$, the function $d_2-d_1\colon E_G\to \Z$ takes its values in $\N$ and so it is a dimension vector. We have
\[
f_+^2=g+\delta d_2=g+\delta d_1+\delta(d_2-d_1)=f_+^1+\delta(d_2-d_1),
\]
so $(f_+^2,f_+^1,d_2-d_1)$ is a weak BMS state for $(G,\omega)$. For every connected invisible cycle $c$ for $(G,\omega)$ we have $d_1(c)=d_2(c)=0$ and so $(d_2-d_1)(c)=0$. Therefore $(f_+^2,f_+^1,d_2-d_1)$ is a BMS state. By Proposition \ref{prop.weak-is-strong}, there is an oriented path in $\LL(G,\omega)$ from $f^1_+$ to $f^2_+$, and so there is an oriented path in $\BMS$ from $(f^1_+,g,d_1)$ to $(f_+^2,g,\tilde{d}_2)$ for a suitable dimension vector $\tilde{d}_2$. We conclude by showing that $\tilde{d}_2=d_2$. We have
\[
\delta(\tilde{d}_2-d_2)=(f_+^2-g)-(f_+^2-g)=0,
\]
hence $\tilde{d}_2-d_2$ is constant on connected components of $G$. Since the nilpotency degree of $(G,\omega)$ is zero, each connected component of $G$ contains at least an invisible angular cycle. On each edge of an invisible angular cycle both $\tilde{d}_2$ and $d_2$ vanish, so in each connected component of $G$ there is at least an edge where $\tilde{d}_2$ and $d_2$ coincide. Therefore, $\tilde{d}_2$ and $d_2$  coincide on every connected component of $G$ and so they coincide everywhere.
\end{proof}

\begin{theorem}\label{thm.bms-lattice}
Let $(G,\omega)$ be a decorated planar graph of nilpotency degree zero, and let $g$ be an $\omega$-compatible angular function.
The map
\[
\mathrm{dim}\colon \bmsp[g]\to (E_G)^\N
\] is an embedding of graded distributive lattices.
\end{theorem}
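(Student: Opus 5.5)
By Lemma~\ref{lem.dim-implies-map}, $\mathrm{dim}$ is already an embedding of graded posets. It therefore suffices to show that its image is closed under pointwise $\min$ and $\max$ in $(E_G)^\N$. Once this holds, $\bmsp[g]$ is isomorphic as a poset to a sublattice of the distributive lattice $\N^{E_G}$, with meet and join given by $\min$ and $\max$. It is then itself a distributive lattice, $\mathrm{dim}$ is a lattice embedding, and the grading is automatic since $d_{\mathrm{tot}}$ grades both sides.

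Fix $(f^1_+,g,d_1)$ and $(f^2_+,g,d_2)$ in $\bmsp[g]$, and set $m=\min(d_1,d_2)$ and $M=\max(d_1,d_2)$. Both are dimension vectors. Both vanish on every edge of every invisible angular cycle, because $d_1$ and $d_2$ do. The candidates are $(g+\delta m,\,g,\,m)$ and $(g+\delta M,\,g,\,M)$. Equation~\eqref{eq.compatible} holds by construction, and $g+\delta m$ is cohomologous to $g$, so it represents $\omega$. Hence the only thing to check is non-negativity: $g+\delta m\ge 0$ and $g+\delta M\ge 0$ on every angle.

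This is the main step, and I would check it angle by angle. Let $a$ be an angle from $e_1$ to $e_2$. Then $(g+\delta m)(a)=g(a)+m(e_2)-m(e_1)$. Choose $i$ with $m(e_1)=d_i(e_1)$. Since $m(e_2)\le d_i(e_2)$, we get
\[
g(a)+m(e_2)-m(e_1)\ \ge\ \ldots
\]
and this inequality runs in the wrong direction. So choose instead $i$ with $m(e_2)=d_i(e_2)$. Then $m(e_1)\le d_i(e_1)$, which gives
\[
(g+\delta m)(a)=g(a)+d_i(e_2)-m(e_1)\ \ge\ g(a)+d_i(e_2)-d_i(e_1)=f^i_+(a)\ \ge 0 .
\]
Symmetrically, for $M$ choose $i$ with $M(e_1)=d_i(e_1)$. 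Then $M(e_2)\ge d_i(e_2)$, so $(g+\delta M)(a)\ge f^i_+(a)\ge0$. Thus both triples are weak BMS states, and since $m_{\mathrm{inv}}=M_{\mathrm{inv}}=0$ they are BMS states lying in $\bmsp[g]$.

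It remains to identify them as the meet and join in $\bmsp[g]$. By Lemma~\ref{lem.dim-implies-map}, the order on $\bmsp[g]$ is exactly the pointwise order on dimension vectors. Any lower bound $d$ of $d_1$ and $d_2$ satisfies $d\le m$, and any upper bound satisfies $d\ge M$. So $m$ and $M$ give the meet and the join, and the order identification also means a lower bound is $\le$ the meet in $\bmsp[g]$ itself. Finally, $\bmsp[g]$ is finite by Remark~\ref{rem.LG.is.finite}: $f_+$ ranges over a finite set, and $d$ is determined by $f_+$ up to constants on components, fixed by vanishing on invisible cycles. Being finite and nonempty (it contains $(g,g,\vec0)$), $\bmsp[g]$ is a bounded distributive lattice.
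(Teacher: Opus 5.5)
Your proposal is correct and follows essentially the same route as the paper: you show that the pointwise $\max$ and $\min$ of two dimension vectors give a non-negative $g+\delta d$ by the same angle-by-angle comparison with some $f^i_+$, and then read off join, meet and distributivity from the order embedding of Lemma~\ref{lem.dim-implies-map}. Your explicit treatment of the meet, where you choose the index $i$ by the target edge $e_2$ rather than the source edge $e_1$, spells out what the paper dismisses as ``similarly''; the finiteness remark is not needed for the statement, since it is the content of Proposition~\ref{prop.bms-finiteness}.
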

\begin{proof}
The real content of the statement is the fact that $\bmsp[g]$ is a distributive lattice: all other parts have already been proved. Given two elements $(f_+^1,g,d_1)$ and $(f_+^2,g,d_2)$ in $\bmsp[g]$, let $d^\vee=\max_{\mathrm{pw}}\{d_1,d_2\}$, where $\max_{\mathrm{pw}}$ denotes the pointwise maximum, and let $f_+^\vee=g+\delta d^\vee$. For any angle $a$ in $G$, let $e_1$ and $e_2$ be the source and target edges of $a$, respectively. Then $\delta d^\vee (a)=d^\vee(e_2)-d^\vee(e_1)$. Let $i_0\in\{1,2\}$ be such that $d^\vee(e_1)=d_{i_0}(e_1)$. Then
\begin{align*}
f^\vee_+(a)=g(a)+d^\vee(e_2)-d^\vee(e_1)&=g(a)+d^\vee(e_2)-d_{i_0}(e_1)\\
&\geq g(a)+d_{i_0}(e_2)-d_{i_0}(e_1)=f^{i_0}_+(a)\geq 0,
\end{align*}
so $f^\vee_+$ is an angular function and therefore $(f_+^\vee,g,d^\vee)$ is a weak BMS state. On each edge $e$ of an invisible cycle $c$ we have $d_1(e)=d_2(e)=0$, and so $d^\vee(e)=0$. This implies $d^\vee(c)=0$ for any invisible cycle $c$ for $\omega$, so $(f_+^\vee,g,d^\vee)$ is a BMS state. We define the join of the BMS states $(f_+^1,g,d_1)$ and $(f_+^2,g,d_2)$ as
\[
(f_+^1,g,d_1)\vee (f_+^2,g,d_2)=(f_+^\vee,g,d^\vee).
\]
Since $d^\vee\geq d_1,d_2$, we have from Lemma \ref{lem.dim-implies-map} that   $(f_+^\vee,g,d^\vee)\geq (f_+^1,g,d_1),(f_+^2,g,d_2)$. Assume now $(f_+^3,g,d_3)$ is such that $(f^3_+,g,d_3)\geq (f_+^1,g,d_1),(f_+^2,g,d_2)$. Then $d_3\ge d^\vee$, and so, by  Lemma \ref{lem.dim-implies-map} again,  $(f_+^3,g,d_3)\ge (f_+^\vee,g,d^\vee)$. Similarly, we define the meet
\[
(f_+^1,g,d_1)\wedge (f_+^2,g,d_2)=(f_+^\wedge,g,d^\wedge)
\]
by setting $d^\wedge=\min\{d_1,d_2\}$ and $f^\wedge_+=g+\delta d^\wedge$. To prove the distributivity laws
 \begin{align*}
    x \vee (y \wedge z) &= (x \vee y) \wedge (x \vee z),\\
    x \wedge (y \vee z) &= (x \wedge y) \vee (x \wedge z).
    \end{align*}
    for meets and joins of BMS states, use the injectivity of $\mathrm{dim}\colon \bmsp[g]\to (E_G)^\N$ and the distributivity of $\min$ and $\max$ in $(E_G)^\N$.
\end{proof}

\begin{proposition}\label{prop.bms-finiteness}
Let $(G,\omega)$ be a decorated planar graph of nilpotency degree zero, and let $g$ be an $\omega$-compatible angular function. The distributive lattice $\bmsp[g]$ is finite.
\end{proposition}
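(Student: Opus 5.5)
Since $\mathrm{dim}\colon \bmsp[g]\to (E_G)^\N$ is injective (Lemma~\ref{lem.dim-implies-map}), it suffices to bound the values $d(e)$ uniformly over all BMS states $(f_+,g,d)$ in $\bmsp[g]$; injectivity then gives finiteness. Equivalently, since the $f_+$ range over the finite set $\LL(G,\omega)$ (Remark~\ref{rem.LG.is.finite}), it suffices to show $d$ is determined by $f_+$. I will prove the latter.

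Let $(f_+,g,d)$ and $(f_+,g,d')$ be two BMS states with the same plus-function. Then $\delta(d-d')=f_+-g-(f_+-g)=0$, so $d-d'$ is locally constant, i.e.\ constant on each connected component of $|\dmg G|$. The vertices of $\dmg G$ in a component of $\dmg G$ are the edges of a connected component of $G$: two edges sharing an angle are joined by an edge of $\dmg G$, and edges of $G$ around a vertex are linked via angles. Since $m_\omega=\vec{0}$, each connected component of $G$ contains an invisible angular cycle $c$. By Remark~\ref{rem.constant.on.edges} and the BMS condition $d_{\mathrm{inv}}=0$, both $d$ and $d'$ vanish on the edges of $c$. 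Hence $d-d'$ vanishes at some edge of each component, so $d=d'$. This is the same argument used at the end of the proof of Lemma~\ref{lem.dim-implies-map}.

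Thus the map $\bmsp[g]\to\LL(G,\omega)$, $(f_+,g,d)\mapsto f_+$, is injective, and $\LL(G,\omega)$ is finite by Remark~\ref{rem.LG.is.finite}, so $\bmsp[g]$ is finite. For an explicit bound one can also propagate from an edge of $c$ along an oriented path in $\dmg G$ (available by strong connectivity, Remark~\ref{rem.bounded}): by \eqref{eq.bms-state}, $d(e_2)\le d(e_1)+f_+(a)\le d(e_1)+\max\omega$, giving $d(e)\le |E_G|\max\omega$, though this is not needed.

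The argument has no substantial obstacle. The only point needing care is that connectedness of $G$ makes $\dmg G$ connected, so that $\ker\delta$ consists of functions constant on each component of $G$; this fact is already used implicitly in Lemma~\ref{lem.dim-implies-map}.
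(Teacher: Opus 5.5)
Your proof is correct and takes essentially the same route as the paper. The paper also shows that the fibers of the forgetful map $(f_+,g,d)\mapsto f_+$ into the finite set $\LL(G,\omega)$ have at most one element: $\delta(d-d')=0$ makes $d-d'$ constant on each connected component of $G$, and it vanishes on the edges of an invisible cycle in each component. Your opening remark via the injectivity of $\mathrm{dim}$ and the explicit bound on $d(e)$ are harmless but not needed.
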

\begin{proof}
Consider the forgetful map
\begin{align*}
\pi\colon \bmsp[g]&\to \LL(G,\omega)\\
(f_+,g,d)&\mapsto f_+.
\end{align*}
The set $\LL(G,\omega)$ is finite by Remark \ref{rem.LG.is.finite}. To conclude, we just need to show that the fibers of $\pi$ are finite. Let $h\in \LL(G,\omega)$. If $\pi^{-1}(h)\neq \emptyset$, let $(h,g,d_0)$ be an element in $\pi^{-1}(h)$. If $(h,g,d)$ is another element in the same fiber, then $\delta(d-d_0)=0$ and so $d-d_0$ is constant on connected components of $G$. We reason as in the proof of Lemma \ref{lem.dim-implies-map} to see that $d=d_0$. Hence we see that the fibers of $\pi$ contain at most one element.
\end{proof}

\begin{restatable}{theorem}{structure-of-LG}\label{thm.connected.components}
   Let $(G,\omega)$ be a decorated planar graph of nilpotency degree zero. Then each connected component of $\LL(G,\omega)$ is the directed graph of a graded finite distributive lattice.  More precisely, for any $\omega$-compatible $f$ for $(G,\omega)$ there exists an angular function $f_-$ for $(G,\omega)$ such that the forgetful map
   \begin{align*}
       \pi\colon \bmsp[f_-]&\to \LL(G,\omega) \\
       (f_+,f_-,d)&\mapsto f_+
   \end{align*}
induces an isomorphism of directed graphs $\bmsp[f_-]\to \LL(G,\omega)_f$, where $\LL(G,\omega)_f$ denotes the connected component of $f$ in $\LL(G,\omega)$.
 \end{restatable}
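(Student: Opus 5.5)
The plan is to take $f_-$ to be the \emph{minimum} of the connected component $\LL(G,\omega)_f$, and then show that the forgetful map $\pi$ is a bijection onto that component compatible with edges.

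\textbf{Step 1: existence of a minimum.} Fix $f$ and consider the BMS states with plus-function $f$. Among all $g\in\LL(G,\omega)$ for which there is a dimension vector $d$ with $(f,g,d)\in\BMS$, pick one with $d_{\mathrm{tot}}$ maximal. This is possible because $\LL(G,\omega)$ is finite (Remark~\ref{rem.LG.is.finite}) and, for fixed $(f,g)$, $d$ is unique by the argument in the proof of Lemma~\ref{lem.dim-implies-map} (use $\delta$-closedness plus vanishing on invisible cycles). Call this $g$ the element $f_-$.

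What we really need is a stronger statement: every element $h$ of the component admits a BMS state $(h,f_-,d_h)$. The strategy is as follows.

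First, the set $S=\{h:\exists\,(h,f_-,d)\in\BMS\}$ is closed under counterclockwise moves, by Lemma~\ref{lemma.mov.mov}.

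Second, $S$ is closed under clockwise moves $h\mapsto \mathrm{mov}^-_e(h)$, provided $d_h(e)>0$. If $d_h(e)=0$ and $h$ is anti-$e$-movable, we instead want to replace $f_-$ by something lower. So the key claim is that \emph{$f_-$ itself admits no clockwise move} when it is chosen minimal.

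More carefully, I would define $f_-$ as a sink of clockwise moves within the component. To reach one, repeatedly apply $\mathrm{mov}^-$ starting from $f$, composing BMS states $(f,g,d)$. Termination follows because $d_{\mathrm{tot}}$ strictly increases while staying bounded: $d$ vanishes on invisible cycles, each component contains one (nilpotency degree zero), and $|\delta d|\le$ the bound coming from $g\le\max\omega$. This bounds $d$ along paths in the finite graph.

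Then show that for every $h\in S$ and every edge $e$ with $h$ anti-$e$-movable, we have $d_h(e)>0$. Suppose instead $d_h(e)=0$. Then $f_-(\alpha_1)=h(\alpha_1)+d(\text{source})-0$, and similarly for $\alpha_3$, which gives $f_-(\alpha_1)\ge h(\alpha_1)>0$ and $f_-(\alpha_3)>0$. Hence $f_-$ would be anti-$e$-movable, a contradiction. This local inequality check, which uses $d\ge0$ and $d_h(e)=0$, is the main obstacle. I expect it to work precisely because $\alpha_1,\alpha_3$ are the angles \emph{entering} $e$, so $f_-(\alpha_i)=h(\alpha_i)+d(\mathrm{src})-d(e)\ge h(\alpha_i)$.

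Consequently $S$ is closed under both kinds of moves, so $S\supseteq\LL(G,\omega)_f$. Conversely $S\subseteq\LL(G,\omega)_f$ by Proposition~\ref{prop.weak-is-strong}.

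\textbf{Step 2: isomorphism.} The map $\pi\colon\bmsp[f_-]\to\LL(G,\omega)_f$ is surjective by Step 1 and injective by the fiber argument of Proposition~\ref{prop.bms-finiteness}. On edges, $\pi$ is a covering by Lemma~\ref{lem.forget.is.morphism}, restricted to the full subgraph $\bmsp[f_-]$. Since $\pi$ is bijective on vertices, it is bijective on outgoing edges as well, so it is an isomorphism of directed graphs.

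Finally, Theorem~\ref{thm.bms-lattice} and Proposition~\ref{prop.bms-finiteness} show that $\bmsp[f_-]$ is a finite graded distributive lattice whose Hasse diagram is its directed graph. By Lemma~\ref{lem.dim-implies-map}, covers correspond to single moves, since they increase $d_{\mathrm{tot}}$ by $1$. This transfers the lattice structure to $\LL(G,\omega)_f$.
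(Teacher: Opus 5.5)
Your proof is correct and is essentially the paper's: you use the same closure of $\{h : (h,f_-,d)\in\BMS\}$ under counterclockwise and clockwise moves, and the same local computation at the incoming angles, $f_-(\alpha_i)=h(\alpha_i)+d_h(e_i)-d_h(e)\ge h(\alpha_i)>0$ when $d_h(e)=0$, which forces $f_-$ to be anti-$e$-movable. The only difference is how $f_-$ is chosen: you take a sink of clockwise moves reached greedily from $f$ (equivalently, one maximizing $d_{\mathrm{tot}}$), whereas the paper takes $f_-=h-\delta d_{\max,h}$ with $d_{\max,h}$ the pointwise maximum over $\mathrm{BMS}^-(G,\omega;h)$, which needs an extra join-type argument to show $f_-\ge 0$ and gets its contradiction from pointwise maximality rather than from $f_-$ being a sink.
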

\begin{proof}
If $\LL(G,\omega)$ is empty there is nothing to prove. If $\LL(G,\omega)$ is nonempty, let $h$ be an element in $\LL(G,\omega)$, and let $\bmsm[h]$ be the subgraph of $\BMS$ whose vertices $(h_+,h_-,d)$ satisfy $h_+=h$. By the same argument as in the proof of Proposition \ref{prop.bms-finiteness}, the graph $\bmsm[h]$ is finite. Let
\[
d_{\max,h}=\max_{\mathrm{pw}}\{d\,:\,(h,h_-,d)\in \bmsm[h]\},
\]
and let $f_-=h-\delta d_{\max,h}$. By the same argument as in the proof of Theorem \ref{thm.bms-lattice}, the function $f_-$ is always non-negative, so it is an angular function and $(h,f_-,d_{\max,h})$ is a BMS state. The forgetful map $\pi\colon \bmsp[f_-]\to \LL(G,\omega)$ factors as
\[
\bmsp[f_-]\hookrightarrow \BMS\to \LL(G,\omega),
\]
and so by Lemma \ref{lem.forget.is.morphism} it is a morphism of directed graphs. As seen in the proof of Proposition \ref{prop.bms-finiteness}, the projection map $\pi\colon \bmsp[f_-]\to \LL(G,\omega)$ is injective on the set of vertices. Since, again by Lemma \ref{lem.forget.is.morphism}, $\pi \colon \BMS\to \LL(G,\omega)$ is a covering, this implies that $\pi\colon \bmsp[f_-]\to \LL(G,\omega)$ is an embedding of directed graphs. Since $\bmsp[f_-]$ is a lattice, it is connected as an unoriented graph, and so also its image in $\LL(G,\omega)$ is connected as an unoriented graph. The two BMS states $(h,f_-,d_{\max,h})$ and $(f_-,f_-,\vec{0})$ in $\bmsp[f_-]$ are such that $\vec{0}\leq d_{\max,h}$. Hence, by Lemma \ref{lem.dim-implies-map} we have $(f_-,f_-,\vec{0})\leq (h,f_-,d_{\max,h})$ and so there is an oriented path in $\LL(G,\omega)$ from $f_-$ to $h$. Hence, in particular, the connected component of $h$ and of $f_-$ in $\LL(G,\omega)$ coincide. Since $\pi(f_-,f_-,\vec{0})=f_-$, we have that the image of $\bmsp[f_-]$ in $\LL(G,\omega)$ is contained in the connected component $\LL(G,\omega)_{f_-}$ of $f_-$. We are then left with showing that $\pi$ is surjective on this component. If a directed edge in  $\BMS$ has its source in $\bmsp[f_-]$, then it has also its target in $\bmsp[f_-]$. From this and the fact that $\BMS\to \LL(G,\omega)$ is a covering, we see that in order to prove that $\pi\colon \bmsp[f_-]\to\LL(G,\omega)_{f_-}$ is surjective we only have to show that it is surjective on vertices. Let therefore $k$ be a vertex in $\LL(G,\omega)_{f_-}$. We prove that $k$ is in the image of $\pi$ by induction on the path distance from $k$ to $f_-$. If this distance is zero, then $k=f_-$ and so it trivially is in the image of $\pi$. Assume the statement is true for all $k$ with $\mathrm{dist}(k,f_-)\leq n$ and assume we have $\mathrm{dist}(k,f_-)\leq n+1$. Then we can find $\tilde{k}$ with $\mathrm{dist}(\tilde{k},f_-)\leq n$ and $\mathrm{dist}(\tilde{k},k)\leq 1$. If $\mathrm{dist}(\tilde{k},k)=0$ there's nothing to prove, so let us focus on the case $\mathrm{dist}(\tilde{k},k)=1$. This means that either we have a directed edge in $\LL(G,\omega)$ from $\tilde{k}$ to $k$ or vice versa. The first case is simple: by the inductive assumption, there exists a BMS state $(\tilde{k},f_-,\tilde{d})$ in $\bmsp[f_-]$. Denoting by $e$ the edge of $G$ such that $\mathrm{mov}_e(\tilde{k})=k$, we have
\[
\mathrm{mov}_e(\tilde{k},f_-,\tilde{d})=(k,f_-,\tilde{d}+\chi_e)\in \bmsp[f_-]
\]
and so $k$ is in the image of $\pi$. Assume now we have instead $\mathrm{mov}_e(k)=\tilde{k}$. Then $k-f_-=\delta(\tilde{d}-\chi_e)$, and so $(k,f_-,\tilde{d}-\chi_e)$ is an element of $\bmsp[f_-]$ as soon as $\tilde{d}-\chi_e$ takes non-negative values, i.e., as soon as $\tilde{d}(e)\geq 1$. To see that it is necessarily so, assume $\tilde{d}(e)= 0$. In the same notation as in Figure \ref{fig-edge-adjacent-angles}, denote by $e_i$ the edge distinct from $e$ and adjacent to the angle $\alpha_i$. We have
\begin{align*}
0&=\tilde{d}(e)=\tilde{d}(e_1)+\tilde{k}(\alpha_1)-f_-(\alpha_1)\\
&=\tilde{d}(e_1)+(\mathrm{mov}_e(k))(\alpha_1)-f_-(\alpha_1)\\
&=\tilde{d}(e_1)+k(\alpha_1)+1-f_-(\alpha_1)\\
\end{align*}
and so
\[
f_-(\alpha_1)=\tilde{d}(e_1)+k(\alpha_1)+1\geq 1.
\]
The same argument shows that also $f_-(\alpha_3)\geq 1$. Then $f_-$ is anti-$e$-movable, and we can consider the angular function $\hat{f}_-=\mathrm{mov}_e^-(f_-)$. Let $\hat{d}_{\max,h}=d_{\max,h}+\chi_e$. Then we have
\[
h=f_-+\delta d_{\max,h}=f_-+\delta \hat{d}_{\max,h}-\delta \chi_e=\hat{f}_-+\delta \hat{d}_{\max,h},
\]
so $(h,\hat{f}_-,\hat{d}_{\max,h})$ is an element in $\bmsm[h]$. But then, by definition of $d_{\max,h}$ we have
\[
{d}_{\max,h}(e)+1=\hat{d}_{\max,h}(e)\leq {d}_{\max,h}(e),
\]
which is manifestly absurd.
\end{proof}

Although we are not able to fully characterize the connectedness of $\LL(G,\omega)$ for a decorated planar graph of nilpotency degree zero in terms of cohomological or combinatorial properties of $(G,\omega)$, we are able to provide a simple sufficient condition for connectedness. As we are going to show in Section \ref{sec-kauffman-states}, this is a generalization of Kauffman's Clock Theorem \cite{FNT}.

\begin{theorem}\label{thm-l-graph-connectedness}
    Let $(G,\omega)$ be a connected decorated planar graph of nilpotency degree zero.
    If $\Gamma_{(G,\omega)}^{\mathrm{inv}}$ is connected, then $\LL(G,\omega)$ is connected.
\end{theorem}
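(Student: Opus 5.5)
The idea is to connect any two elements of $\LL(G,\omega)$ through a common upper bound built with the pointwise-maximum trick from the proof of Theorem~\ref{thm.bms-lattice}, and then to invoke Proposition~\ref{prop.weak-is-strong} twice. Fix $f,g\in\LL(G,\omega)$. Both are cocycle representatives of the same class $\omega\in H^1(|\dmg G|,\Z)$, so there is $\epsilon\in C^0_{CW}(|\dmg G|,\Z)$, i.e.\ a function $\epsilon\colon E_G\to\Z$, with $g=f+\delta\epsilon$.

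\emph{Step 1: $\epsilon$ is constant on the invisible part.} By Remark~\ref{rem.zero}, $f(a)=g(a)=0$ for every angle $a$ on an invisible angular cycle. Hence $\delta\epsilon(a)=0$ there, and $\epsilon$ takes the same value on all edges of a connected invisible angular cycle. Two cycles adjacent in $\Gamma^{\mathrm{inv}}_{(G,\omega)}$ share an edge of $G$. Since $\Gamma^{\mathrm{inv}}_{(G,\omega)}$ is connected, $\epsilon$ therefore takes a single value $c$ on every edge lying on some invisible angular cycle. This set of edges is nonempty because $m_\omega=\vec 0$. Put $\epsilon'=\epsilon-c$. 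Then $g=f+\delta\epsilon'$ and $\epsilon'$ vanishes on all invisible edges.

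\emph{Step 2: a common upper bound.} Set $m=\max_{\mathrm{pw}}\{\epsilon',0\}$ and $m'=\max_{\mathrm{pw}}\{-\epsilon',0\}$; both are dimension vectors. Since $m-m'=\epsilon'$, the two expressions
\[
h\coloneqq f+\delta m \qquad\text{and}\qquad g+\delta m'
\]
coincide. Next I check that $h$ is non-negative, arguing as in Theorem~\ref{thm.bms-lattice}. Take an angle $a$ from $e_1$ to $e_2$. If $m(e_1)=0$, then $h(a)=f(a)+m(e_2)\ge 0$. If $m(e_1)=\epsilon'(e_1)$, then
\[
h(a)\ge f(a)+\epsilon'(e_2)-\epsilon'(e_1)=g(a)\ge 0.
\]
So $h$ is an angular function representing $\omega$, i.e.\ $h\in\LL(G,\omega)$.

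\emph{Step 3: conclusion.} The triples $(h,f,m)$ and $(h,g,m')$ are weak BMS states. Both $m$ and $m'$ vanish on every edge of every invisible cycle because $\epsilon'$ does, so $m_{\mathrm{inv}}=m'_{\mathrm{inv}}=0$ and both triples are BMS states. Proposition~\ref{prop.weak-is-strong} then gives oriented paths in $\LL(G,\omega)$ from $f$ to $h$ and from $g$ to $h$. Hence $f$ and $g$ lie in the same connected component, and $\LL(G,\omega)$ is connected.

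The only delicate point is Step~1. This is exactly where connectedness of $\Gamma^{\mathrm{inv}}_{(G,\omega)}$ is used: it lets a single global shift of $\epsilon$ kill it on all invisible edges at once. Without it, different components could force different constants, and the BMS condition $d_{\mathrm{inv}}=0$ would fail for any choice of shift. The positivity check in Step~2 is routine once the pointwise maximum is chosen.
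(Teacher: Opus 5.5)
Your proof is correct, and it takes a genuinely different route from the paper's.

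\textbf{The paper's route.} The paper argues by contradiction and relies on Theorem~\ref{thm.connected.components}. Assuming two distinct components, it takes the maximum $f_+^1$ of one and the minimum $f_-^2$ of the other. It writes $f_+^1=f_-^2+\delta\Delta$, with $\Delta$ normalized to vanish on invisible edges; this is where connectedness of $\Gamma^{\mathrm{inv}}_{(G,\omega)}$ enters, exactly as in your Step~1. It then notes that $\Delta\ge 0$ is impossible by Proposition~\ref{prop.weak-is-strong}. Finally, on the subgraph where $\Delta$ attains its negative minimum $\mu$, it uses that $f_+^1$ is not $e$-movable for any $e$ to build an angular path on which $f_+^1$ vanishes. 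This produces an invisible cycle on which $\Delta=\mu<0$, a contradiction.

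\textbf{Your route.} You never need the difference to be non-negative. You split $\epsilon'$ into its positive and negative parts and reuse the pointwise-maximum estimate from Theorem~\ref{thm.bms-lattice}. This directly gives a common upper bound $h=f+\delta m=g+\delta m'$ together with two BMS states $(h,f,m)$ and $(h,g,m')$, and two applications of Proposition~\ref{prop.weak-is-strong} finish the argument.

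\textbf{Comparison.} Your version is constructive and self-contained. It bypasses Theorem~\ref{thm.connected.components}, so it needs neither the lattice structure of components nor their extremal elements. It replaces the path-chasing inside the minimum locus of $\Delta$ with a one-line positivity check. It also exhibits, for any two elements, an explicit common upper bound reachable by oriented paths; symmetrically, $f-\delta m'=g-\delta m$ is a common lower bound. In your argument the hypothesis $m_\omega=\vec 0$ only serves to fix the constant $c$: with no invisible cycles, Step~1 and the BMS condition would be vacuous. In the paper, that hypothesis is needed to invoke Theorem~\ref{thm.connected.components}. The paper's route, in turn, shows where an obstruction to connectedness would have to sit: on an invisible cycle where the potential difference between the two components is minimal.
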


\begin{proof}
    Assume that $\LL(G,\omega)_1=\LL(G,\omega)_{f_1}$ and $\LL(G,\omega)_2=\LL(G,\omega)_{f_2}$ are two distinct connected components of $\LL(G,\omega)$. By Theorem \ref{thm.connected.components}, there exist angular functions $f_-^i$ in $\LL(G,\omega)$ such that
    \[
    \pi\colon \bmsp[f_-^i]\xrightarrow{\sim} \LL(G,\omega)_{f_i}
    \]
    for $i=1,2$. Let $f_+^1$ be the maximum in the lattice $\LL(G,\omega)_{f_1}$. The angular function $f_-^2$ is clearly the minimum in the lattice $\LL(G,\omega)_{f_2}$. Since both $f_+^1$ and $f_-^2$ represent the cohomology class $\omega$, there exists $\Delta\colon E_G\to \Z$ such that $f_+^1=f_-^2+\delta\Delta$. By the same argument as in Remark \ref{rem.constant.on.edges}, we have that $\Delta$ is constant on connected 
    invisible angular cycles. Moreover, since $\dmg G$ is connected, the function $\Delta$ is unique up to a constant. Therefore, since $\Gamma_{(G,\omega)}^{\mathrm{inv}}$ is connected, we can fix $\Delta$ to be the unique function $\Delta\colon E_G\to \Z$ with $f_+^1=f_-^2+\delta\Delta$ and such that $\Delta_{\mathrm{inv}}=0$.
    If $\Delta$ were non-negative, then $(f_+^1,f_-^2,\Delta)$ would 
    be a BMS state. But then, by Proposition \ref{prop.weak-is-strong}, $f_+^1$ and $f_-^2$ would be connected by a path in $\LL(G,\omega)$, contradicting the fact that they belong to distinct connected components. Therefore the minimum
    \[
    \mu=\min_{e\in E_G}\Delta(e)
    \]
    is strictly negative. Let $G_\mu\subseteq G$ be the subgraph whose edges are the edges of $G$ such that $\Delta(e)=\mu$. We want to show that for each edge $e$ in $G_\mu$ there exists at least an angle $\alpha$ stemming from $e$ and ending on another edge $e'$ of $G_\mu$. We use notation as in Figure \ref{fig-edge-adjacent-angles} and in the proof of Theorem \ref{thm.connected.components}. By its maximality, the angular function $f_+^1$ is not $e$-movable, so at least one between $f_+^1(\alpha_2)$ and $f_+^1(\alpha_4)$ is zero. Assume we have $f_+^1(\alpha_2)=0$. Then
    \[
    \Delta(e_2)=\Delta(e)-f_-^2(\alpha_2)\geq \Delta(e)=\mu.
    \]
    By definition of $\mu$ this gives $\Delta(e_2)=\mu$, and so $e_2$ is an edge in $G_\mu$. The proof for the case $f_+^1(\alpha_4)=0$ is perfectly analogous. We can therefore form an angular path in $G_\mu$ of arbitrary length. Since $G_\mu$ is finite, this path will necessarily contain angular cycles. By construction, the angular function $f_1^+$ vanishes on all angles in these cycles, so they will be invisible angular cycles for $\omega$. Let us pick one of these cycles $c$, that we may choose to be connected.
    If now $e_0$ is an edge in $c$, we find
    \[
    0=\Delta(e_0)=\mu<0,
    \]
    where the first equality comes from the fact that $\Delta$ is zero on $\pi_0(\Gamma_G^{\mathrm{inv}})$ and so it vanishes on every edge of an invisible connected cycle,
     and the second one from $e_0$ being an edge in $G_\mu$. This contradiction shows that $\LL(G,\omega)$ must be connected.
    \end{proof}
 If $\Gamma^{\mathrm{inv}}_{(G,\omega)}$ is not connected,
 then the connectivity of $\LL(G,\omega)$ is not guaranteed. See
 Figure \ref{fig-hopf-link-example} for an example.

 \begin{figure}[htp]
\centering    \includegraphics[width=220.45248pt,totalheight=162.20651pt]{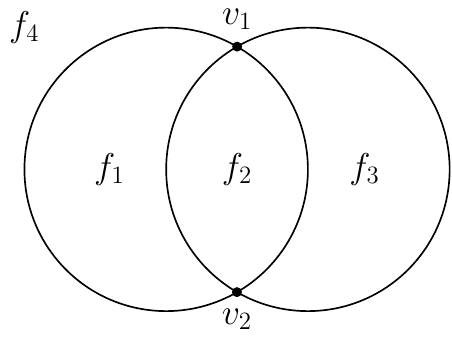}
     \caption{Consider the diagram $G$ of the Hopf link in the figure, and let $\omega$ be the weight defined as follows: $\omega(v_1)=\omega(v_2)=\omega(f_1)=\omega(f_3)=1$ and $\omega(f_2)=\omega(f_4)=0$. Then $\LL(G,\omega)$ consists of two disconnected points.}
     \label{fig-hopf-link-example}
 \end{figure}

Putting Theorems \ref{thm.connected.components} and \ref{thm-l-graph-connectedness} together we obtain the following.
\begin{corollary}\label{cor-distributive-lattice}
    Let $(G,\omega)$ be a connected decorated planar graph of nilpotency degree zero.
    If $\Gamma_{(G,\omega)}^{\mathrm{inv}}$ is connected, then $\LL(G,\omega)$ is a distributive lattice and the minimum $f_-$ of $\LL(G,\omega)$ induces an isomorphism of distributive lattices $\bmsp[f_-]\to \LL(G,\omega)$ given by
    the forgetful map
     \begin{align*}
       \pi\colon \bmsp[f_-]&\to \LL(G,\omega) \\
       (f_+,f_-,d)&\mapsto f_+.
   \end{align*}
\end{corollary}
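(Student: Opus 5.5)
The corollary follows by combining Theorem~\ref{thm-l-graph-connectedness} with Theorem~\ref{thm.connected.components}. First, Theorem~\ref{thm-l-graph-connectedness} tells us that $\LL(G,\omega)$ is connected, since $\Gamma^{\mathrm{inv}}_{(G,\omega)}$ is connected and the nilpotency degree is zero. We may assume $\LL(G,\omega)$ is nonempty, since otherwise the statement is vacuous. Choose any $f\in\LL(G,\omega)$. Because $\LL(G,\omega)$ is connected, $\LL(G,\omega)_f=\LL(G,\omega)$. Theorem~\ref{thm.connected.components} then provides an angular function $f_-$ such that the forgetful map $\pi\colon\bmsp[f_-]\to\LL(G,\omega)$ is an isomorphism of directed graphs.

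Next we transport the lattice structure. By Theorem~\ref{thm.bms-lattice} and Proposition~\ref{prop.bms-finiteness}, $\bmsp[f_-]$ is a finite graded distributive lattice, and its order is the reachability order of its directed graph. This holds by definition via Proposition~\ref{prop.BMS-is-poset}, together with the fullness lemma: every path in $\BMS$ between two elements of $\bmsp[f_-]$ stays inside $\bmsp[f_-]$. An isomorphism of directed graphs preserves and reflects directed paths. Hence the reachability relation on $\LL(G,\omega)$ is a partial order, and $\pi$ is an order isomorphism onto it. It therefore transports meets and joins, so $\LL(G,\omega)$ is a distributive lattice and $\pi$ is an isomorphism of distributive lattices.

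It remains to identify $f_-$ as the minimum of $\LL(G,\omega)$. The minimum of $\bmsp[f_-]$ is $(f_-,f_-,\vec 0)$. Indeed, for any $(f_+,f_-,d)$ we have $\vec0\le d$, so Lemma~\ref{lem.dim-implies-map} gives $(f_-,f_-,\vec0)\le(f_+,f_-,d)$. Since $\pi(f_-,f_-,\vec0)=f_-$, the element $f_-$ is the minimum of $\LL(G,\omega)$, and it is unique. Consequently the isomorphism is induced by the minimum, as stated. The only point requiring care is that the order on $\LL(G,\omega)$ is defined through directed paths, so one must check that paths in $\LL(G,\omega)$ lift into $\bmsp[f_-]$. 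This is exactly what the directed-graph isomorphism guarantees, so I expect no real obstacle.
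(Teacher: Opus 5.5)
Your proposal is correct and follows the paper's own route: the paper proves this corollary just by combining Theorem~\ref{thm-l-graph-connectedness} (which makes $\LL(G,\omega)$ connected) with Theorem~\ref{thm.connected.components} (which identifies the component with $\bmsp[f_-]$). Your extra steps only make explicit what the paper leaves implicit. These are transporting the reachability order through the directed-graph isomorphism, using the fullness of $\bmsp[f_-]$, and identifying $f_-$ as the minimum via $(f_-,f_-,\vec 0)$ and Lemma~\ref{lem.dim-implies-map}.
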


\section{From Kauffman States to BMS States}\label{sec-kauffman-states}
  We now recall the definition of Kauffman states on a connected link diagram, and show how each Kauffman state can be interpreted as a BMS state. More precisely, we will exhibit a canonical isomorphism of lattices between the lattice of Kauffman states and a lattice of BMS states. Looking at Kauffman states merely as a directed graph, this isomorphism can be seen as a proof of the fact that Kauffman states form a distributive lattice.
  \begin{definition}
   Let $G$ be a connected link diagram, and let $e$ be an edge of $G$. Denote by $f_1$ and $f_2$ the two faces adjacent to $e$.  A \emph{Kauffman state of $G$ relative to the edge $e$} is a subset of angles
\[
K=\{a_1,\dots,a_n\}\subset A_G
\]
such that for every vertex $v\in V_G$ and for every face $f\in F_G-\{f_1,f_2\}$ one has
\[
|A_G(v)\cap K|=|A_G(f)\cap K|=1.
\]
\end{definition}
One customarily depicts a Kauffman state by marking the edge $e$ and by putting a $\bullet$ in the angles in the subset $K$, as in Figure~\ref{fig-kauffman-counterclockwise-move}.
\begin{remark}
The fact that one excludes two faces is motivated by Euler’s formula
\[
|F_G|-|E_G|+|V_G|=2,
\]
that, together with
\[
|E_G|=\displaystyle \frac{1}{2}\sum_{v\in V_G}\deg(v)=2|V_G|,
\]
gives $|V_G|=|F_G|-2$.
\end{remark}

Given a Kauffman state $K$ relative to $e$, a \emph{Kauffman counterclockwise move along the edge $e'$}, with $e'\neq e$, is defined as follows.
Let $v'_1$ and $v'_2$ be the endpoints of $e'$, and let $f'_1$ and $f'_2$ be the two faces adjacent to $e'$.
Assume that $a'_1\in A_G(v'_1)\cap A_G(f'_1)$ and $a'_2\in A_G(v'_2)\cap A_G(f'_2)$ and that, when traversing $e'$ in either direction, the first angle encountered between $a'_1$ and $a'_2$ lies on the right. The move consists of replacing $a'_1$ with the angle that immediately follows it with respect to the counterclockwise cyclic ordering of $A_G(v'_1)$, and replacing $a'_2$ with the angle that immediately follows it with respect to the counterclockwise cyclic ordering of $A_G(v'_2)$. The resulting configuration is again a Kauffman state relative to $e$. See Figure \ref{fig-kauffman-counterclockwise-move} for an example.
\begin{figure}[htp]
\centering    \includegraphics[width=268.74481pt,totalheight=106.54382pt]{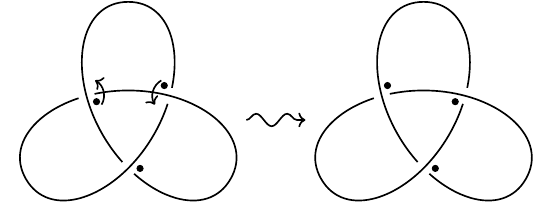}
    \caption{A Kauffman counterclockwise move on the trefoil knot diagram.}
    \label{fig-kauffman-counterclockwise-move}
\end{figure}
Similarly, one defines clockwise moves.
\begin{remark}
    We remark that the Kauffman moves in \cite{FNT} are more general than ours. In this work, as in much of the literature, the moves are along edges, while in \cite{FNT} a move can be done whenever two $\bullet$ are on angles adjacent to edges $e_1,e_2$, possibly equal, themselves adjacent to the same two faces. See \cite{FNT} or \cite[Figure~5]{Kauff-Book}. It is easy to see that for prime knot diagrams the two notions coincide.
\end{remark}
\begin{definition}
We denote by $\mathrm{Kauff}(G,e)$ the directed graph whose vertices are Kauffman states of $G$ relative to $e$, and whose arrows correspond to counterclockwise Kauffman moves.
\end{definition}
\begin{definition}\label{def.kauffman-weight}
Thanks to \eqref{eq:constraint} we can define a characteristic weight $\omega_e$ for $G$ (see Definition \ref{def.char-weight}), depending on the distinguished edge $e$, by
\[
\omega_e(v)=1\quad\text{if }v\in V_G\qquad
\omega_e(f)=
\begin{cases}
1 &\text{if } f\in F_G-\{f_1,f_2\}\\[4pt]
0 &\text{if } f\in\{f_1,f_2\}
\end{cases}
\]
Such a function is called the \emph{Kauffman weight of $G$ relative to the edge $e$}. The fact that $\omega_e$ satisfies equation \eqref{eq:constraint} is an immediate consequence of the equality $|V_G|=|F_G|-2$.

\end{definition}
\begin{figure}[htp]
 \centering    \includegraphics[width=92.02306pt,totalheight=97.35605pt]{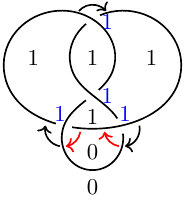}
    \caption{A Kauffman weight on the figure–eight knot:  the two invisible angular cycles are highlighted in red and black respectively.}
    \label{fig-figure-eight-kauffman-weight}
\end{figure}

The following lemma is immediate from equation \eqref{eq.omega-and-g}.
\begin{lemma}
Let $K$ be a Kauffman state for $(G,e)$.
The characteristic function $\chi_K\colon A_G \to \N$ of the subset $K\subseteq A_G$ is an angular function for
$(G,\omega_e)$. This defines an injective map of sets
\begin{align*}
\chi\colon \mathrm{Kauff}(G,e) &\hookrightarrow \LL(G,\omega_e)\\
K &\mapsto \chi_K.
\end{align*}
\end{lemma}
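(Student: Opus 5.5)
We prove the lemma by checking the compatibility equations of Remark~\ref{rem.omega.as.function} directly and then observing that characteristic functions determine their subsets.

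\textbf{Well-definedness.} Let $K$ be a Kauffman state relative to $e$, and let $f_1,f_2$ be the two faces adjacent to $e$. The function $\chi_K\colon A_G\to\N$ takes values in $\{0,1\}\subseteq\N$, so it is an angular function in the sense of Definition~\ref{def.angular.function}. For $\omega_e$-compatibility it suffices, by Remark~\ref{rem.omega.as.function}, to verify the equations \eqref{eq.omega-and-g} at every vertex and every face. For any vertex or face $x$ we have $\sum_{a\in A_G(x)}\chi_K(a)=|A_G(x)\cap K|$. The definition of a Kauffman state gives $|A_G(v)\cap K|=1=\omega_e(v)$ for every $v\in V_G$, and $|A_G(f)\cap K|=1=\omega_e(f)$ for every $f\in F_G-\{f_1,f_2\}$.

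\textbf{The two excluded faces.} It remains to show $|A_G(f_i)\cap K|=0=\omega_e(f_i)$ for $i=1,2$. This is the only non-immediate point. We count the elements of $K$ in two ways.
\begin{itemize}
\item Each angle belongs to exactly one vertex, so summing over vertices gives $|K|=|V_G|$.
\item Each angle also belongs to exactly one face, so summing over faces gives $|K|=(|F_G|-2)+|A_G(f_1)\cap K|+|A_G(f_2)\cap K|$.
\end{itemize}
Since $|V_G|=|F_G|-2$ by the Euler-formula remark, the two counts force $|A_G(f_1)\cap K|+|A_G(f_2)\cap K|=0$. Both terms are therefore zero.

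One caveat: if $f_1=f_2$, the same argument still yields zero angles of $K$ in that face. The Kauffman weight is then defined consistently with the constraint \eqref{eq:constraint}.

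\textbf{Injectivity.} A subset $K\subseteq A_G$ is recovered from its characteristic function as $K=\chi_K^{-1}(1)$. Hence $\chi$ is injective.
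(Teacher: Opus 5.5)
Your proof is correct and follows the paper's route, which is direct verification of \eqref{eq.omega-and-g}; the paper simply calls the lemma immediate, whereas you supply the double-counting step via $|V_G|=|F_G|-2$ showing that $K$ contains no angle of $f_1$ or $f_2$. Your side caveat about $f_1=f_2$ is inaccurate: the count would give $|A_G(f_1)\cap K|=-1$, so there would be no Kauffman states, and $\omega_e$ would violate \eqref{eq:constraint}. This does not affect the proof, because the case never arises: a $4$-valent graph has no bridges.
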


\begin{lemma}
The map $\chi$ is an isomorphism of directed graphs.
\end{lemma}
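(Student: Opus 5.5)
We must show three things: $\chi$ is surjective on vertices, it carries Kauffman moves to counterclockwise moves, and every counterclockwise move in $\LL(G,\omega_e)$ comes from a Kauffman move.

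\emph{Surjectivity.} Since $\omega_e$ is characteristic, every $g\in\LL(G,\omega_e)$ satisfies $g(a)\le 1$ for all angles $a$. This follows from \eqref{eq.omega-and-g}: $g(a)\le\omega_e(v)=1$ for the vertex $v$ of $a$. Hence $g=\chi_K$, where $K=g^{-1}(1)$. The equations $\sum_{a\in A_G(v)}g(a)=1$ and $\sum_{a\in A_G(f)}g(a)=\omega_e(f)$ say exactly that $K$ meets each $A_G(v)$ and each $A_G(f)$ with $f\neq f_1,f_2$ in one angle, and meets $A_G(f_1)$ and $A_G(f_2)$ in no angle. So $K$ is a Kauffman state relative to $e$, and $\chi$ is a bijection on vertices.

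\emph{Edges.} I would compare the two moves locally, using the notation of Figure~\ref{fig-edge-adjacent-angles} for an edge $e'$ with adjacent angles $\alpha_1,\dots,\alpha_4$.

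First take $e'=e$. Each of the four angles $\alpha_i$ lies in $f_1$ or $f_2$, so $g(\alpha_i)=0$ for every $g\in\LL(G,\omega_e)$. Thus no $g$ is $e$-movable, and there is no Kauffman move along $e$ either.

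Now take $e'\neq e$. A $\{0,1\}$-valued $g=\chi_K$ is $e'$-movable if and only if $g(\alpha_2)=g(\alpha_4)=1$. In that case $\mathrm{mov}_{e'}(g)$ sets $\alpha_2,\alpha_4$ to $0$ and $\alpha_1,\alpha_3$ to $1$. The key identification is that $\alpha_2,\alpha_4$ are the two angles adjacent to $e'$, one at each endpoint and in opposite faces, which are ``first on the right'' for an observer traversing $e'$. This is the remark following the definition of $e$-movability. These are exactly the positions $a'_1,a'_2$ required by the Kauffman move. Moreover, $\alpha_1$ (resp. $\alpha_3$) is the angle immediately following $\alpha_2$ (resp. $\alpha_4$) in the counterclockwise order at the same endpoint, since the two angles at an endpoint of $e'$ on either side of $e'$ are consecutive. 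So $\chi(\text{Kauffman move})=\mathrm{mov}_{e'}(\chi_K)$.

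Conversely, if $\chi_K$ is $e'$-movable, then $\alpha_2,\alpha_4\in K$, so the Kauffman move along $e'$ is available and maps to this edge. Since arrows in both graphs are indexed by the edge $e'$ together with the source, this gives a bijection on arrows compatible with sources and targets.

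\emph{Main obstacle.} The step I expect to need the most care is the orientation bookkeeping: checking that the clockwise angle convention, the $\dmg G$ orientation (with $\alpha_2,\alpha_4$ having $e'$ as source), and the counterclockwise successor in $A_G(v'_i)$ agree. One must also check that the degenerate situations (both endpoints' angles lying in the same face, or multiple edges) cause no trouble. For this I would argue purely locally at each endpoint of $e'$, using that $G$ is loopless. I would also rely on the convention, fixed in this paper, that Kauffman moves are moves along edges, so that no generalized moves in the sense of \cite{FNT} need to be matched.
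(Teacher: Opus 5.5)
Your proposal is correct and follows the paper's argument: the compatibility equations \eqref{eq.omega-and-g} force every $g\in\LL(G,\omega_e)$ to be $\{0,1\}$-valued, which gives the bijection on vertices, and the moves are then matched locally at each edge. Where the paper simply calls the correspondence of moves ``manifest'', you write it out, including the useful remark that no move along the marked edge $e$ exists on either side because its four adjacent angles lie in the weight-zero faces $f_1,f_2$.
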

\begin{proof}
We have already noticed that $\chi$ is injective on the vertices. To see that it is also surjective, notice that equation \eqref{eq.omega-and-g} forces any angular function $f\in \LL(G,\omega_e)$ to be the characteristic function of a subset $K$ of $A_G$, that is, a Kauffman state for $(G,e)$.
The fact that under this bijection the counterclockwise moves on $\mathrm{Kauff}(G,e)$ precisely correspond to counterclockwise moves on $\LL(G,\omega_e)$ is manifest. Conversely, every counterclockwise move in $\LL(G,\omega_e)$ comes from the corresponding counterclockwise Kauffman move on the associated state.
\end{proof}
\begin{lemma}\label{lem-prime-link}
    If $G$ is a prime
    (connected) link diagram, and $e$ is an edge of $G$, then the graph $\Gamma_{(G,\omega_e)}^{\mathrm{inv}}$ of invisible connected angular cycles of $(G,\omega_e)$ is connected.
\end{lemma}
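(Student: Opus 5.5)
The plan is to identify explicitly which angles can lie on invisible angular cycles for the Kauffman weight $\omega_e$, and then check connectivity of $\Gamma^{\mathrm{inv}}_{(G,\omega_e)}$ directly.

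\textbf{Step 1: the two face cycles are invisible and adjacent.} Since $\omega_e(f_1)=\omega_e(f_2)=0$, equation \eqref{eq.omega-and-g} forces $g(a)=0$ for every $g\in\LL(G,\omega_e)$ and every $a\in A_G(f_1)\cup A_G(f_2)$. Hence the face cycles $\dmg(f_1)$ and $\dmg(f_2)$ in $\dmg G$ are invisible connected angular cycles; they are non-negative because they are oriented coherently with $\dmg G$. Both pass through the vertex $\dmg(e)$, since $e$ is adjacent to both $f_1$ and $f_2$. So they are adjacent vertices of $\Gamma^{\mathrm{inv}}_{(G,\omega_e)}$.

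\textbf{Step 2 (main step): no other angle is invisible.} I would show that every angle $a$ not adjacent to $f_1$ or $f_2$ belongs to some Kauffman state $K$ relative to $e$. Then $\chi_K(a)=1$, so $a$ cannot lie on an invisible angular path.

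Let $a$ sit at the vertex $v$ and in the face $f$. Then $K\ni a$ exists if and only if the bipartite graph with vertex sets $V_G-\{v\}$ and $F_G-\{f_1,f_2,f\}$, with adjacency given by angles, has a perfect matching. By the remark following the definition of Kauffman states, $|V_G|=|F_G|-2$, so the two sides have equal cardinality. I would verify Hall's condition: for every nonempty $S\subseteq V_G-\{v\}$, the set of faces in $F_G-\{f_1,f_2,f\}$ adjacent to $S$ has cardinality at least $|S|$.

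The argument is an Euler-characteristic count on the subgraph spanned by $S$. A set $S$ of $4$-valent vertices is adjacent to at least $|S|+2$ faces. Equality can occur only if the region determined by $S$ is bounded by a $2$-edge cut, that is, a circle meeting the diagram in two points. Primeness excludes such cuts except the trivial ones. In those trivial cases I would check that at most two of $f_1,f_2,f$ are among the adjacent faces, or that the extra vertex $v$ compensates; this uses that $a$ is not adjacent to $f_1$ or $f_2$. This is Kauffman's classical argument for the existence of states, refined to include a prescribed angle. I expect this Hall/Euler bookkeeping, and in particular the exact way primeness is used, to be the main obstacle.

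\textbf{Step 3: conclude.} By Steps 1 and 2, every invisible angular cycle uses only angles in $A_G(f_1)\cup A_G(f_2)$. Hence it is supported on the union of the two cycles $\dmg(f_1)$ and $\dmg(f_2)$. A nonzero connected invisible cycle $c$ therefore contains an edge of $\dmg(f_1)$ or of $\dmg(f_2)$, and so shares a vertex $\dmg(e')$ with one of them. In $\Gamma^{\mathrm{inv}}_{(G,\omega_e)}$, $c$ is thus adjacent to $\dmg(f_1)$ or to $\dmg(f_2)$. Combined with Step 1, this shows that $\Gamma^{\mathrm{inv}}_{(G,\omega_e)}$ is connected.
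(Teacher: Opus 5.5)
Your route differs from the paper's and can be made to work, but as written its one substantive step is announced rather than proved. The paper argues by contradiction and looks at a single region. An invisible cycle $c$ sharing no vertex with $\dmg(f_1)$ or $\dmg(f_2)$ bounds a disk $D_1$ containing neither $f_1$ nor $f_2$. Invisibility gives $|F_{G_1}\setminus\{f_{\mathrm{ext}}\}|=|V_{G_1}|$, and Euler's formula with the degree count then forces exactly two edges of $G$ to leave $D_1$, so $G$ is a connected sum. You instead prove a stronger fact: every angle outside $A_G(f_1)\cup A_G(f_2)$ occurs in some Kauffman state, so the invisible angles are exactly those of $f_1$ and $f_2$. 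Granting this, your Steps 1 and 3 are correct and immediate. You also get a sharper picture of $\Gamma^{\mathrm{inv}}_{(G,\omega_e)}$, since every invisible cycle lives on $\dmg(f_1)\cup\dmg(f_2)$. The price is that the Euler count must be controlled for every vertex subset, not just for the vertices enclosed by one invisible cycle.

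That control is exactly what your Step 2 leaves open. You need the following: for a prime diagram and every nonempty $S\subsetneq V_G$, the set $N(S)$ of faces incident to $S$ satisfies $|N(S)|\ge |S|+3$. Since $v\notin S$ makes $S$ proper, this gives $|N(S)\setminus\{f_1,f_2,f\}|\ge |S|$, and Hall applies. No ``trivial cases'' remain, so the remark about $v$ compensating is unnecessary. Your sketch asserts the bound $|S|+2$ and the description of the equality case without argument, and that is the only place primeness enters.

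One clean way to supply it uses a checkerboard (Tait) graph $T$. Its edge set $E$ is the set of crossings. Let $r$ be the rank function of $M(T)$; then $M(T^*)=M(T)^*$ has rank $r^*(S)=|S|+r(E\setminus S)-r(E)$. Counting black and white faces incident to $S$ gives
\[
|N(S)| = r(S)+c(S)+r^*(S)+c^*(S) = |S|+c(S)+c^*(S)+\bigl(r(S)+r(E\setminus S)-r(E)\bigr),
\]
where $c(S),c^*(S)\ge 1$ are the numbers of components of $S$ in $T$ and $T^*$. For a prime diagram without loops, $T$ is loopless, bridgeless and $2$-connected. Indeed, a cut vertex, loop or bridge of $T$ would give a circle meeting $G$ in two points with crossings on both sides. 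Hence $M(T)$ is connected, and the last bracket is at least $1$ for every nonempty proper $S$. This is the missing content; once it is in place your argument is complete.
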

\begin{proof}
    Let $f_1,f_2$ be the faces adjacent to the edge $e$ as above, and let $\omega_e$ be the Kauffman weight relative to $e$.
    Since $\omega_e(f_i)=0$ for $i=1,2$, the angles of $f_i$ form an invisible angular cycle which we call $c_i$; moreover, since $f_1$ and $f_2$ share the adjacent edge $e$, $c_1$ is connected to $c_2$. Suppose there is an invisible angular cycle $c$ which is not connected to $c_1$ nor $c_2$. Suppose $G$ is embedded into a sphere $S$, then $c$ partitions $S$ into two disks, one of which contains neither $f_1$ nor $f_2$: denote this disk by $D_1$ and the other by $D_2$. Let $G_i$ be the subgraph of $G$ determined by the vertices of $G$ contained into $D_i$. Let $f_{\text{ext}}$ be the face of $G_1$ containing $D_2$; let $V_{\text{ext}}$ be the subset of vertices of $G_1$ that are adjacent to $f_{\text{ext}}$, and let $V_{\text{int}}:=V_{G_1}-V_{\text{ext}}$. Clearly $V_{G_2}$ is non-empty. Suppose $V_{G_1}$ is empty, then $G_1$ has a single face $f$, different from $f_1$ and $f_2$, but since $c$ is invisible we must have $\omega (f)=0$, absurd. Therefore $V_{G_1}$ is non-empty. Vertices of $V_{\text{ext}}$ can assume degree $2$, $3$ or $4$ in $G_1$: we denote with $V_{\text{ext}}^{(i)}$ the subset of $V_{\text{ext}}$ of vertices of degree $i$ in $G_1$. Given $v\in V_{\text{ext}}^{(i)}$, the number of edges joining $v$ to some vertex of $G_2$ is $\deg_{G}(v)-\deg_{G_1}(v)=4-i$. Since in any graph $2\left|E\right|=\sum_{v\in V}\deg(v)$, we know that
    \begin{center}
        $2\left |E_{G_1} \right |=2\left |V_{\text{ext}}^{(2)} \right | + 3\left |V_{\text{ext}}^{(3)} \right | + 4\left |V_{\text{ext}}^{(4)} \right |+ 4\left |V_{\text{int}} \right |$.
    \end{center}
    Moreover
    \begin{center}
        $\left | F_{G_1}-\{f_{\text{ext}} \} \right |-\left |E_{G_1}\right |+\left |V_{G_1} \right |=1$.
    \end{center}
    Since $c$ is invisible we must have
    \begin{center}
        $\left | F_{G_1}-\{f_{\text{ext}} \} \right |=\left |V_{G_1}\right |$.
    \end{center}
    From these equalities, we deduce
    \begin{center}
        $2\left |V_{\text{ext}}^{(2)} \right | + \left |V_{\text{ext}}^{(3)} \right |=2$
    \end{center}
     which means $\left |V_{\text{ext}}^{(2)} \right |=1$ and $\left |V_{\text{ext}}^{(3)} \right |=0$ or $\left |V_{\text{ext}}^{(2)} \right |=0$ and $\left |V_{\text{ext}}^{(3)} \right |=2$. This implies that $G$ is a connected sum, absurd.
\end{proof}

\begin{corollary}[Kauffman's Clock Theorem for prime links]
If $G$ is a prime link diagram, then $\mathrm{Kauff}(G,e)$ is a graded distributive lattice.
\end{corollary}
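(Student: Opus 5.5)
The plan is to transport the problem along the isomorphism $\chi\colon \mathrm{Kauff}(G,e)\to\LL(G,\omega_e)$ proved just above and then to apply Corollary~\ref{cor-distributive-lattice}. Since $\chi$ is an isomorphism of directed graphs, it suffices to show that $\LL(G,\omega_e)$ is the directed graph of a graded distributive lattice. Corollary~\ref{cor-distributive-lattice} has three hypotheses: $G$ is connected, $(G,\omega_e)$ has nilpotency degree zero, and $\Gamma^{\mathrm{inv}}_{(G,\omega_e)}$ is connected.

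First, connectedness of $G$ holds because a prime link diagram is by convention connected, as in Lemma~\ref{lem-prime-link}.

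Second, nilpotency degree zero requires an invisible angular cycle. Let $f_1$ be a face adjacent to the marked edge $e$. The angles of $f_1$ form the cycle $\dmg(f_1)$ in $\dmg G$, which is a non-zero non-negative cycle bounding a face. Its value is $\lambda_{\omega_e}(\dmg(f_1))=\omega_e(f_1)=0$, by the description of weights in Remark~\ref{rem.omega.as.function}, or directly from \eqref{eq.omega-and-g}: any compatible $g$ sums to $0$ on $A_G(f_1)$. Hence $m_{\omega_e}=0$. The same cycle shows that $\Gamma^{\mathrm{inv}}$ is non-empty.

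Third, connectedness of $\Gamma^{\mathrm{inv}}_{(G,\omega_e)}$ is exactly Lemma~\ref{lem-prime-link}. With all three hypotheses in place, Corollary~\ref{cor-distributive-lattice} gives that $\LL(G,\omega_e)$ is a distributive lattice isomorphic to $\bmsp[f_-]$, where $f_-$ is the minimum. Finiteness comes from Proposition~\ref{prop.bms-finiteness}, and the grading is given by $d_{\mathrm{tot}}$, as in Theorem~\ref{thm.connected.components}.

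Finally, a small point remains. The isomorphism $\chi$ matches arrows, so the order on $\mathrm{Kauff}(G,e)$ generated by counterclockwise moves agrees with that of $\LL(G,\omega_e)$. The lattice structure and grading therefore transfer, and $\mathrm{Kauff}(G,e)$ is a graded distributive lattice. The one edge case is $\LL(G,\omega_e)=\emptyset$. This cannot occur, since the Kauffman states of a connected diagram are nonempty by Kauffman's classical existence result; in any case the statement of Corollary~\ref{cor-distributive-lattice} handles a nonempty component. The only substantive input is Lemma~\ref{lem-prime-link}, which is already proved, so no real obstacle is expected beyond checking that the face cycle is invisible.
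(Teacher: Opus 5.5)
Your proposal is correct and takes essentially the same approach as the paper: transport along the directed-graph isomorphism $\chi\colon\mathrm{Kauff}(G,e)\to\LL(G,\omega_e)$, then apply Corollary~\ref{cor-distributive-lattice}, with Lemma~\ref{lem-prime-link} supplying connectedness of $\Gamma^{\mathrm{inv}}_{(G,\omega_e)}$. Your explicit check that the face cycle $\dmg(f_1)$ is invisible (so the nilpotency degree is zero) and your remark on nonemptiness just spell out points the paper leaves implicit.
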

\begin{proof}
   It follows by combining  Lemma~\ref{lem-prime-link} and  Corollary~\ref{cor-distributive-lattice}.
\end{proof}

\section{Quiver representations associated to BMS States}\label{sec-quiver-representations}
In this section we associate to every decorated planar graph $(G,\omega)$ a quiver with potential and a Jacobian algebra, and to every BMS state $\xi$ of $(G,\omega)$
a module $\mathcal{M}(\xi)$ over such Jacobian algebra (Proposition~\ref{prop.M-rep-of-QG}).
We then restrict our attention to the case where $\omega$ is a characteristic weight (Definition \ref{def.char-weight}). In this case, we prove that, for every BMS state $\xi$, the representation $\mathcal{M}(\xi)$ is indecomposable (Theorem~\ref{thm.indecomposability}) and the lattice of its subrepresentations is isomorphic to the lattice formed by the plus-subobjects of $\xi$ (Theorem~\ref{thm.plus-subobjects-subrepr}). If $G$ is the diagram of a prime knot or link and $\omega$ is the Kauffman weight of $G$ and the BMS state is the maximal Kauffman state, then these results are a generalization of \cite[Theorem~6.9]{BM}.

\subsection{Quivers with potential}
We start by recalling basic facts about quivers, quiver representations, quivers with potential, and Jacobian algebras (standard references are \cite{CB,S,ASS,DWZI,DWZII}; see also \cite[Section~2.1]{CLS}). A quiver $Q$ is a quadruple $Q=(Q_0,Q_1,s,t)$ where $Q_0$ is the finite set of its vertices, $Q_1$ the finite set of its arrows and $s,t:Q_1\rightarrow Q_0$ are two functions which provide the orientation of every arrow: $\alpha\in Q_1$ is oriented from its starting vertex $s(\alpha)$ towards its terminal vertex $t(\alpha)$. 

A path of $Q$ is a sequence $\pi=(\alpha_1,\cdots, \alpha_\ell)$ of arrows of $Q$ such that $s(\alpha_{i+1})=t(\alpha_i)$, for all $i=1,\cdots, \ell-1$ and $\ell$ is called the \emph{length} of the path $\pi$. The starting vertex of $\pi$ is $s(\pi):=s(\alpha_1)$ and its terminal vertex is $t(\pi):=t(\alpha_\ell)$, and we use either the notation $\pi=(s(\pi)\mid\alpha_1,\cdots,\alpha_\ell\mid t(\pi))$ or simply $\pi=\alpha_1\cdots\alpha_\ell$. If $s(\alpha_1)=t(\alpha_\ell)$, then we say that $\pi$ is a cyclic path of $Q$. To every vertex $i\in Q_0$ it is associated the lazy path $e_i=(i\mid\mid i)$, i.e. the path starting and ending at vertex $i$ and having length zero. Two paths $\pi=(i\mid \alpha_1,\cdots, \alpha_\ell\mid j)$ and $\pi'=(h\mid \beta_1,\cdots,\beta_{\ell'}\mid k)$ are multiplied as
\[
\pi\pi'=\delta_{j,h}(i\mid \alpha_1,\cdots,\alpha_\ell,\beta_1,\cdots, \beta_{\ell'}\mid k),
\]
where $\delta_{j,h}=1$ if $j=h$, $0$ otherwise.

Following \cite{DWZI}, we now recall the definition of a quiver with potential and its Jacobian algebra. Let $R$ be the commutative $\C$-algebra with basis the lazy paths $e_i$, $i\in Q_0$ with multiplication as above (thus $e_ie_j=\delta_{i,j}e_i$) and let $A$ denote the $R$-bimodule with $\C$-basis the set $Q_1$ of arrows of $Q$ and $R$-bimodule structure $e_iAe_j=\oplus_{\alpha\in Q_1:\, s(\alpha)=i,\,t(\alpha)=j\,}\C \alpha$. The path algebra of $Q$ (see \cite[Def.~2.1]{DWZI}) is the graded tensor algebra $R\langle A\rangle=\bigoplus_{d=0}^\infty A^d$.

For each $i,j\in Q_0$ the component $A^d_{i,j}=e_i A^d e_j$ is the span of all paths of length $d$ starting in $i$ and ending in $j$. We denote by $A^d_{\rm cyc}$ the span of all cyclic paths of length $d$. The complete path algebra of $Q$ (see \cite[Def.~2.2]{DWZI}) is $R\langle\langle A\rangle\rangle=\prod_{d=0}^\infty A^d$, with $\mathfrak{m}$-adic topology given by the two-sided ideal $\mathfrak{m}=\prod_{d=1}^\infty A^d$.
A \emph{potential} on $Q$ is an element of the closed vector subspace $R\langle\langle A\rangle\rangle_{\rm cyc}=\prod_{d=0}^\infty A^d_{\rm cyc}$; thus a potential is a (possibly infinite) linear combination of cyclic paths of $Q$. Given an element $\xi\in A^\ast$ of the linear dual of $A$, the cyclic derivative associated to $\xi$ is the continuous function $\partial_\xi: R\langle\langle A\rangle\rangle_{\rm cyc}\rightarrow R\langle\langle A\rangle\rangle$ acting on cyclic paths by
\[
\partial_\xi(\alpha_1\cdots\alpha_d)=\sum_{k=1}^d \xi(\alpha_k)\alpha_{k+1}\cdots\alpha_d\alpha_1\cdots\alpha_{k-1}
\]
Given a potential $S\in R\langle\langle A\rangle\rangle_{\rm cyc}$, its \emph{Jacobian ideal} $J(S)$ is the closure of the two-sided ideal generated by the elements $\partial_\xi(S)$, for all $\xi\in A^\ast$ and the \emph{Jacobian algebra} of $S$ is the quotient $\mathcal{P}(Q,S)=R\langle\langle A\rangle\rangle/J(S)$. If the quiver $Q$ has no loops, i.e. $A^1_{i,i}=0$ for all $i\in Q_0$, and $S$ is a sum of cyclic paths which are not cyclically equivalent then the pair $(Q,S)$ is called a quiver with potential (see \cite[Definition~4.1]{DWZI}).

\subsection{The quiver with potential of $(G,\omega)$}

We now associate to $(G,\omega)$ a quiver with potential and a Jacobian algebra. The quiver of $(G,\omega)$ is simply $Q=\dmg G$, its directed medial graph. Recall that this quiver has the following properties:  it has no loops, for every vertex $i\in Q_0$ there are exactly two arrows $\alpha_1\neq \alpha_2\in Q_1$  such that $t(\alpha_1)=t(\alpha_2)=i$ and exactly two arrows $\beta_1\neq \beta_2\in Q_1$ such that $s(\beta_1)=s(\beta_2)=i$, $\mid Q_1\mid=2\mid Q_0\mid$, and every two vertices are joined by a path (see Remark~\ref{rem.bounded}).

Recall also that to every vertex $v$ of $G$ is associated a clockwise cycle of $Q$ and it is denoted $\dmg G(v)$; similarly, to every face $f$ of $G$ it is associated a counterclockwise oriented cycle of $Q$ and it is denoted $\dmg G(f)$ \footnote{Strictly speaking $\dmg G(v)$ and $\dmg G(f)$ are elements of the trace space $R\langle\langle A\rangle\rangle/\{R\langle\langle A\rangle\rangle, R\langle\langle A\rangle\rangle\}$ since we have not fixed their initial and terminal vertices. By \cite[Prop.~3.5]{DWZI}, the Jacobian algebra does not depend on this and hence we prefer to be slightly vague here in order to simplify the notation.}.   From now on we use the notation above $R$, $A$, $R\langle A\rangle$ and $R\langle\langle A\rangle\rangle$ for the directed medial graph $Q=\dmg G$.

\begin{notation}
Assume $G$ to be connected. Under this assumption, by Remark \ref{rem.omega.as.function}, a weight function $\omega$ for $G$ can be identified with a function $\omega\colon V_G\sqcup F_G\to \N$. The support of $\omega$, seen as a function on $V_G\sqcup F_G$, is $\mathrm{supp}(\omega)=\{x\in V_G\sqcup F_G\mid\, \omega(x)\neq 0\}$. We denote by $\mathbb{C}[\dmg G]^{\mathrm{cyc}}_{\omega=0}$ the subalgebra of $R\langle\langle A\rangle\rangle_{\rm cyc}$ generated by the elements $\dmg G(x)$ where $x\in V_G\sqcup F_G$ is such that $\omega(x)=0$. We call the elements of $\mathbb{C}[\dmg G]^{\mathrm{cyc}}_{\omega=0}$ the \emph{phantom potentials} for $(G,\omega)$.
\end{notation}

\begin{definition}
Let $(G,\omega)$ be a connected decorated planar graph. We denote by $\xi_\omega$ the least common multiple of the natural numbers $\omega(x)$ with $x$ ranging over the support of $\omega$, and by $p_\omega$ the function $p_\omega\colon \mathrm{supp}(\omega)\rightarrow \N$ given by
\[
p_\omega(x)=\frac{\xi_\omega}{\omega(x)}
\]
The \emph{canonical potential} associated with the weight $\omega$ is the potential
\[
S_{\omega}=  \sum_{v\in V_G\cap \mathrm{supp} (\omega)}\frac{1}{p_{\omega}(v)}\, \dmg G(v)^{p_{\omega}(v)} -\sum_{f\in F_G\cap \mathrm{supp}(\omega)}\frac{1}{p_{\omega}(f)}\, \dmg G(f)^{p_{\omega}(f)}.
\]
A potential $S$ is called $\omega$-\emph{admissible} if $S-S_\omega$ is a phantom potential.
\end{definition}

\begin{example}
If $G$ is a connected planar graph, the potential
\[
S_G=\sum_{v\in V_G}\dmg G(v) - \sum_{f\in F_G}\dmg G(f)
\]
is $\omega$-admissible for every characteristic weight $\omega\colon V_G\sqcup F_G\to \{0,1\}$. Indeed, in this case, $p_\omega(x)=1$ for every $x\in {\rm supp}(\omega)$.
\end{example}

 \subsection{Quiver representations from BMS states} 
 A representation of a quiver $Q=(Q_0,Q_1,s,t)$ is a pair $M=((M_i)_{i\in Q_0}, (M_\alpha)_{\alpha\in Q_1})$ where each $M_i$ is a finite-dimensional $\C$-vector space and each $M_\alpha\colon M_{s(\alpha)}\rightarrow M_{t(\alpha)}$ is a $\C$-linear map. The set of vertices $i$ such that $M_i\neq 0$ is called the \emph{support} of $M$.  The $Q_0$-graded vector space $M=\bigoplus_{i\in Q_0}M_i$ is endowed with a natural structure of $R\langle A\rangle$-module, and we do not distinguish between $Q$-representations and $R\langle A\rangle$-modules (see, for instance, \cite[Theorem~III.1.6]{ASS}). 
 
  Given a path $p=\alpha_1\alpha_2\cdots\alpha_\ell$ of $Q$, we denote by $M_p:=M_{\alpha_\ell}\circ M_{\alpha_{\ell-1}}\circ\cdots\circ M_{\alpha_2}\circ M_{\alpha_1}$ the composition of the corresponding linear maps. For every element $a\in R\langle A\rangle$ of the path algebra, we denote by $M_a$ the corresponding linear combination of the linear maps associated with the paths appearing in $a$.
Given an element of the path algebra $a\in R\langle A\rangle$ we denote by $am\in M$ the image of $m\in M$ under the action of $a$ and denote by $aM$ the vector subspace of all such images.
  A representation $M$ of $Q$ is called nilpotent if there exists $N\gg0$ such that $\mathfrak{m}^N M=0$.
 Equivalently, a representation $M$ of $Q$ is called nilpotent if there exists $N>0$ such that $M_p=0$ for all paths $p$ of length $>N$.
 
 A module over the complete path algebra $R\langle\langle A\rangle\rangle$ is a nilpotent (finite-dimensional) representation of $Q$. A module over the Jacobian algebra $\mathcal{P}(Q,S)$ is a representation $M$ of $R\langle\langle A\rangle\rangle$ such that $\partial_\alpha(S)M=0$ for every $\alpha\in Q_1$. In particular, the simple $R\langle\langle A\rangle\rangle$-modules are one-dimensional and nilpotent. For every vertex $e\in Q_0$ we denote by $S_e$ the simple $R\langle\langle A\rangle\rangle$-module whose support is just the vertex $e$.

 We recall that a morphism between two $Q$-representations $N$ and $M$ is a collection of linear maps $(f_i:N_i\rightarrow M_i)_{i\in Q_0}$ such that $M_\alpha\circ f_{s(\alpha)}=f_{t(\alpha)}\circ N_\alpha$ for every arrow $\alpha\in Q_1$. This notion defines the  abelian category of $Q$-representations, in which surjectivity, injectivity, kernels,  cokernels and direct sum are defined vertex by vertex. In particular, a representation is called \emph{indecomposable} if it is not the direct sum of two non-trivial subrepresentations.

We are now ready to associate to any BMS state a module over the Jacobian algebra $\mathcal{P}(\dmg G, S_\omega)$. We will make use of two linear maps that we call the \emph{plus morphism} and the \emph{minus morphism} which are defined  respectively as follows:
\begin{equation*}
	       \begin{aligned}
	  & \begin{tabular}{cccc}
	    $(+)_n: $& $\mathbb C^n$ & $\longrightarrow$ & $\mathbb C^{n+1}$ \\
	    &$(x_1,\dots,x_n)$&$\mapsto$& $(x_1,\dots,x_n,0)$
	    \end{tabular} \\
	     &\begin{tabular}{cccc}
	    $(-)_n: $& $\mathbb C^n$ & $\longrightarrow$ & $\mathbb C^{n-1}$ \\
	    &$(x_1,\dots,x_n)$&$\mapsto$& $(x_2,\dots,x_n)$
	    \end{tabular}
	    \end{aligned}
\end{equation*}
Taking the standard bases in the domain and codomain, the plus and minus morphisms are respectively represented by the following matrices:
\[
(+)_n = \left ( \begin{array}{c} \id_n\\
      \hline
 \mathbb{0}_{1\times n}
\end{array} \right )
\quad \text{and} \quad
(-)_n = \left ( \begin{array}{c|c}
      \mathbb{0}_{(n-1)\times 1} &
      \id_{n-1}
\end{array} \right ).
\]
where $\id_n$ denotes the $n\times n$ identity matrix and $\mathbb{0}_{m\times n}$ denotes the zero matrix of size $m\times n$.
Since the  dimension of the domain of these two morphisms will always be clear from the context, we will omit the subscripts and simply write $(+)$ instead of $(+)_n$ and $(-)$ instead of $(-)_n$. For example, the morphism $(+)_{n+1} \circ (+)_n$ will simply be written as $(+)^2$.
We formally extend the definition by declaring that $(+)_0$ and $(-)_0$ are zero morphisms and that, for  every $n\ge 1$, $(+)_n^0=(-)_n^0=\id_n$. We denote by $J_n$ (or simply $J$) the nilpotent upper triangular Jordan block of size $n$.

The following useful lemma is straightforward.
\begin{lemma}
For every $n\ge 1$
\begin{equation*}
(+)_{n-1}(-)_{n} =J_n= (-)_{n+1}(+)_n.
\end{equation*}
Given $h,n\ge 1$, for every $c,k\ge 0$, the $h\times n$ matrix representing $(+)^c(-)^k$ is
\begin{equation*}
(+)^c(-)^k=\begin{pmatrix}\mathbb{0}_{(h-c)\times k}&\id_{r}\\\mathbb{0}_{c\times k}&\mathbb{0}_{c\times (n-k)}\end{pmatrix}
\end{equation*}
where $r=\textrm{rk}((+)^c(-)^k)=n-k=h-c$. In particular, for every $m\ge 0$, assuming that $(+)^m(-)^m$ has size $n\times n$, we get
\begin{equation*}
(+)^m(-)^m = J^m=\begin{pmatrix}\mathbb{0}_{(n-m)\times m}&\id_{n-m}\\\mathbb{0}_{m\times m}&\mathbb{0}_{m\times (n-m)}\end{pmatrix}
\end{equation*}
Moreover $(+)_n^c=\begin{pmatrix}\id_{n}\\\mathbb{0}_{c\times n}\end{pmatrix}$ and $(-)_n^k=\begin{pmatrix}\mathbb{0}_{n\times k}&\id_{n}\end{pmatrix}$.

\end{lemma}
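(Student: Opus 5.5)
The plan is to compute everything directly in the standard bases, which reduces the lemma to bookkeeping with matrix units. Write $E_{ij}$ for the matrix unit of the appropriate size. In these terms $(+)_n=\sum_{i=1}^n E_{ii}$ viewed as an $(n+1)\times n$ matrix, and $(-)_n=\sum_{i=1}^{n-1}E_{i,i+1}$ viewed as an $(n-1)\times n$ matrix, since $(-)_n$ sends $e_{i+1}\mapsto e_i$ and kills $e_1$.

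First I prove the identity $(+)_{n-1}(-)_n=J_n=(-)_{n+1}(+)_n$.
\begin{itemize}
\item \emph{First equality.} We have $(+)_{n-1}(-)_n(x_1,\dots,x_n)=(x_2,\dots,x_n,0)$. This is exactly the action of the upper triangular nilpotent Jordan block $J_n=\sum_{i=1}^{n-1}E_{i,i+1}$.
\item \emph{Second equality.} We have $(-)_{n+1}(+)_n(x_1,\dots,x_n)=(-)_{n+1}(x_1,\dots,x_n,0)=(x_2,\dots,x_n,0)$, which is the same map.
\item \emph{Edge case $n=1$.} The conventions $(+)_0=(-)_0=0$ make both sides the zero $1\times1$ matrix, which agrees with $J_1=0$.
\end{itemize}

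Next come the powers. By induction on $c$, $(+)^c_n$ appends $c$ zeros, so its matrix is $\begin{pmatrix}\id_n\\ \mathbb{0}_{c\times n}\end{pmatrix}$. Likewise $(-)^k_n$ deletes the first $k$ coordinates, giving $\begin{pmatrix}\mathbb{0}_{(n-k)\times k}&\id_{n-k}\end{pmatrix}$. Here I read the paper's ``$(-)_n^k$'' block as having identity block of size equal to the number of rows. The composite $(+)^c(-)^k$ therefore sends $(x_1,\dots,x_n)$ to $(x_{k+1},\dots,x_n,0,\dots,0)$, with $c$ trailing zeros. Its target dimension is $h=n-k+c$, so $r:=n-k=h-c$. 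The matrix has the identity block $\id_r$ in rows $1..r$ and columns $k+1..n$, and zeros elsewhere, which is the displayed block form. Its rank is $r$.

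For the case $c=k=m$ with square size $n$, the same description gives $(x_{m+1},\dots,x_n,0,\dots,0)$. This is $J^m$, either by iterating the first identity or by noting that $J^m=\sum_i E_{i,i+m}$.

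The only thing requiring care is consistency of the degenerate conventions when some dimension hits $0$, i.e.\ when $k=n$ and $r=0$. In that case the formula still holds with empty identity block, giving the zero matrix. This matches composing through $(-)_1$ into $\mathbb{C}^0$ followed by the zero maps $(+)_0$.
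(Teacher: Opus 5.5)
Your proof is correct and takes the same approach as the paper, which simply calls the lemma straightforward: a direct coordinate computation of the plus and minus maps and their composites. Your reading of the formula for $(-)_n^k$ is also the right one. As printed, $\begin{pmatrix}\mathbb{0}_{n\times k}&\id_{n}\end{pmatrix}$ has $n+k$ columns, so it is the matrix of $(-)^k$ with domain $\C^{n+k}$, and after reindexing it agrees with your $\begin{pmatrix}\mathbb{0}_{(n-k)\times k}&\id_{n-k}\end{pmatrix}$ on $\C^n$.
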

An immediate consequence of the first identity above is that any product of $(+)$ and $(-)$ morphisms can be shuffled in such a way that all the $(+)$'s appear on the left and all the $(-)$'s appear on the right.

We are now ready to give the main definition of this section.
\begin{definition}\label{Def:BMSModule}
Given a BMS state $\xi=(f_+,f_-,d)$ for $(G,\omega)$, we denote by
\[
\M(\xi)=((M_e)_{e\in (\dmg G)_0}, (M_\alpha)_{\alpha\in (\dmg G)_1})
\]
the representation of $\dmg G$ defined as follows:
\begin{align}
M_e&=\C^{d(e)},\\
M_\alpha&=(+)^{f_+(\alpha)}(-)^{f_-(\alpha)}:\C^{d(s(\alpha))}\rightarrow \C^{d(t(\alpha))}.
\end{align}
If $f_-(\alpha)\le d(s(\alpha))$ and $f_+(\alpha)\le d(t(\alpha))$, then the BMS relation implies
\[
r_\alpha:=d(s(\alpha))-f_-(\alpha)=d(t(\alpha))-f_+(\alpha)\ge 0,
\]
and we interpret $(+)^{f_+(\alpha)}(-)^{f_-(\alpha)}$ as the matrix
\[
\begin{pmatrix}
\mathbb{0}_{r_\alpha\times f_-(\alpha)} & \id_{r_\alpha}\\
\mathbb{0}_{f_+(\alpha)\times f_-(\alpha)} & \mathbb{0}_{f_+(\alpha)\times r_\alpha}
\end{pmatrix}.
\]
Otherwise, we set $(+)^{f_+(\alpha)}(-)^{f_-(\alpha)}=0$.
\end{definition}

\begin{remark}
    If $G$ is a connected link diagram and $\omega=\omega_e$ is the Kauffman weight of $G$ relative to some edge $e$ (see Definition~\ref{def.kauffman-weight}), then Definition~\ref{Def:BMSModule} becomes the state module defined in \cite[Definition~6.1]{BM}.
\end{remark}

 \begin{lemma}\label{Lem:ShortExactSequence}
  Let $\xi$ and $\xi'$ be two BMS states of $(G,\omega)$  such that $\textrm{mov}_e(\xi')=\xi$ for some vertex $e\in\dmg G_0$. Then there is a short exact sequence of quiver representations:
 \[
 \xymatrix{
 0\ar[r]& M(\xi')\ar^{\iota_{\xi'}}[r]& M(\xi)\ar^\pi[r]& S_{e}\ar[r]& 0
 }
 \]
 \end{lemma}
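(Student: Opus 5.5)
The plan is to write down the maps $\iota_{\xi'}$ and $\pi$ explicitly and check, arrow by arrow, that they are morphisms of representations. Write $\xi'=(f'_+,f_-,d')$. By Definition~\ref{def.moc.bms},
\[
\xi=\mathrm{mov}_e(\xi')=(f'_++\delta\chi_e,\;f_-,\;d'+\chi_e).
\]
So $d$ and $d'$ agree except at $e$, where $d(e)=d'(e)+1$. Likewise $f_+$ and $f'_+$ agree except at the four angles around $e$: on the two arrows $\alpha_1,\alpha_3$ with target $e$ we have $f_+=f'_++1$, and on the two arrows $\alpha_2,\alpha_4$ with source $e$ we have $f_+=f'_+-1$.

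Define $\iota_{\xi'}$ vertexwise:
\begin{itemize}
\item at every vertex $x\neq e$ it is the identity of $\C^{d(x)}=\C^{d'(x)}$;
\item at $e$ it is the plus morphism $(+)_{d'(e)}\colon\C^{d'(e)}\to\C^{d'(e)+1}$, the inclusion of the first $d'(e)$ coordinates.
\end{itemize}
Define $\pi$ as zero at all vertices $x\neq e$, and at $e$ as the projection $\C^{d'(e)+1}\to\C$ onto the last coordinate. Then exactness holds vertexwise:
\begin{itemize}
\item $\iota_{\xi'}$ is injective at every vertex;
\item $\pi$ is surjective onto the one-dimensional space at $e$;
\item at $e$, the kernel of $\pi$ is exactly the image of $(+)$;
\item at $x\neq e$, both the image of $\iota_{\xi'}$ and the kernel of $\pi$ are all of $\C^{d(x)}$.
\end{itemize}

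The substantive step is checking that $\iota_{\xi'}$ commutes with every arrow $\alpha$. There are three cases; since $\dmg G$ has no loops, no arrow has both source and target at $e$.
\begin{itemize}
\item If $\alpha$ is not incident to $e$, then $f_+(\alpha)=f'_+(\alpha)$ and the dimensions at both ends are unchanged, so $M_\alpha=M'_\alpha$ and the square commutes trivially.
\item If $t(\alpha)=e$, we need $M_\alpha=(+)\circ M'_\alpha$. This reads
\[
(+)^{f'_+(\alpha)+1}(-)^{f_-(\alpha)}=(+)\,(+)^{f'_+(\alpha)}(-)^{f_-(\alpha)},
\]
which holds formally.
\item If $s(\alpha)=e$, we need $M_\alpha\circ(+)=M'_\alpha$. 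This reads
\[
(+)^{f'_+(\alpha)-1}(-)^{f_-(\alpha)}(+)=(+)^{f'_+(\alpha)}(-)^{f_-(\alpha)}.
\]
It follows from the commutation $(-)_{n+1}(+)_n=J_n=(+)_{n-1}(-)_n$ in the preceding lemma, applied repeatedly to move the single $(+)$ to the far left through the block $(-)^{f_-(\alpha)}$. The case $f_-(\alpha)=0$ is immediate.
\end{itemize}

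I expect the main obstacle to be bookkeeping for the degenerate cases of Definition~\ref{Def:BMSModule}, where the matrix is set to zero when $f_-(\alpha)>d(s(\alpha))$ or $f_+(\alpha)>d(t(\alpha))$. The identities above are formal, so one must check that both sides fall into the same case. The key observation is that $r_\alpha=d(s(\alpha))-f_-(\alpha)=d(t(\alpha))-f_+(\alpha)$ takes the same value for $\xi$ and for $\xi'$: the move changes $d$ at $e$ and $f_+$ at the arrows into and out of $e$ by compensating amounts. Hence the condition "$r_\alpha\ge0$ and $f_-(\alpha)\le d(s(\alpha))$" holds for $\xi$ exactly when it holds for $\xi'$. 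In the nondegenerate case I would verify the identity directly on explicit matrices of rank $r_\alpha$, using the block form from the lemma. In the degenerate case both sides are zero.

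Finally, the cokernel of $\iota_{\xi'}$ is one-dimensional and supported only at $e$. Since $\dmg G$ has no loops, every arrow acts by zero on it, so the cokernel is the simple module $S_e$. This gives the short exact sequence of the statement.
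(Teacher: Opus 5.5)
Your $\iota_{\xi'}$ is the same map as in the paper, and the paper's proof stops at ``it is straightforward to check'' the commutation, so your arrow-by-arrow check is the intended argument. However, the ``key observation'' you use for the degenerate cases of Definition~\ref{Def:BMSModule} is false for arrows leaving $e$.

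If $s(\alpha)=e$, then $d(s(\alpha))=d'(e)+1$ while $f_-(\alpha)$ is unchanged. Equivalently, $f_+(\alpha)=f'_+(\alpha)-1$ while $d(t(\alpha))$ is unchanged. So $r_\alpha(\xi)=r_\alpha(\xi')+1$. The compensation you describe only happens for arrows entering $e$, and there your argument is correct. As a result, the situation $r_\alpha(\xi)=0$, $r_\alpha(\xi')=-1$ falls into neither of your two cases. There $M_\alpha$ is given by the block formula while $M'_\alpha$ is zero by convention, and there is no ``explicit matrix of rank $r_\alpha(\xi')$'' to compare with. This case does occur. It already arises in the first move $(g,g,\vec{0})\to(\mathrm{mov}_e(g),g,\chi_e)$, at any arrow $\alpha$ leaving $e$ with $g(\alpha)=1$. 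For a characteristic weight this always holds, since $e$-movability forces $g(\alpha)\geq 1$.

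The repair is short. For $s(\alpha)=e$, the product $M_\alpha\circ(+)$ is just $M_\alpha$ with its last column deleted. Split by the value of $r_\alpha(\xi)$:
\begin{itemize}
\item If $r_\alpha(\xi)\geq 1$: $M_\alpha$ has an identity block of size $r_\alpha(\xi)$, $f_+(\alpha)$ zero rows below it and $f_-(\alpha)$ zero columns on its left. Deleting the last column leaves the same shape with identity block of size $r_\alpha(\xi)-1$, $f_+(\alpha)+1=f'_+(\alpha)$ zero rows and $f_-(\alpha)$ zero columns. That is exactly $M'_\alpha$.
\item If $r_\alpha(\xi)=0$: the block matrix has no identity block, so $M_\alpha=0$ and $M_\alpha\circ(+)=0=M'_\alpha$.
\item If $r_\alpha(\xi)<0$: both sides vanish by convention.
\end{itemize}
With this correction your proof is complete. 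The vertexwise exactness and the identification of the cokernel with $S_e$ are fine as written.
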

 \begin{proof}
 Let us use the shorthand $N=M(\xi')$ and $M=M(\xi)$.
 By definition, $N_{e'}=M_{e'}=\C^{d(e')}$ for $e'\neq e$ and $N_e=\C^{d(e)-1}$.
     We define $\iota_{\xi'}$ as follows. For every $e'\in (\dmg G)_0$ we define the linear map $\iota_{\xi'}:N_{e'}\rightarrow M_{e'}$ as follows:
     \[
     \iota_{\xi'}=\left\{
     \begin{array}{cc}
     \id_{d(e')}&\textrm{if }e'\neq e\\
     (+)&\textrm{otherwise}.
     \end{array}
     \right.
     \]
     It is straightforward to check that $M_\alpha\circ\iota_{s(\alpha)}=\iota_{t(\alpha)}\circ N_\alpha$ for every arrow $\alpha$.
 \end{proof}

\begin{lemma}\label{Lem:Nilpotent}
For any BMS state $\xi$, the quiver representation $\M(\xi)$ is nilpotent.
\end{lemma}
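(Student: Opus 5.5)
We argue by induction on the total dimension $d_{\mathrm{tot}}=\sum_{e\in E_G}d(e)$ of the BMS state $\xi=(f_+,f_-,d)$. The plan is to show that $\M(\xi)$ is an iterated extension of simple modules $S_e$, using the short exact sequences of Lemma~\ref{Lem:ShortExactSequence}. Extensions of nilpotent representations are nilpotent, and the argument will in fact give $\mathfrak{m}^{d_{\mathrm{tot}}}\M(\xi)=0$.

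\emph{Base case.} If $d_{\mathrm{tot}}=0$, then every $M_e=\C^{0}$, so $\M(\xi)=0$, which is trivially nilpotent.

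\emph{Finding the smaller state.} Assume $d_{\mathrm{tot}}=n+1$. I would reuse verbatim the first step of the proof of Proposition~\ref{prop.weak-is-strong}. By Lemma~\ref{lemma.exists-e-part2} and the BMS condition $d_{\mathrm{inv}}=0$, there is an edge $e$ with $d(e)>0$ such that $f_+$ is positive on both incoming angles at $e$. Then
\[
\xi'=(\mathrm{mov}_e^-(f_+),f_-,d-\chi_e)
\]
is a BMS state with $d'_{\mathrm{tot}}=n$. By Remark~\ref{rem.mov-back-and-forth}, $\mathrm{mov}_e(\xi')=\xi$. By induction, $\mathfrak{m}^{n}\M(\xi')=0$. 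Lemma~\ref{Lem:ShortExactSequence} then gives an exact sequence
\[
0\to \M(\xi')\xrightarrow{\iota}\M(\xi)\xrightarrow{\pi} S_e\to 0 .
\]

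\emph{Extension argument.} Every arrow acts by zero on the simple module $S_e$, since $\dmg G$ has no loops. Hence $\pi(\mathfrak{m}\M(\xi))=\mathfrak{m}S_e=0$, and so $\mathfrak{m}\M(\xi)\subseteq \iota(\M(\xi'))$. Because $\iota$ is a morphism of representations, it commutes with the action of paths. Therefore
\[
\mathfrak{m}^{n+1}\M(\xi)\subseteq \mathfrak{m}^{n}\,\iota(\M(\xi'))=\iota(\mathfrak{m}^{n}\M(\xi'))=0 .
\]
Equivalently, every path of length greater than $d_{\mathrm{tot}}$ acts by zero on $\M(\xi)$, which is the nilpotency statement.

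The only delicate point is the existence of the edge $e$ giving a BMS predecessor $\xi'$. This is exactly the content of the inductive step of Proposition~\ref{prop.weak-is-strong}, so it can be quoted rather than reproved. One should also check that Lemma~\ref{Lem:ShortExactSequence} applies to the pair $(\xi',\xi)$, but that lemma is stated for any pair with $\mathrm{mov}_e(\xi')=\xi$.
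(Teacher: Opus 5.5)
Your proof is correct and uses the same argument as the paper, which says only that repeated use of Lemma~\ref{Lem:ShortExactSequence} gives a filtration of $\M(\xi)$ with simple quotients, hence nilpotency. You spell out the two points the paper leaves implicit: the existence of a BMS predecessor $\xi'$ with $\mathrm{mov}_e(\xi')=\xi$, taken from the inductive step of Proposition~\ref{prop.weak-is-strong}, and why an extension of nilpotent modules is nilpotent; along the way you also obtain the explicit bound $\mathfrak{m}^{d_{\mathrm{tot}}}\M(\xi)=0$.
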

\begin{proof}
By repeatedly using Lemma~\ref{Lem:ShortExactSequence} we know that $\M(\xi)$ has a filtration into simple (nilpotent) representations. Thus $\M(\xi)$ itself is nilpotent.
\end{proof}

The following Lemma shows that the opposite implication of Lemma~\ref{Lem:ShortExactSequence} is also true. 
\begin{lemma}\label{Lem:SurjectiveMovable}
    Let $\xi$ be a BMS state for $(G,\omega)$. There exists a non-zero map $M(\xi)\rightarrow S_e$ if and only if $\xi$ is anti-$e$-movable.
\end{lemma}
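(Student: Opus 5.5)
The plan is to reduce the existence of a non-zero morphism $\M(\xi)\to S_e$ to a statement about the \emph{top} of $\M(\xi)$ at the vertex $e$. I read ``$\xi=(f_+,f_-,d)$ is anti-$e$-movable'' as the condition that $\mathrm{mov}_e^-(\xi)=(f_+-\delta\chi_e,f_-,d-\chi_e)$ is again a BMS state. This is the inverse of Definition~\ref{def.moc.bms}, in the spirit of Remark~\ref{rem.mov-back-and-forth}. First I unpack this condition. It means that $f_+$ is anti-$e$-movable, i.e. $f_+(\alpha_1),f_+(\alpha_3)\ge 1$ in the notation of Figure~\ref{fig-edge-adjacent-angles}, and that $d(e)\ge 1$. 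The condition $d_{\mathrm{inv}}=0$ is then automatic. Indeed, $d(e)\ge1$ together with $d_{\mathrm{inv}}=0$ shows that $e$ lies on no invisible cycle, so lowering $d$ at $e$ alone cannot affect $d_{\mathrm{inv}}$ (cf.\ the argument in Proposition~\ref{prop.weak-is-strong}).

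Next I characterize when a non-zero morphism exists. Since $S_e$ is one-dimensional and supported at $e$, a morphism $\phi\colon\M(\xi)\to S_e$ is just a linear functional $\phi_e\colon M_e=\C^{d(e)}\to\C$. The commutativity conditions for arrows leaving $e$ are vacuous, because $S_e$ has zero maps. For the two arrows $\dmg(\alpha_1),\dmg(\alpha_3)$ entering $e$, the conditions say that $\phi_e\circ M_{\alpha_i}=0$. Hence a non-zero morphism exists if and only if
\[
\mathrm{Im}\,M_{\alpha_1}+\mathrm{Im}\,M_{\alpha_3}\subsetneq \C^{d(e)}.
\]

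Then I compute these images from Definition~\ref{Def:BMSModule}. If $M_{\alpha}$ is given by the block matrix with $r_\alpha=d(e)-f_+(\alpha)$, its image is the span of the first $r_\alpha$ standard basis vectors $\langle \epsilon_1,\dots,\epsilon_{r_\alpha}\rangle$. If instead the convention sets $M_\alpha=0$, the image is zero. In both cases the image is contained in $\langle\epsilon_1,\dots,\epsilon_{d(e)-f_+(\alpha)}\rangle$. When the block form applies, this containment is an equality; in that case $f_+(\alpha)=0$ gives image all of $\C^{d(e)}$.

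Now both implications follow. If $\xi$ is anti-$e$-movable, then $d(e)\ge1$ and both images lie in $\langle\epsilon_1,\dots,\epsilon_{d(e)-1}\rangle$. So the projection onto the last coordinate $\epsilon_{d(e)}$ gives a non-zero morphism; it is exactly the map $\pi$ of Lemma~\ref{Lem:ShortExactSequence} for $\xi'=\mathrm{mov}_e^-(\xi)$.

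Conversely, suppose a non-zero morphism exists. Then $d(e)\ge 1$, and I must show that $f_+(\alpha_i)\ge1$ for $i=1,3$. Suppose instead that $f_+(\alpha_1)=0$. The BMS relation \eqref{eq.bms-state} gives $d(s(\alpha_1))=d(e)+f_-(\alpha_1)\ge f_-(\alpha_1)$, and $f_+(\alpha_1)=0\le d(e)$. So the block-matrix case applies with $r_{\alpha_1}=d(e)$, and $M_{\alpha_1}$ is surjective. That contradicts properness of the sum of the images.

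The only delicate point is this last step. There I have to check that the ``otherwise set to zero'' convention in Definition~\ref{Def:BMSModule} never applies when $f_+(\alpha)=0$. This is exactly where the BMS relation $d(t(\alpha))=d(s(\alpha))+f_+(\alpha)-f_-(\alpha)$ is used.
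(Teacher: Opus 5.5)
Your proof is correct and follows the paper's argument. The forward direction uses the surjection $\M(\xi)\to S_e$ of Lemma~\ref{Lem:ShortExactSequence}, which you build explicitly as projection onto the last coordinate, and the converse rests on $f_+(\alpha_k)=0$ forcing $\M(\xi)_{\alpha_k}$ to be surjective onto $\C^{d(e)}$. You also make explicit two points the paper leaves implicit: that anti-$e$-movability of $\xi$ must include $d(e)\ge 1$, so that $\mathrm{mov}_e^-(\xi)$ is a BMS state, and that the zero convention of Definition~\ref{Def:BMSModule} cannot apply when $f_+(\alpha)=0$.
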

\begin{proof}
    If $\xi$ is anti-$e$-movable then Lemma~\ref{Lem:ShortExactSequence} shows that there is a non-zero (necessarily surjective) map $M(\xi)\rightarrow S_e$ of quiver representations. 

    On the other hand let us suppose that there exists a vertex $e\in\dmg G_0$ and a non-zero morphism $\xymatrix{f:M(\xi)\ar@{->>}[r]& S_e}$. Let $\alpha_1$ and $\alpha_3$ be the two arrows of $\dmg G$ ending in $e$ (as in Figure~\ref{fig-edge-adjacent-angles}). Let $\xi=(f_+,f_-,d)$. We claim that $f_+(\alpha_1)$ and $f_+(\alpha_3)$ are both positive. Indeed, suppose that $f_+(\alpha_k)=0$ for some $k\in\{1,3\}$. Then $M(\xi)_{\alpha_k}=(-)^{f_-(\alpha_k)}$ is surjective and hence $f_e\circ M(\xi)_{\alpha_k}$ is surjective; in particular, $f_e\circ M(\xi)_{\alpha_k}\neq 0= (S_e)_{\alpha_k}\circ f_{t(\alpha_k)}$. It follows that if $f_+(\alpha_k)=0$ then $f$ is not a morphism of quiver representations, against the hypothesis.
\end{proof}

\begin{proposition}\label{prop.M-rep-of-QG}
Let $(G,\omega)$ be a connected decorated planar graph, and let $S$ be an $\omega$-admissible potential for $G$. Then for every BMS state $\xi=(f_+,f_-,d)$ for $(G,\omega)$ the representation $\M(\xi)$ of $\dmg G$ is a representation of the Jacobian algebra $\mathcal{P}(\dmg G, S)$.
\end{proposition}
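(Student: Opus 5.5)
We have to show two things: $\M(\xi)$ is a module over the complete path algebra, and it is annihilated by the cyclic derivatives $\partial_\alpha(S)$ for every arrow $\alpha\in(\dmg G)_1$. The first is Lemma~\ref{Lem:Nilpotent}: $\M(\xi)$ is nilpotent, so it is a module over $R\langle\langle A\rangle\rangle$. Moreover, since $\M(\xi)$ is nilpotent, any infinite sum of paths acts through finitely many terms. Hence it suffices to check the relations $\M(\xi)_{\partial_\alpha(S)}=0$.

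\textbf{Structure of $\partial_\alpha(S)$.} Each arrow $\alpha=\dmg(a)$ lies on exactly two cycles of the relevant kind: the clockwise cycle $\dmg G(v)$ of the vertex $v$ of the angle $a$, and the counterclockwise cycle $\dmg G(f)$ of the face $f$ of $a$. Write $S=S_\omega+P$ with $P$ phantom. The cyclic derivative of a power $\frac1p C^p$ of a cycle through $\alpha$ (passing through $\alpha$ once) is $u_C\,(C')^{p-1}$, where $u_C$ is the path from $t(\alpha)$ around $C$ back to $s(\alpha)$ and $C'$ is the rotation of $C$ starting at $t(\alpha)$. For phantom terms, which are polynomials in cycles $\dmg G(x)$ with $\omega(x)=0$, each term of $\partial_\alpha P$ is a path containing a full traversal of some complete cycle $\dmg G(x)$ with $\omega(x)=0$, possibly minus the arrow $\alpha$ itself. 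I must handle this carefully. Each monomial of $P$ is a product of cycles, at least one of which is not the one containing the deleted $\alpha$, or else is a power or product of degree $\ge 2$. The degenerate case is a single $\dmg G(x)$ through $\alpha$, whose derivative is just $u_{\dmg G(x)}$.

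\textbf{Key computation.} For a path $p$ the map $\M(\xi)_p$ is a composition of matrices $(+)^{f_+}(-)^{f_-}$. By the shuffle lemma it equals $(+)^{F_+(p)}(-)^{F_-(p)}$, where $F_\pm(p)=\sum_{\beta\in p}f_\pm(\beta)$, or zero if some intermediate rank vanishes. Along a full cycle through $x$ we get $F_+=F_-=\omega(x)$, by~\eqref{eq.omega-and-g}. Consider the vertex term $u_v(\dmg G(v)')^{p_\omega(v)-1}$. It has $F_+=\xi_\omega-f_+(\alpha)$ and $F_-=\xi_\omega-f_-(\alpha)$. The face term $u_f(\dmg G(f)')^{p_\omega(f)-1}$ has exactly the same $F_\pm$, since $\omega(x)p_\omega(x)=\xi_\omega$. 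So the two terms are represented by the same matrix $(+)^{\xi_\omega-f_+(\alpha)}(-)^{\xi_\omega-f_-(\alpha)}$, and they cancel in $\partial_\alpha S_\omega$. The point to verify is the "otherwise zero" clause: a product is zero exactly when its total rank $d(s)-F_-$ is negative, provided each factor is interpreted via the explicit block formula. I would prove by induction on the length of the path that the composite of the block matrices equals the block matrix for the summed exponents, or zero when the rank $d(s(p))-F_-(p)$ ever drops below zero. The rank is determined only by the endpoint dimensions and by $F_\pm$. A subtle point is that intermediate ranks could go negative even when the final rank is nonnegative. I expect this to be the main obstacle, and I would need a monotonicity argument showing that the rank along the path is $\min$ over prefixes. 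Equivalently, $\M(\xi)_p$ is given by the matrix $(+)^{c}(-)^{k}$ with effective rank $\min_j(\text{running rank})$. If the two paths could have different minima, cancellation fails. Here I would use the BMS relation to rewrite the running rank as $d(e_j)-(\text{remaining }F_+)$, in order to compare the two sides. Alternatively, I would use nilpotency: along a full cycle $\dmg G(x)$ with $\omega(x)=0$ the map is $J^0=\mathrm{id}$ on a space of dimension $d=0$, since BMS states have $d_{\mathrm{inv}}=0$.

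\textbf{Phantom terms.} If a term of $\partial_\alpha P$ contains a complete cycle $\dmg G(x)$ with $\omega(x)=0$, then $f_\pm$ vanish on it. That cycle is an invisible angular cycle, so $d=0$ on its edges, and the map factors through a zero space. The degenerate term $u_{\dmg G(x)}$ also passes through vertices of that invisible cycle, unless the cycle has only the vertices $s(\alpha),t(\alpha)$. Even then, $d(t(\alpha))=0$. So all phantom contributions vanish, and $\M(\xi)$ satisfies every Jacobian relation.
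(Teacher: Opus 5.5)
Your proposal follows the paper's proof: nilpotency comes from Lemma~\ref{Lem:Nilpotent}, and the phantom part is killed because $d$ vanishes on the invisible cycles $\dmg G(x)$ with $\omega(x)=0$. The same observation, $d(t(a))=0$, also disposes of $\partial_a S_\omega$ when only one of $v_a,f_a$ lies in $\mathrm{supp}(\omega)$, a case the paper likewise leaves implicit. The vertex and face terms of $\partial_a S_\omega$ both act as $(+)^{\xi_\omega-f_+(a)}(-)^{\xi_\omega-f_-(a)}$ and cancel.

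The step you flag as the main obstacle, which the paper passes over with the shuffling identity, resolves in one line. $\M(\xi)_\alpha$ sends the standard basis vector $\varepsilon_j$ to $\varepsilon_{j-f_-(\alpha)}$ if $j>f_-(\alpha)$ and to $0$ otherwise; this uniform description also covers the ``otherwise zero'' clause. Since $f_-\ge 0$, the index only decreases along a path, so $\M(\xi)_p$ is just the shift by $F_-(p)$ from $\C^{d(s(p))}$ to $\C^{d(t(p))}$. Hence your worry that intermediate ranks can be negative while the final one is not is unfounded, and the two parallel paths, having equal $F_-$, give the same map.
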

\begin{proof}In view of Lemma~\ref{Lem:Nilpotent} we only need to show that for every angle $a\in A_G$ we have $(\partial_a S)\M(\xi)=0$. The angle $a$ belongs exactly to one vertex and one face in $G$ that we denote by $v_a$ and $f_a$, respectively. Since $S=S_\omega+S'$ is the sum of the potential $S_\omega$ and a phantom potential $S'$, by linearity, we get $(\partial_a S)\M(\xi)=(\partial_a S_\omega) \M(\xi)+(\partial_a S')\M(\xi)$. Since $S'$ is  a phantom potential and $(f_+,f_-,d)$ is a BMS state, $f_+(\alpha)=f_-(\alpha)=0$ for every arrow $\alpha$ which lies in a phantom potential. It follows that $(\partial_a S')\M(\xi)=0$ and hence

\begin{align*}
         \partial_{a}S&=\partial_{a}S_\omega=
         \frac{1}{p_\omega(v_a)}\partial_{a}(\dmg G(v_a))^{p_\omega(v_a)}-\frac{1}{p_\omega(f_a)}\partial_{a}(\dmg G(f_a))^{p_\omega(f_a)}\\ \\
         &=
         (\dmg G(v_a))^{p_{\omega}(v_a)-1}\partial_{a}\dmg G(v_a)-(\dmg G(f_a))^{p_{\omega}(f_a)-1}\partial_{a}\dmg G(f_a).
\end{align*}
     Since $\M(\xi)_{\dmg G(v)}=(+)^{\omega(v)}(-)^{\omega(v)}$ and $\M(\xi)_{\dmg G(f)}=(+)^{\omega(f)}(-)^{\omega(f)}$, we have
     \begin{align*}
     \M(\xi)_{\partial_{a}S}&=(+)^{(p_\omega(v_a)-1)\omega(v_a)+\omega(v_a)-f_+(a)}
     (-)^{(p_\omega(v_a)-1)\omega(v_a)+\omega(v_a)-f_-(a)}\\
     &\qquad -
     (+)^{(p_\omega(f_a)-1)\omega(f_a)+\omega(f_a)-f_+(a)}
     (-)^{(p_\omega(f_a)-1)\omega(f_a)+\omega(f_a)-f_-(a)}\\
     &=
     (+)^{p_\omega(v_a)\omega(v_a)-f_+(a)}
     (-)^{p_\omega(v_a)\omega(v_a)-f_-(a)}\\
     &\qquad -
     (+)^{p_\omega(f_a)\omega(f_a)-f_+(a)}
     (-)^{p_\omega(f_a)\omega(f_a)-f_-(a)}\\
     &=(+)^{\xi_\omega-f_+(a)}(-)^{\xi_\omega-f_-(a)}-(+)^{\xi_\omega-f_+(a)}(-)^{\xi_\omega-f_-(a)}=0.
     \end{align*}
\end{proof}

\begin{theorem}\label{thm.indecomposability}
Let $\omega$ be a characteristic weight. If $\xi=(f_+,f_-,d)$ is a non-trivial BMS $\omega$-state, then $\M(\xi)$ is indecomposable if and only if its support is connected.
\end{theorem}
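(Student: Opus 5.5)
The direction ``indecomposable $\Rightarrow$ connected support'' is formal. If the support of $\M(\xi)$ splits as a disjoint union $U_1\sqcup U_2$ of nonempty sets with no arrow of $\dmg G$ joining them inside the support, then the maps $M_\alpha$ vanish whenever $s(\alpha)$ and $t(\alpha)$ lie in different pieces, since one end has zero space. Hence $\M(\xi)=\M(\xi)|_{U_1}\oplus\M(\xi)|_{U_2}$ is a nontrivial decomposition. So all the work is in the converse, and I would prove it by showing that $\mathrm{End}(\M(\xi))$ is local.

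For the converse I would use the characteristic hypothesis strongly. Since $\omega\in\{0,1\}$, every value $f_\pm(\alpha)$ is $0$ or $1$. Each $M_\alpha$ is therefore one of four maps: the identity-shaped matrix ($f_+=f_-=0$, and then $d(s)=d(t)$), $(+)$, $(-)$, or $(+)(-)=J$ with $d(s)=d(t)$. The last case uses the convention of Definition~\ref{Def:BMSModule}. In every case, identifying $M_e=\C^{d(e)}$ with the standard basis $b^e_1,\dots,b^e_{d(e)}$, the map $M_\alpha$ sends each basis vector to a basis vector or to $0$. The rule is an index shift $b_i\mapsto b_{i-f_-(\alpha)}$, and $b_i\mapsto 0$ if $i\le f_-(\alpha)$. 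So $\M(\xi)$ is a ``tree-like'' or string-type module described by a coefficient quiver $\Gamma$: its vertices are the pairs $(e,i)$, and it has an arrow $(s(\alpha),i)\to(t(\alpha),i-f_-(\alpha))$ whenever the image is nonzero.

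The first key step is to show that $\Gamma$ is connected when the support is connected. Within one vertex $e$, the levels are linked through the cycles $\dmg G(v)$ and $\dmg G(f)$ with $\omega=1$. Going around such a cycle acts as $J$, by the computation in Proposition~\ref{prop.M-rep-of-QG}. So following a cycle around, starting at $(e,i)$, reaches $(e,i-1)$ as long as the intermediate vectors do not die. I would pick the cycle through $e$ whose $f_-$-value $1$ occurs at the last step, which gives a path $(e,i)\to(e,i-1)$ staying inside $\Gamma$. Across an arrow $\alpha$ between two supported vertices, at least one level is matched: $r_\alpha\ge 1$, unless $d(s)=f_-$ or $d(t)=f_+$, and in those cases some other arrow or cycle must be used. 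This is where I expect fiddly casework.

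The second step, which I expect to be the main obstacle, is computing endomorphisms. Take $\phi\in\mathrm{End}(\M(\xi))$ and try to prove $\phi=\lambda\,\mathrm{id}+N$ with $N$ nilpotent. My first attempt is this: at each vertex $e$, $\phi_e$ commutes with the cycle maps $J$, so $\phi_e$ is an upper triangular Toeplitz matrix, i.e.\ a polynomial in $J$ with constant term $\lambda_e$. Commutation with an arrow map between supported neighbours, with rank $r_\alpha\ge1$, forces $\lambda_{s(\alpha)}=\lambda_{t(\alpha)}$, because the top surviving basis vector is matched. Connectedness of the support then makes $\lambda_e=\lambda$ constant. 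Then $\phi-\lambda\,\mathrm{id}$ is strictly upper triangular at every vertex, hence nilpotent, so $\mathrm{End}$ is local and $\M(\xi)$ is indecomposable. The delicate point is again the existence, at each vertex with $d(e)\ge1$, of a cycle through $e$ acting as a full Jordan block $J_{d(e)}$ on $M_e$, rather than as $0$. I would prove this using the BMS relation along the cycle, together with the fact that the cycle has $\omega=1$ and passes through supported vertices. If that fails, I would fall back on the coefficient-quiver argument and show that $\Gamma$ is a tree, where connectedness of $\Gamma$ suffices for indecomposability of tree modules.
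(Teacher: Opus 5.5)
Your converse argument is the paper's proof: by Fitting's lemma you show $\mathrm{End}(\M(\xi))$ is local, each $F_e$ commutes with $J_{d(e)}$ and is therefore upper triangular Toeplitz, and the diagonal entry is propagated along arrows of nonzero rank using connectedness of the support, so the coefficient-quiver fallback is not needed.

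The two points you leave open close quickly. First, every cycle $\dmg G(x)$ through a supported $e$ acts on $M_e$ as $(+)^{\omega(x)}(-)^{\omega(x)}$, and this is $J_{d(e)}$, because $\omega(x)=0$ would make $\dmg G(x)$ invisible and force $d(e)=0$. Second, a rank-zero arrow $\alpha$ between supported vertices (a case the paper passes over) forces $d(s(\alpha))=d(t(\alpha))=1$ and $f_+(\alpha)=f_-(\alpha)=1$; the other arrows of the vertex cycle containing $\alpha$ then all act as $\id_1$ and still join $t(\alpha)$ back to $s(\alpha)$.
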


\begin{proof}
Clearly, if the support of $\M(\xi)$ is not connected then $\M(\xi)$ is decomposable. Therefore, we suppose that the support of $\M(\xi)$ is connected. By Fitting’s Lemma, it is necessary and sufficient (since $\mathbb C$ is algebraically closed) to prove that the endomorphism ring of $\M(\xi)$ is a local ring (see e.g. \cite[Corollary~I.4.8]{ASS} or \cite[Section~2]{CB}). Let $F = (F_e)_{e \in E_G} \in \operatorname{End}(\M(\xi))$. For a vertex $e\in \dmg G_0$ let $v(e)$ and $f(e)$ denote the vertex and the face of $G$ adjacent to $e$. Since $\omega$ is a characteristic weight, $\omega(v(e)), \omega(f(e))\in\{0,1\}$ and hence $\M(\xi)_{\dmg G(v(e))}, \M(\xi)_{\dmg G(f(e))}\in \{\id, J_{d(e)}\}$. Since $(f_+,f_-,d)$ is BMS, if either $\M(\xi)_{\dmg G(v(e))}$ or $\M(\xi)_{\dmg G(f(e))}$ is the identity matrix, then $d(e)=0$ and hence $F_e=0$. Let us suppose that $d(e)>0$. Since $F$ is an endomorphism of the quiver representation $\M(\xi)$, the matrix $F_e : \mathbb C^{d(e)} \to \mathbb C^{d(e)}$ must commute with the nilpotent Jordan block $J_{d(e)}$. It is well-known and easy to prove (see e.g.\ \cite[Lemma 3]{CE}) that the only matrices $A$ such that $A J_{d(e)} = J_{d(e)} A$ are the $d(e)\times d(e)$ upper triangular Toeplitz matrices. Recall that given integers $m, n \geq 1$ and $n$ scalars $a_1, \dots, a_n \in k$, the (upper triangular) Toeplitz matrix $\operatorname{Toep}(a_1, \dots, a_n)$ is the $m \times n$ matrix $T = (T_{i,j})$ given by
\[
T_{i,j} =
\begin{cases}
a_{j-i+1} & \text{if } i \leq j \\
0 & \text{if } i > j
\end{cases}
\]

Thus, for every vertex $e\in\dmg G_0$ such that $d(e)>0$ there exist $d(e)$ numbers $t_1(e), \dots, t_{d(e)}(e) \in \mathbb C$ such that $F_e = \operatorname{Toep}(t_1(e), \dots, t_{d(e)}(e))$. We need to prove that $t_1(e) = t_1(e')$ for all $e, e' \in \dmg G_0$ such that $d(e)>0$ and $d(e')>0$. Since the support of $\M(\xi)$ is connected, it suffices to prove the claim only when the two vertices $e,e'$ are joined by an arrow $\alpha: e\rightarrow e'$ in $\dmg G$. The following formulas are straightforward:
\begin{align*}
\operatorname{Toep}(a_1, \dots, a_n)(+) &=
\begin{pmatrix}
\operatorname{Toep}(a_1, \dots, a_{n-1}) \\
\mathbb{0}_{1\times (n-1)}
\end{pmatrix} \\
(+) \operatorname{Toep}(a_1, \dots, a_n) &=
\begin{pmatrix}
\operatorname{Toep}(a_1, \dots, a_n) \\
\mathbb{0}_{1\times n}
\end{pmatrix} \\
\operatorname{Toep}(a_1, \dots, a_n) (-) &=
\begin{pmatrix}
\mathbb{0}_{n\times 1} & \operatorname{Toep}(a_1, \dots, a_n)
\end{pmatrix} \\
(-)\operatorname{Toep}(a_1, \dots, a_n) &=
\begin{pmatrix}
\mathbb{0}_{(n-1)\times 1} & \operatorname{Toep}(a_1, \dots, a_{n-1})
\end{pmatrix}
\end{align*} Since by definition $F_{e'} \circ \M(\xi)_{\alpha} = \M(\xi)_{\alpha} \circ F_e$, the above formulas imply $t_1(e) = t_1(e')$.\\
\end{proof}

\subsection{Isomorphism of lattices}\label{sec-lattice-isomorphism}
 We can now state the last results of the paper. 
 
\begin{theorem}\label{thm.plus-subobjects-subrepr}
Let $\omega$ be a characteristic weight.
If $\xi\in\BMS$, then the lattice of plus-subobjects of $\xi$ is isomorphic to the lattice of subrepresentations of $\M(\xi)$.

\end{theorem}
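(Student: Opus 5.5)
The plan is to build an explicit order-preserving map from plus-subobjects to subrepresentations and then show it is bijective. Let $\xi=(f_+,f_-,d)$ and let $\xi'=(f'_+,f'_-,d')\le\xi$ be a plus-subobject. Any directed path in $\BMS$ leaves the minus-function unchanged, so $f'_-=f_-$ and $\xi'\in\bmsp$. Fix such a path $\xi'=\xi_0\to\xi_1\to\dots\to\xi_n=\xi$, with each step a move $\mathrm{mov}_{e_i}$. Composing the injections $\iota$ from Lemma~\ref{Lem:ShortExactSequence} along the path gives a monomorphism $\M(\xi')\hookrightarrow\M(\xi)$. At each vertex $e$ this monomorphism is $(+)^{d(e)-d'(e)}$, i.e.\ the inclusion of the first $d'(e)$ coordinates, so it does not depend on the chosen path. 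Define $\Phi(\xi')$ to be its image. By Lemma~\ref{lem.dim-implies-map}, $\xi'\le\xi''$ holds if and only if $d'\le d''$ pointwise, and the images are the coordinate subspaces $\C^{d'(e)}\subseteq\C^{d(e)}$. Hence $\Phi$ is an order embedding, and in particular injective. (Lemma~\ref{lem.dim-implies-map} assumes nilpotency degree zero. For a characteristic weight I would either check this separately, or note that injectivity of $d\mapsto\Phi$ already follows from the fact that dimension vectors within $\bmsp$ are determined by the $f_+$'s up to constants that are pinned down by the BMS condition.)

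The main step is surjectivity: every subrepresentation $N\subseteq\M(\xi)$ equals $\Phi(\xi')$ for some $\xi'$. I would argue by induction on $\dim\M(\xi)-\dim N$. If $N\ne\M(\xi)$, the quotient $\M(\xi)/N$ is nilpotent (Lemma~\ref{Lem:Nilpotent}), so it has a simple top $S_e$. This gives a nonzero map $\M(\xi)\to S_e$ that vanishes on $N$. By Lemma~\ref{Lem:SurjectiveMovable}, $\xi$ is then anti-$e$-movable, and $\xi^{(1)}:=\mathrm{mov}_e^-(\xi)$ is a BMS state whose dimension vector is $d-\chi_e$. This uses the argument in the proof of Proposition~\ref{prop.weak-is-strong}: we need $d(e)\ge1$, which holds because $S_e$ is a quotient.

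The delicate point is to show that $N\subseteq\Phi(\xi^{(1)})$, that is, the kernel of every nonzero map $\M(\xi)\to S_e$ is exactly the standard hyperplane $\C^{d(e)-1}\subseteq\C^{d(e)}$ with the remaining components untouched. This is where the characteristic weight enters. Since $\omega(v(e)),\omega(f(e))\in\{0,1\}$, the cycle maps at $e$ are $J_{d(e)}$, exactly as in the proof of Theorem~\ref{thm.indecomposability}. A map to $S_e$ must kill the images of the two incoming arrows $\alpha_1,\alpha_3$. Each $M_{\alpha_k}=(+)^{f_+(\alpha_k)}(-)^{f_-(\alpha_k)}$ with $f_+(\alpha_k)\le1$, and the BMS relation together with the weight-$1$ constraint forces $f_+(\alpha_k)=1$ and $f_-(\alpha_k)=0$ on at least one of them. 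Its image is then the first $d(e)-1$ coordinates, so the kernel is the standard hyperplane. I expect this local case analysis, including the edge cases where one of the adjacent cycles has weight $0$, to be the main obstacle.

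With $N\subseteq\Phi(\xi^{(1)})\cong\M(\xi^{(1)})$ established, the induction hypothesis applied to $\xi^{(1)}$ gives $N=\Phi(\xi')$ for some $\xi'\le\xi^{(1)}\le\xi$. The base case $N=\M(\xi)$ corresponds to $\xi'=\xi$. Finally, $\Phi$ is a bijective order embedding between two posets, and both sides are lattices: the subrepresentations under $\cap$ and $+$, and the plus-subobjects by Theorem~\ref{thm.bms-lattice}. A bijective order embedding between lattices is a lattice isomorphism, which proves the theorem.
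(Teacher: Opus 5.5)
Your surjectivity half is the paper's argument: a simple quotient $S_e$ of $\M(\xi)/N$, Lemma~\ref{Lem:SurjectiveMovable}, identification of the kernel of $\M(\xi)\to S_e$ with the image of $\M(\mathrm{mov}_e^{-}(\xi))$, and induction. Your explicit map $\Phi$ and its path-independence are a genuine addition, since the paper only proves surjectivity.

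\textbf{The gap is in the order-embedding half.} Lemma~\ref{lem.dim-implies-map}, and Theorem~\ref{thm.bms-lattice} which you invoke at the end, both require nilpotency degree zero. The theorem does not assume this, and a characteristic weight does not force it. Take two vertices joined by two edges $e_1,e_2$ with $\omega\equiv 1$. There are no invisible cycles, and for a compatible $g$ every $(g,g,d\equiv n)$ with $n\ge 0$ lies in $\bmsp[g]$. So your parenthetical claim that the BMS condition pins down $d$ from $f_+$ is false here. It also targets the wrong property: injectivity is trivial, because $\xi'=(f_-+\delta d',f_-,d')$ is determined by $d'$. What you actually need is order reflection: if $\xi',\xi''\le\xi$ and $d'\le d''$, then $\xi'\le\xi''$.

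This holds with no nilpotency assumption. The triple $(f''_+,f'_+,d''-d')$ is a BMS state. The proof of Proposition~\ref{prop.weak-is-strong} then builds a path from $f'_+$ to $f''_+$ by moves along edges $e_1,\dots,e_m$ with $\sum_i\chi_{e_i}=d''-d'$. Lifting this path to $\BMS$ starting from $\xi'$ lands exactly on $\xi''$. Alternatively, apply your own surjectivity statement to $\xi''$ and the subrepresentation $\Phi(\xi')\subseteq\Phi(\xi'')\cong\M(\xi'')$. Either way $\Phi$ becomes an order isomorphism onto the lattice of subrepresentations. The lattice structure on plus-subobjects then comes for free, and Theorem~\ref{thm.bms-lattice} is not needed. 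Under nilpotency degree zero, as in Corollary~\ref{cor.maximal-subrepr}, your argument is already complete as written.

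\textbf{A smaller point in the kernel step.} Your claim that some incoming arrow has $f_-(\alpha_k)=0$ is unjustified and can fail. In the example above, take $g$ equal to $1$ on both arrows into $e_2$ and $\xi=(g,g,d\equiv 1)$, which is anti-$e_2$-movable. The claim is also unnecessary. Anti-$e$-movability and $f_+\le\omega\le 1$ give $f_+(\alpha_1)=f_+(\alpha_3)=1$. Then $(+)(-)^{f_-(\alpha_k)}$ has image the span of the first $d(e)-1$ basis vectors whether $f_-(\alpha_k)$ is $0$ or $1$, and this already forces the kernel at $e$ to be the standard hyperplane. The paper reaches the same conclusion differently: $K_e$ is invariant under the cycle map $J_{d(e)}$, whose only invariant hyperplane is the standard one. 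The two arguments are equivalent in substance.
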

\begin{proof}
By repeatedly applying Lemma~\ref{Lem:ShortExactSequence} we know that for every plus-subobject $\xi'\le \xi$ there is a monomorphism of quiver representations
\[
\xymatrix{\M(\xi')\ar@{^(->}[r]& \M(\xi)}
\]
To show that every subrepresentation of $\M(\xi)$ is of this form, we proceed by induction on the dimension vector $d$. If $d=0$, then $\xi=(f_-,f_-,0)$ and $\M(\xi)$ is the zero representation; in this case the statement is trivially true. Let $d>0$ and suppose that $U\subset \M(\xi)$ is a proper quiver subrepresentation. Then
$U$ is contained in a maximal subrepresentation $U^{\max}$ of $\M(\xi)$. By maximality, $\M(\xi)/U^{\max}$ is simple and hence there exists a vertex $e\in \dmg G_0$ such that $\M(\xi)/U^{\max}\simeq S_e$. By Lemma~\ref{Lem:SurjectiveMovable} $\xi$ is anti-$e$-movable. Let $\xi'=\mathrm{mov}_e^{-}(\xi)$. We claim that $U^{\max}=\iota_{\xi'}(\M(\xi'))\subset \M(\xi)$, where $\iota_{\xi'}$ is defined in Lemma~\ref{Lem:ShortExactSequence}. If the claim holds, then we are done by induction. Let $K$ be the kernel of a non-zero map $\xymatrix{\M(\xi)\ar@{->>}[r]&S_e}$. Then $\dim(K_i)=d(i)$ if $i\neq e$ and $\dim(K_e)=d(e)-1$. Since $\omega$ is a characteristic weight, the vector subspace $K_e\subset \M(\xi)_e$ is fixed by the nilpotent Jordan block $J_{d(e)}$ and hence $K_e$ is the span of the first $d(e)-1$ elements of the standard basis of $\C^{d(e)}$. This shows that there is a unique (up to scalar multiples) non-zero morphism $\xymatrix{\M(\xi)\ar@{->>}[r]&S_e}$ and hence $K=U^{\max}=\iota_{\xi'}(\M(\xi'))$.
\end{proof}

\begin{corollary}\label{cor.maximal-subrepr}
Let $(G,\omega)$ be a decorated planar graph of nilpotency degree zero, and assume that $\omega$ is a characteristic weight.

Let $C$ be a connected component of $\LL(G,\omega)$, let $f_C^-$ and $f_C^+$ be respectively the minimum and the maximum of the finite distributive lattice $C$,
and let $\xi_C=(f_C^+,f_C^-,d_C)$ be the unique BMS state in $\bmsp[f_C^-]$ mapping to $f_C^+$ under the isomorphism of Theorem~\ref{thm.connected.components}. 
Then $C$ is isomorphic, as a lattice, to the lattice of subrepresentations of $\M(\xi_C)$.
\end{corollary}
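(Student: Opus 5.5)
The plan is to compose two lattice isomorphisms already established: the isomorphism of Theorem~\ref{thm.connected.components} between $\bmsp[f_C^-]$ and $C$, and the isomorphism of Theorem~\ref{thm.plus-subobjects-subrepr} between plus-subobjects of $\xi_C$ and subrepresentations of $\M(\xi_C)$. What connects them is an identification of the plus-subobjects of $\xi_C$ with the whole lattice $\bmsp[f_C^-]$.

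First, I would check that $\xi_C$ is well defined. By Theorem~\ref{thm.connected.components}, for $f\in C$ there is an $f_-$ such that $\pi\colon\bmsp[f_-]\to C$ is an isomorphism of directed graphs. Since $(f_-,f_-,\vec 0)$ lies in $\bmsp[f_-]$ and is its minimum by Lemma~\ref{lem.dim-implies-map}, its image $f_-$ is the minimum of $C$, so $f_-=f_C^-$. Surjectivity of $\pi$ then gives the state $\xi_C$, and uniqueness follows from the injectivity shown in Proposition~\ref{prop.bms-finiteness}. Because $\pi$ is an isomorphism of directed graphs, and the orders on both sides are the reachability orders, it is an isomorphism of posets, hence of lattices. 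Consequently $\xi_C$ is the maximum of $\bmsp[f_C^-]$.

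Next, I would identify the plus-subobjects of $\xi_C$ with all of $\bmsp[f_C^-]$. Every move $\mathrm{mov}_e$ preserves $f_-$, so any $\xi'\le\xi_C$ is reached from $\xi'$ by a directed path along which $f_-$ is constant. Hence $\xi'$ has minus-function $f_C^-$, and the set of plus-subobjects is contained in $\bmsp[f_C^-]$. Conversely, every element of $\bmsp[f_C^-]$ is below the maximum $\xi_C$. The order on plus-subobjects is the restriction of the order on $\BMS$, which is the same as the order on the full subgraph $\bmsp[f_C^-]$, so the two lattices coincide. Composing with Theorem~\ref{thm.plus-subobjects-subrepr} for $\xi=\xi_C$ then gives $C\cong\{\xi'\le\xi_C\}\cong$ the lattice of subrepresentations of $\M(\xi_C)$.

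The main point needing care is the interpretation of the bijection in Theorem~\ref{thm.plus-subobjects-subrepr}. I would use it as the explicit order-preserving map $\xi'\mapsto\iota(\M(\xi'))$, where $\iota$ is the composite of the maps $\iota_{\xi'}$ from Lemma~\ref{Lem:ShortExactSequence}. Its inverse should also be order-preserving: inclusion of subrepresentations should correspond to reachability, which can be read off from dimension vectors via Lemma~\ref{lem.dim-implies-map}. Finally, when $\xi_C$ is trivial, both lattices are singletons and the statement is immediate.
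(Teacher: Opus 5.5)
Your proposal is correct and follows the same route as the paper. You identify the plus-subobjects of $\xi_C$ with all of $\bmsp[f_C^-]$, using that $\xi_C$ is its maximum and that moves preserve the minus-function. You then compose the isomorphism $\pi\colon\bmsp[f_C^-]\to C$ of Theorem~\ref{thm.connected.components} with Theorem~\ref{thm.plus-subobjects-subrepr}.

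Your extra checks are sound and make explicit what the paper leaves implicit: that the $f_-$ produced by Theorem~\ref{thm.connected.components} is indeed $f_C^-$, and that inclusion of the subrepresentations $\iota(\M(\xi'))$ matches the order via dimension vectors and Lemma~\ref{lem.dim-implies-map}.
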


\begin{proof}
By Theorem~\ref{thm.connected.components}, the forgetful map induces an isomorphism of distributive lattices
\[
\pi\colon \bmsp[f_C^-]\xrightarrow{\sim} C.
\]
Since $\xi_C$ is the maximum of the lattice $\bmsp[f_C^-]$, the lattice of plus-subobjects of $\xi_C$ coincides with $\bmsp[f_C^-]$. 
Theorem~\ref{thm.plus-subobjects-subrepr} gives an isomorphism between $\bmsp[f_C^-]$ and the lattice of subrepresentations of $\M(\xi_C)$. Composing with $\pi$, we obtain the desired result.
\end{proof}

We can finally state the generalization of \cite[Theorem 6.9]{BM}.

\begin{corollary}
Let $(G,\omega)$ be a decorated planar graph of nilpotency degree zero, assume that $\omega$ is a characteristic weight and that $\Gamma_{(G,\omega)}^{\mathrm{inv}}$ is connected.
Let $f_-$ and $f_+$ be respectively the minimum and the maximum of the distributive lattice $\LL(G,\omega)$, and let $\xi_{\max}=(f_+,f_-,d_{\max})$ be the unique BMS state in $\bmsp[f_-]$ mapping to $f_+$ under the forgetful map. Then $\LL(G,\omega)$ is isomorphic, as a lattice, to the lattice of subrepresentations of $\M(\xi_{\max})$.
\end{corollary}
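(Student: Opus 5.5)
This corollary is the special case of Corollary~\ref{cor.maximal-subrepr} in which $\LL(G,\omega)$ has a single connected component, and the plan is to reduce to it.

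First I invoke Corollary~\ref{cor-distributive-lattice}. Since $(G,\omega)$ is connected, of nilpotency degree zero, and $\Gamma_{(G,\omega)}^{\mathrm{inv}}$ is connected, $\LL(G,\omega)$ is connected and is a finite distributive lattice. Finiteness comes from Remark~\ref{rem.LG.is.finite}, and this lets us speak of its minimum $f_-$ and maximum $f_+$. The same corollary gives that the forgetful map $\pi\colon\bmsp[f_-]\to\LL(G,\omega)$ is an isomorphism of distributive lattices. In particular there is exactly one BMS state $\xi_{\max}=(f_+,f_-,d_{\max})$ over $f_+$, so $\xi_{\max}$ is well defined.

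Next I apply Corollary~\ref{cor.maximal-subrepr} with $C=\LL(G,\omega)$, the unique connected component. Its hypotheses hold: the nilpotency degree is zero and $\omega$ is characteristic. The minimum and maximum of $C$ are $f_C^-=f_-$ and $f_C^+=f_+$, and the state $\xi_C$ is exactly $\xi_{\max}$. The corollary then gives a lattice isomorphism between $\LL(G,\omega)$ and the lattice of subrepresentations of $\M(\xi_{\max})$.

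One could also argue directly. Because $\xi_{\max}$ is the maximum of $\bmsp[f_-]$, its plus-subobjects are exactly the elements of $\bmsp[f_-]$, since $\bmsp[f_-]$ is full and downward-closed under $\le$. Composing the isomorphism of Theorem~\ref{thm.plus-subobjects-subrepr} with $\pi$ then gives the result.

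The only point needing care is the identification of the component $C$ with all of $\LL(G,\omega)$, together with the matching of the two definitions of the maximal state. Both are immediate from the connectedness statement in Theorem~\ref{thm-l-graph-connectedness}, so I expect no real obstacle.
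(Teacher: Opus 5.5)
Your proposal is correct and follows the paper's own argument: the paper also uses Theorem~\ref{thm-l-graph-connectedness} to get that $\LL(G,\omega)$ is its own unique connected component, and then applies Corollary~\ref{cor.maximal-subrepr} with $C=\LL(G,\omega)$. Your ``direct'' alternative simply unpacks the proof of Corollary~\ref{cor.maximal-subrepr}: the plus-subobjects of the maximum $\xi_{\max}$ are exactly the elements of $\bmsp[f_-]$, and you then compose the isomorphism of Theorem~\ref{thm.plus-subobjects-subrepr} with $\pi$, so it is the same route rather than a new one.
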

\begin{proof}
    It follows from Theorem \ref{thm-l-graph-connectedness} and Corollary \ref{cor.maximal-subrepr}.
\end{proof}

\bibliographystyle{alpha}
\bibliography{bibliography}

\end{document}